\renewcommand\sout{\bgroup\markoverwith
 {\textcolor{red}{\rule[0.7ex]{3pt}{1.4pt}}}\ULon}
\definecolor{darkgreen}{cmyk}{1,0,1,.2}
\definecolor{m}{rgb}{1,0.1,1}
\definecolor{green}{cmyk}{1,0,1,0}
\definecolor{darkred}{rgb}{0.55, 0.0, 0.0}
\definecolor{test}{rgb}{1,0,0}
\definecolor{cmyk}{cmyk}{0,1,1,0}
\renewcommand\sout{\bgroup\markoverwith
{\textcolor{red}{\rule[0.7ex]{3pt}{1.4pt}}}\ULon}
\def\Ch{\operatorname{Ch}}
\def\End{\operatorname{End}}
\def\End{\operatorname{End}}
\def\id{\operatorname{id}}
\def\Id{\operatorname{Id}}
\def\Hom{\operatorname{Hom}}
\def\Ind{\operatorname{Ind}}
\def\Tr{\operatorname{Tr}}
\def\C{\mathbb C}
\def\N{\mathbb N}
\def\R{\mathbb R}
\def\Z{\mathbb Z}
\theoremstyle {definition} \newtheorem {definition} {Definition} [section] }
\theoremstyle {definition} \newtheorem {defi} {Definition} [section] }
\theoremstyle {plain}  \newtheorem {thm} [defi] {Theorem}}
\theoremstyle {plain}  \newtheorem {cor} [defi]{Corollary}}
\theoremstyle {plain} \newtheorem {prop} [defi]{Proposition}}
\theoremstyle {plain} \newtheorem {lem}[defi] {Lemma}}
\theoremstyle {definition} \newtheorem {ex}[defi] {Example}}
\theoremstyle {definition} \newtheorem{remarque}[defi]{Remark}}
\theoremstyle {definition} }
\theoremstyle {definition} }
\theoremstyle {definition}  }
\theoremstyle {definition} }
\newtheorem{notation}{Notation}[defi]
\theoremstyle {plain}  \newtheorem* {thm*} {Theorem}}
\def\Hom{{\mathrm{Hom}}}
\def\End{{\mathrm{End}}}
\def\Tr{{\mathrm{Tr}}}
\def\Ch{{\mathrm{Ch}}}
\def\det{{\mathrm{det}}}
\def\exp{{\mathrm{exp}}}
\def\g{{\mathfrak{g}}}
\def\h{{\mathfrak{h}}}
\def\k{{\mathrm{K}}}
\newcommand\maA{\mathcal A}
\newcommand\maE{\mathcal E}
\newcommand\maF{\mathcal F}
\newcommand\maP{\mathcal P}
\newcommand\maK{\mathcal K}
\newcommand\maL{\mathcal L}
\newcommand\maH{\mathcal H}
\newcommand\maU{\mathcal U}
\newcommand\maZ{\mathcal Z}
\title{The index of families of projective operators}
\author[A. Baldare]{Alexandre Baldare}
\email{alexandre.baldare@math.uni-hannover.de}
 \address{Institut für Analysis, Welfengarten 1, 30167 Hannover, Germany}
\urladdr{https://baldare.github.io/Baldare.Alexandre/}
\thanks{{\em Key words:} Index theory, cohomology, 
 	pseudodifferential operators, group actions, projective operators. 
 	{\em AMS Subject classification:} 
	19K56, %Index theory 	
 	%47A53, 58J40,
    19K35, %Kasparov K-theory
    %46L60, 
    %47L80, 46N20.
    57S15, %Compact Lie groups of differentiable transformations
	58J20%Index theory and related fixed-point theorems on manifolds
}
\begin{document}

%
%\title[transversally elliptic families]
%{ The index of $G$-transversally elliptic families in $\k\k$-theory}
%
%%%% AND THE AUTHORS ARE:
%
%\author[A. Baldare]{Alexandre  Baldare}
%%\address{UMR 5149 du CNRS, IMAG, Universit\'{e} Montpellier, France}
%%\email{alexandre.baldare@umontpellier.fr}
%
%\thanks{Mathematical subject classification (1991). ????.\\
%Key words: ????.}
%

%\maketitle

\begin{abstract}
Let $1 \to \Gamma \to \tilde{G} \to G \to 1$ be a central extension by an abelian finite group.
In this paper, we compute the index of families of $\tilde{G}$-transversally elliptic operators on a $G$-principal bundle $P$. 
We then introduce the notion of families of projective operators
on fibrations equipped with an Azumaya bundle $\maA$. 
We define and compute the index of such families using the cohomological index formula 
for families of $SU(N)$-transversally elliptic operators. 
More precisely, a family $A$ of projective operators  can be pulled back in a family $\tilde{A}$ of $SU(N)$-transversally
elliptic operators on the $PU(N)$-principal bundle of trivialisations of $\maA$.
Through the distributional index of  $\tilde{A}$, we can define an index for the family $A$ of projective operators and
using the index formula in equivariant cohomology for families of $SU(N)$-transversally elliptic operators, 
we derive an explicit cohomological index formula in de Rham cohomology.
Once this is done, we define and compute the index of families of projective Dirac operators. 
As a second application of our computation of the index of families of 
$\tilde{G}$-transversally elliptic operators on a $G$-principal bundle $P$, 
we consider the special case of a family of $Spin(2n)$-transversally elliptic Dirac operators over 
the bundle of oriented orthonormal frames of an oriented fibration 
and we relate its distributional index with the index of the corresponding family of projective Dirac operators.
\end{abstract}

\maketitle
\tableofcontents

\medskip

%\newpage

%-------------------------------------------------------------------------------------------------------

\section*{Introduction}

This paper is devoted to an application of the cohomological index theorem shown in \cite{baldare:H} using equivariant cohomology.
In particular, using the main result of \cite{baldare:H}, 
we define a cohomological index for famillies of projective operators 
following \cite{MMS1,MMS2,Paradan:projective}.
Let us recall that in the standard case introduced in \cite{MMS1}, Mathai, Melrose and Singer associated with an elliptic projective operator 
an analytical index and then computed this index by a cohomological formula \emph{à la} Atiyah-Singer \cite{atiyah1963index,Atiyah-Singer:I,Atiyah-Singer:III}.
This setting allows them to introduce a projective Dirac operator $\cancel \partial^+_M$ for any oriented manifold and as expected
they obtained 
$$\operatorname{Ind}_a(\cancel \partial^+_M )=(2i\pi)^{-n}\int_M \hat{A}(TM),$$
see \cite{MMS1}. In \cite{MMS2}, the same authors showed that a projective operator $A$ can be represented by 
a $SU(N)$-transversally elliptic operator $\tilde{A}$ and they showed that the analytical index of the projective operator $A$ can be 
computed as the pairing of the distributional index of $\tilde{A}$ with any smooth function on $SU(N)$ equal to $1$ on a 
neighborhood of $\Id \in SU(N)$. 
Recall that the operator $\tilde{A}$ is obtained by pulling back the operator $A$ to the $PU(N)$-principal bundle 
associated with the Azumaya bundle $\maA \rightarrow M$ considered in the definition of the projective operator $A$, see Section \ref{sec.proj}
and \cite{MMS1,MMS2,MMS3} for more details. Notice that here we have a central extension by an abelian finite group 
\begin{equation}\label{eq.ext.SU(N)}
\xymatrix{1 \ar[r] & \Z_N \ar[r] & SU(N) \ar[r]& PU(N) \ar[r] & 1}.
\end{equation}

In \cite{Paradan:projective}, Paradan considered the general case of a central extension by an abelian finite group $\Gamma$ of a compact group $G$ 
\begin{equation}\label{eq.ext.cent}
\xymatrix{1 \ar[r] & \Gamma \ar[r] & \tilde{G} \ar[r]^\zeta & G \ar[r] & 1,}
\end{equation}
and computed the distributional index of any $\tilde{G}$-transversally elliptic operator acting on a $G$-principal bundle $\maP$.
As shown by Atiyah in \cite{atiyah1974elliptic}, this distributional character is supported in the subset $S$ of $\tilde{G}$ 
of elements $\gamma \in \tilde{G}$ such that $\maP^\gamma \neq \emptyset$. 
Since $\tilde{G}$ acts on $\maP$ through the morphism $\zeta$ it follows that
$S\subset \Gamma$.
This allows Paradan to recover the index formula shown in \cite{MMS1} for projective operators using the Berline-Paradan-Vergne index theorem for 
transversally elliptic operators, see \cite{BV:IndEquiTransversal,paradan2008index}.
In particular, around any point of the support of the distributional index character, the index is given by a Atiyah-Singer formula, 
see \cite[Theorem 4.1]{Paradan:projective}. 
Here we point out that this result is completely similar to the results obtained in \cite{yu1991cyclic} 
and that the result of Paradan \cite{Paradan:projective} generalises the result obtained for projective Dirac operators  in \cite{Yamashita}.

In this paper, we follow Paradan's approach and generalise it to the case of families.
More precisely, we consider a central extension by an abelian finite group as in Equation \eqref{eq.ext.cent} and 
a $G$-principal bundle  $\maP \rightarrow M$ where $M \rightarrow B$ is a fibration of compact manifolds.  
In this context, we compute the index of a family of $\tilde{G}$-transversally elliptic operators along the fibres of $\maP \rightarrow B$.
We obtain the following generalisation of  \cite[Theorem 4.1]{Paradan:projective} to families using the index theorem \emph{à la} Berline-Paradan-Vergne shown in \cite{baldare:H} for families of transversally elliptic operators.

\begin{thm*}
Let $\sigma \in K_{\tilde{G}}(T_G(\maP|B))$, 
we have $\mathrm{Ind}^{\maP|B}_{-\infty}(\sigma)=\sum \limits_{\gamma \in \Gamma} T_\gamma(\sigma)\ast \delta_\gamma$, where
$$T_\gamma(\sigma)=(-2i\pi)^{-\dim M +\dim B}\exp_*\Big(\int_{T(M|B)|B} \Ch_\gamma(\sigma)\wedge\hat{A}(T(M|B))^2\wedge e^\Theta\Big).$$
Here $\Ch_\gamma(\sigma)$ is the twisted Chern character, see Definition \ref{def:Chern:twist} and $e^\Theta$ is the Chern-Weil morphism, see Section \ref{sec.vert.twist.Chern}.
\end{thm*}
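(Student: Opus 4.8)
The plan is to reduce the family statement to the known single-operator case of \cite[Theorem 4.1]{Paradan:projective} combined with the families index theorem of \cite{baldare:H}, using the fact that $\tilde G$ acts on $\maP$ through $\zeta$, so that the support of the distributional index is forced into $\Gamma$. First I would recall that, since $\tilde G$ acts on $\maP|B$ via the quotient map $\zeta \colon \tilde G \to G$, the fixed-point sets $\maP^\gamma$ along the fibres are empty unless $\gamma \in \Gamma$; Atiyah's localisation principle for transversally elliptic operators (in the family version established in \cite{baldare:H}) then shows that the distributional index $\mathrm{Ind}^{\maP|B}_{-\infty}(\sigma)$, viewed as a $\maD'(\tilde G)$-valued section over $B$, is supported on the finite set $\Gamma \subset \tilde G$. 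Because $\Gamma$ is finite (hence discrete), any distribution supported on $\Gamma$ is a finite sum $\sum_{\gamma\in\Gamma} T_\gamma \ast \delta_\gamma$ where each $T_\gamma$ is a distribution supported at the identity of the connected component through $\gamma$; here, since $\Gamma$ is central and the component group is just a translate of the identity component, $T_\gamma$ can in fact be taken to be an honest de Rham form on $B$ (a germ at $\gamma$ behaves like a germ at $\Id$). This is exactly the structure asserted in the theorem, so the content is the identification of $T_\gamma(\sigma)$.

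Next I would compute $T_\gamma(\sigma)$ by localising the families index formula of \cite{baldare:H} near the point $\gamma \in \Gamma$. The Berline--Paradan--Vergne-type formula from \cite{baldare:H} expresses the distributional index of a family of $\tilde G$-transversally elliptic operators, evaluated against a test function supported near $\gamma$, as a de Rham class on $B$ obtained by fibre integration over $T(M|B)$ of the (twisted, $\gamma$-equivariant) Chern character of the symbol $\sigma$, the square of the $\hat A$-form of the vertical tangent bundle $T(M|B)$, and the Chern--Weil exponential $e^\Theta$ of the relevant connection/curvature data. Since near $\gamma$ the group looks like a neighbourhood of $\Id$ (translation by $\gamma$), the delocalised equivariant Chern character $\Ch(\sigma)(\gamma \exp X)$ restricts, at $X=0$, to the twisted character $\Ch_\gamma(\sigma)$ of Definition \ref{def:Chern:twist}. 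Collecting the normalisation constant $(-2i\pi)^{-\dim M + \dim B}$ coming from the Fourier/exponential conventions on $T(M|B)$ (of fibre dimension $\dim M - \dim B$) and pushing forward along $\exp$, one obtains precisely
$$T_\gamma(\sigma)=(-2i\pi)^{-\dim M +\dim B}\exp_*\Big(\int_{T(M|B)|B} \Ch_\gamma(\sigma)\wedge\hat{A}(T(M|B))^2\wedge e^\Theta\Big).$$

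The main obstacle I anticipate is the bookkeeping of the several geometric inputs in the families setting: one must verify that the $\hat A(T(M|B))^2$ factor (the square reflecting the passage through the isomorphism $K_{\tilde G}(T_G(\maP|B)) \to K_{\tilde G}(T(M|B))$ via the Thom isomorphism along $\maP \to M$, as in the Atiyah--Singer excision argument) and the Chern--Weil twist $e^\Theta$ combine correctly with the $\gamma$-equivariant localisation, and that all connection-dependent choices cancel so that the resulting de Rham class is well defined. Concretely, I would first establish the formula for $\gamma = \Id$ (where it is the families analogue of \cite{baldare:H} applied to $\maP|B$ directly, combined with the pushforward through the $G$-bundle), then bootstrap to general $\gamma \in \Gamma$ by a translation argument in $\tilde G$ using that $\Gamma$ is central — the centrality ensures that conjugation by $\gamma$ acts trivially, so the $\gamma$-twisted data is obtained from the untwisted data simply by evaluating holonomies/characters at $\gamma$, which is what produces $\Ch_\gamma$ in place of $\Ch$. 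The remaining verification — that the fibre integral over $T(M|B)$ converges as a current and defines the claimed form on $B$ — follows from the properness built into the families index theorem of \cite{baldare:H}, so no new analytic input is needed.
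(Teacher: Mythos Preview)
Your overall architecture is right — localisation forces the support into $\Gamma$, and then one applies the Berline--Paradan--Vergne families formula from \cite{baldare:H} near each $\gamma$. But there is a genuine gap in the middle step, and it is exactly where the computational content of the theorem lives.

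The families index formula of \cite{baldare:H} (Theorem~\ref{thm:BV:familles}) applied to the $\tilde G$-fibration $\maP\to B$ gives the restriction $\mathrm{Ind}^{\maP|B}_{-\infty}(\sigma)\|_\gamma$ as a fibre integral over $T(\maP^\gamma|B)=T(\maP|B)$, \emph{not} over $T(M|B)$. You have skipped the reduction from the $\maP$-level to the $M$-level, and this is precisely where the factor $e^\Theta$ appears: it does not come for free as ``connection/curvature data'' in the index formula. The paper's proof uses the connection $\theta$ to split $T(\maP|B)=T_G(\maP|B)\oplus(\maP\times\g)$, factorises the $\operatorname{One}$-form as $r^*\operatorname{One}(\omega_\maP)=r^*(d\pi)^*\operatorname{One}(\omega_M)\wedge r^*\pi_2^*\operatorname{One}(-\nu)$ (Proposition~\ref{cor:decomp:Par}), observes that the first factor is $1$ since $G$ acts trivially on $M$, and then integrates $\operatorname{One}(-\nu)$ over the $\g$-fibre using \cite[Lemma~4.5]{paradan2008index} to produce $(2i\pi)^{\dim G}e^\Theta\ast\delta_0\,\theta_r\cdots\theta_1$. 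Simultaneously, $\hat{A}(T(\maP|B))^2(X)$ splits as $(d\pi)^*\hat{A}(T(M|B))^2\cdot j_\g(X)^{-1}$; the Jacobian factor $j_\g(X)^{-1}$ is absorbed by the Harish-Chandra descent (Theorem~\ref{thm:descente:Harish:DufloVergne}), and the volume form $\theta_r\cdots\theta_1$ performs the fibre integration $\maP\to M$. None of this is a Thom-isomorphism or excision argument — your parenthetical explanation of $\hat{A}^2$ is off: the square is already present in the cohomological formula on $T(\maP|B)$ and simply descends.

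A secondary point: your claim that ``$T_\gamma$ can in fact be taken to be an honest de Rham form on $B$'' is not correct. Each $T_\gamma(\sigma)$ lies in $\maZ(\g)\otimes\maH_{dR}(B)$, i.e.\ it is a distribution supported at the identity (Proposition~\ref{prop:PBW}), and this is why the formula involves $\exp_*$ and $e^\Theta$ rather than a single cohomology class. Only after pairing with a test function equal to $1$ near $\gamma$ (Corollary~\ref{cor:Paradan:famille}) does one obtain a de Rham class on $B$. Finally, there is no need to treat $\gamma=\Id$ first and then translate: since every $\gamma\in\Gamma$ acts trivially on $\maP$, the fixed-point data $\maP^\gamma=\maP$, $N^\gamma=\maP\times\{0\}$, $D_\gamma=1$ are identical for all $\gamma$, and the computation is uniform.
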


We then introduce the notion of families of projective operators by considering the special case 
given by the extension of Equation \eqref{eq.ext.SU(N)}. Following \cite{MMS2}, we define the analytical index of such families using
the corresponding pairing with a smooth function on $SU(N)$ equal to $1$ around $\Id \in SU(N)$ with 
the distributional index defined in \cite{baldare:H}, see also Equation \eqref{def:ind:distrib}.
Once this is done, we show using the previous theorem that the index of a projective family can be computed 
with a cohomological formula \emph{à la} Atiyah-Singer with values in the de Rham cohomology of the base $B$.

%We believe that it should be possible to generalise the setting of this paper to foliation 
%when the cohomological index formula for longitudinal $G$-transversally 
%elliptic operators in the sense of \cite{Benameur.Baldare} will be available.
%In this case, we need to work 
 
The paper is divided as follows. We start by recalling standard results about functions and distributions
on compact Lie groups. We then recall briefly the definitions of the equivariant cohomologies used in our computations.
In Section \ref{sec.cohomological.index}, we recall the materials from \cite{baldare:KK,baldare:H} regarding the index of families
of transversally elliptic operators, see also \cite{Benameur.Baldare}. In Section \ref{sec.gen.Paradan}, we prove the main result of 
this paper. In Section \ref{sec.proj} we introduce the notion of families of projective operators and show the corresponding
cohomological index formula. We then investigate the example of families of projective Dirac operators 
on Riemannian fibration of oriented, even dimensional manifolds. 
Finally, in the last section we consider the central extension $1 \rightarrow \mathbb{Z}_2 \rightarrow Spin(2n) \rightarrow SO(2n) \rightarrow 1$ and we assume that our compact fibration $p : M\rightarrow B$ is a Riemannian fibration of oriented, even dimensional manifolds.
Under this assumptions, we can define a family $\cancel{\partial}_{\maF|B}^+$ of $Spin(2n)$-transversally elliptic Dirac operators over the total space $\maF$ of the bundle of oriented orthonormal frames, seen as a fibration $p\circ q : \maF\rightarrow B$
and we show the following result.  

%
%\begin{thm*}
%The index of a family of projective Dirac operators is given by 
%$$\mathrm{Ind}^{M|B}_a(\cancel{\partial}_{M|B}^+)=(2\pi i)^{-n}\int_{M|B} \hat{A}(T(M|B)) \in \maH^{ev}_{dR}(B).$$
%\end{thm*}

\begin{thm*}
The distibutional index $\mathrm{Ind}^{\maF|B}_{-\infty}(\cancel{\partial}_{\maF|B}^+)$ of the family $\cancel{\partial}_{\maF|B}^+$ is given by
$$\mathrm{Ind}^{\maF|B}_{-\infty}(\cancel{\partial}_{\maF|B}^+)=T_{M|B}\ast \delta_{\Id} - T_{M|B} \ast \delta_{-\Id},$$
where $T_{M|B}=(2\pi i )^{-n} \exp_* \int_{M|B} \hat{A}(T(M|B)) \wedge e^{\Theta}$ and $\theta$ is the curvature of the $SO(2n)$-principal bundle
$\maF \rightarrow M$.
In particular, if $\varphi \in C^\infty(Spin(2n))^{Ad(Spin(2n))}$ is a function equal to $1$ around $\Id$ and $0$ around $-\Id$ then 
%The index of the family $\cancel{\partial}_{M|B}^+$ of Dirac operators is given by 
$$\langle \mathrm{Ind}^{\maF|B}_a(\cancel{\partial}_{\maF|B}^+), \varphi \rangle =(2\pi i)^{-n}\int_{M|B} \hat{A}(T(M|B)) \in \maH^{ev}_{dR}(B).$$
\end{thm*}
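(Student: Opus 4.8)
The plan is to apply the first displayed theorem (the families generalisation of Paradan's index formula) to the specific symbol $\sigma$ attached to the family $\cancel{\partial}^+_{\maF|B}$ of $Spin(2n)$-transversally elliptic Dirac operators over the bundle $\maF \to M$ of oriented orthonormal frames, and then specialise the group-theoretic data. The central extension here is $1 \to \Z_2 \to Spin(2n) \to SO(2n) \to 1$, so in the notation of the main theorem we have $\Gamma = \Z_2 = \{\Id, -\Id\} \subset Spin(2n)$, and $\maP = \maF$ is the $SO(2n)$-principal bundle of oriented orthonormal frames, with $\maF \to B$ the composite fibration $p \circ q$. The main theorem then gives $\mathrm{Ind}^{\maF|B}_{-\infty}(\cancel{\partial}^+_{\maF|B}) = T_{\Id}(\sigma)\ast\delta_{\Id} + T_{-\Id}(\sigma)\ast\delta_{-\Id}$, and the whole content is to evaluate $T_{\Id}$ and $T_{-\Id}$.

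First I would identify the principal symbol. Since $\maF \to M$ is the $SO(2n)$-frame bundle and we are dealing with the vertical Dirac operator pulled up from the base fibration $M \to B$, the relevant transversal tangent bundle $T_{SO(2n)}(\maF|B)$ along the fibres of $\maF \to B$, quotiented by the $SO(2n)$-action, recovers $T(M|B)$; the symbol $\sigma$ is the pull-back of the fibrewise Dirac symbol on $T(M|B)$, which over $M$ is the standard Bott-type symbol whose $SO(2n)$-equivariant Chern character is the usual one. Next I would compute the twisted Chern characters $\Ch_{\Id}(\sigma)$ and $\Ch_{-\Id}(\sigma)$. At $\gamma = \Id$ the twisted Chern character reduces to the ordinary Chern character, and plugging into $T_\gamma(\sigma)$ with $\dim M - \dim B = 2n$, the factor $(-2i\pi)^{-2n}$ together with $\Ch(\sigma)\wedge\hat A(T(M|B))^2$ integrated over the fibres $T(M|B)|B$ collapses, via the standard Dirac computation $\int_{T(M|B)|B}\Ch(\sigma)\wedge\hat A(T(M|B)) = \hat A(T(M|B))$ (Thom isomorphism / Mathai–Quillen), to $(2\pi i)^{-n}\hat A(T(M|B))\wedge e^\Theta$, giving exactly $T_{M|B}$. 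At $\gamma = -\Id$, the element $-\Id \in Spin(2n)$ acts trivially on $\maF$ but acts as $-1$ on the spinor bundle, i.e. it exchanges the half-spinor bundles $S^+ \leftrightarrow S^-$; this flips the sign of the symbol class and hence $\Ch_{-\Id}(\sigma) = -\Ch_{\Id}(\sigma)$ (there is no further localisation contribution because $\maF^{-\Id} = \maF$, the whole space). Therefore $T_{-\Id}(\sigma) = -T_{M|B}$, yielding the claimed formula $\mathrm{Ind}^{\maF|B}_{-\infty}(\cancel{\partial}^+_{\maF|B}) = T_{M|B}\ast\delta_{\Id} - T_{M|B}\ast\delta_{-\Id}$.

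For the final pairing, I take $\varphi \in C^\infty(Spin(2n))^{Ad(Spin(2n))}$ with $\varphi \equiv 1$ near $\Id$ and $\varphi \equiv 0$ near $-\Id$. Pairing the distribution with $\varphi$ kills the $\delta_{-\Id}$ term and, since $T_{M|B}\ast\delta_{\Id}$ is supported at $\Id$ where $\varphi = 1$, leaves $\langle T_{M|B}\ast\delta_{\Id},\varphi\rangle$; unwinding the convolution (the Chern–Weil form $e^\Theta$ evaluated at $\Id$ contributes trivially to the constant term picked out by $\varphi$, and the $\exp_*$ pushforward followed by evaluation at $\Id$ extracts precisely the component supported there) gives $(2\pi i)^{-n}\int_{M|B}\hat A(T(M|B)) \in \maH^{ev}_{dR}(B)$.

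The main obstacle I expect is the careful bookkeeping at $\gamma = -\Id$: one must verify that $-\Id$ acts trivially on the base $\maF$ (so the fixed-point set is everything and no genuine localisation to a submanifold occurs) while acting by $-1$ on the vertical spinor representation, and that this is exactly what produces the overall minus sign in $\Ch_{-\Id}(\sigma)$ relative to $\Ch_{\Id}(\sigma)$, with the $\hat A$-factor and the Chern–Weil term $e^\Theta$ (depending on the $SO(2n)$-connection with curvature $\Theta$ on $\maF \to M$) being identical in both terms. A secondary point requiring care is tracking the powers of $(-2i\pi)$ versus $(2\pi i)$ and the square $\hat A(T(M|B))^2$ in the main theorem collapsing to a single $\hat A$ after the fibre integration over $T(M|B)|B$, i.e. matching the normalisation conventions of \cite{baldare:H} against the classical Atiyah–Singer normalisation.
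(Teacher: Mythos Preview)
Your approach is essentially the paper's own: apply the main theorem to the central extension $1\to\Z_2\to Spin(2n)\to SO(2n)\to 1$ with $P=\maF$, reduce to computing $\Ch_{\Id}(\sigma)$ and $\Ch_{-\Id}(\sigma)$, identify $\Ch_{\Id}(\sigma)=(2i\pi)^n\hat A(T(M|B))^{-1}\wedge\operatorname{Thom}(T(M|B))$ via the spin structure on $T_{SO(2n)}(\maF|B)$ (the paper invokes \cite[Proposition~7.43]{BGV} and checks that the moment map for the pull-back connection vanishes), and use $\Ch_{-\Id}(\sigma)=-\Ch_{\Id}(\sigma)$ for the second term; the pairing statement then follows from Corollary~\ref{cor:Paradan:famille}.

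One correction: your justification of the sign at $\gamma=-\Id$ is wrong. The element $-\Id\in Spin(2n)$ does \emph{not} exchange $S^+$ and $S^-$; it is central, hence commutes with the chirality operator $\omega_\C$, and acts by the scalar $-1$ on each of $S^+$ and $S^-$ separately. The twisted Chern character is $\Ch_\gamma(\sigma)=\Tr(\gamma^{S^+}e^{F^+})-\Tr(\gamma^{S^-}e^{F^-})$, so with $\gamma=-\Id$ acting by $-1$ on both summands one gets $\Ch_{-\Id}(\sigma)=-\Ch_{\Id}(S^+)+\Ch_{\Id}(S^-)=-\Ch_{\Id}(\sigma)$. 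Your conclusion is correct, but the mechanism is multiplication by $-1$ on the spinors, not a swap of half-spinor bundles.
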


As an interesting corollary, we get that the index of the 
corresponding family $\cancel{\partial}_{M|B}^+$ of projective Dirac operators is computed by the last equality of the previous Theorem. 

Our first motivation in this paper is to apply the index theory for families 
of transversally elliptic operators developed in \cite{baldare:KK,baldare:H}. 
Furthermore, the case of actions through a central extension is quite natural 
and therefore deserved to be studied properly for families of transversally elliptic operators. 
As an example, it plays a fundamental role in the study of projective elliptic genera introduced 
by Han-Mathai in \cite{Mathai.genera}. It is now possible to investigate in future work the family counterpart of this work.
When the manifold has a  spin structure it is a well known fact that the non vanishing of 
$\hat{A}(M)$ is an obstruction to the existence of a metric with positive scalar curvature, 
see \cite{lichnerowicz1962laplacien,zhang2017positive,connes1983cyclic} and the references therein.
In \cite{gromov1980classification,gromov1980spin,gromov1983positive}, 
the authors introduced the notion of enlargeable manifolds and
proved several important non-existence theorems,   
see also \cite{BenameurHeitschInventiones} where extensions to foliations were considered.
Since then this questions were studied in the context of projective operators in \cite{PennigThese,pennig2014twisted}.
The present work should allow to investigate the same questions in the context of fibrations.\\
% and families of projective operators.

We would like to mention that other directions have been investigated in
 \cite{MMS4,MMS3,BenameurGorokhovsky1,BenameurGorokhovsky2,RouseCalcul,RouseTwistedLong} and the references therein. 
In \cite{MMS4,MMS3}, the authors deal with projective families of operators. 
In this case, the twist comes from the base space of the fibration and they obtained 
an index theorem in twisted $K$-theory and then deduced a cohomological formula.
In \cite{BenameurGorokhovsky1}, Benameur and Gorokhovsky showed a local index 
formula for projective families of Dirac operators using Bismut's superconnection approach \cite{bismut}, see also \cite{Quillen:superco}. 
In \cite{RouseTwistedLong}, Carrillo Rouse and Wang extended the setting from \cite{MMS3} 
to the case of foliations and showed a twisted index theorem in $K$-theory.
In \cite{RouseCalcul}, Carrillo Rouse defined the pseudodifferential calculus 
that corresponds to the twisted $K$-theory for Lie groupoids.
Independently, in \cite{BenameurGorokhovsky2} Benameur, Gorokhovsky and Leichtnam 
defined the corresponding pseudodifferential calculus in the special case 
of foliation, i.e. for the holonomy groupoid and showed higher index formulae
 using Bismut's superconnection approach and extended the result of \cite{BenameurGorokhovsky1}.
We point out that none of this results encompass our setting of 
families of projective operators and therefore in particular the setting of \cite{MMS1,MMS2}. 
We refer to \cite[Section 7.2]{RouseCalcul} for a discussion on this subject.

For interesting results concerning index theory, Lie groups and more generally groupoids, 
we refer the reader to \cite{Benameur:LongLefschetzKtheorie,Benameur:thmFamilleLefschetz,BenameurHeitschlefschetz,
Connes:integration:non:commutative,Connes:surveyFoliation,ConnesSkandalis,heitsch1990lefschetz,heitsch1991,hilsum1987morphismes,
Kasparov:KKindex,monthubert1997indice,yu2006higher} and the references therein. 
In particular, we point out the similar setting of gauge-invariant operators investigated in 
\cite{nistor2002asymptotics,nistor2004index,nistor2015analysis}.

\section{Preliminaries}
 
In this section we gather some well known facts about compact Lie groups that we will use in the sequel. 
%We will denote by $H$ a compact Lie group and by $\mathfrak{h}$ its Lie algebra. 
 
\subsection{Standard applications of Poincaré–Birkhoff–Witt theorem}

This subsection is devoted to standard results  related with Poincaré–Birkhoff–Witt theorem, see for example \cite{bourbakiChap1AlgLie}.
Let $H$ be a compact connected Lie group and $\mathfrak{h}$ its Lie algebra. 
Recall that $H$ acts on itself on the right by $R_g(x)=xg^{-1}$, on the left by $L_g(x)=gx$ 
and therefore by conjugation $Ad(g)x=R_gL_g(x)=L_gR_g(x)$. 
The action by conjugation is called the adjoint action. 
We denote the induced action of an element $s\in H$ on $ \h$ again by $Ad(s)$.\\

Let $\maU(\mathfrak{h})$ denote the universal enveloping algebra 
and $\maZ(\mathfrak{h}):=\maZ(\maU(\mathfrak{h}))$ its center. 
Denote by $\maU(\mathfrak{h})^{\mathfrak{h}}:=\{u\in \maU(\mathfrak{h})\ |\ uX=Xu,\ \forall X\in \mathfrak{h}\}$. 
We clearly have $ \maZ(\mathfrak{h}) \subset \maU(\mathfrak{h})^{\mathfrak{h}}$ 
and similarly if $v\in \maU(\mathfrak{h})^{\mathfrak{h}}$ 
then for any $Y_1,\cdots , Y_k \in \mathfrak{h}$ 
we have $[v,Y_1\cdots Y_k]=0$ and 
such products $Y_1\cdots Y_k$ generate $\maU (\mathfrak{h})$. 
In other words, $\maZ(\mathfrak{h})=\maU(\mathfrak{h})^\mathfrak{h}$. 
We denote by $C^{-\infty}_\gamma(H)$ the set of distributions on $H$ supported in $\gamma$. 
Let $S(\mathfrak{h})$ be the symmetric algebra. The following results are well known, 
we will only give the main ideas of the proofs for the convenience of the reader. 

\begin{prop}\label{prop:PBW}\ 
\begin{enumerate}
\item The enveloping algebra $\maU(\mathfrak{h})$ can be canonically identified 
with the algebra $C^{-\infty}_1(H)$ of distributions on $H$ supported at the identity. 
\item The center $\maZ(\mathfrak{h}) $ corresponds to the set $C^{-\infty}_1(H)^{Ad (H)}$ 
of $Ad (H)$-invariant distributions on $H$ supported at the identity.
\item Let $\gamma \in \maZ(H):=\{h\in H\ | \ \forall t\in H,\ ht=th\}$. 
The map $\maZ(\mathfrak{h}) \rightarrow C^{-\infty}_\gamma(H)^{Ad (H)}$ given by 
$T\mapsto T\ast \delta_\gamma$, where $\delta_\gamma$ is the Dirac delta function in $\gamma$, is an isomorphism.
\item The exponential map $\exp :\mathfrak{h} \rightarrow H$ defines a linear isomorphism (but not of algebras)
$$\exp_* : S(\mathfrak{h})^{Ad(H)} \rightarrow \maZ(\mathfrak{h}),$$
where $S(\mathfrak{h})^{Ad (H)}$ is viewed as the algebra of $Ad$-invariant distributions on $\mathfrak{h}$ supported at $0$.
\end{enumerate}  
\end{prop}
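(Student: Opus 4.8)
The plan is to establish the four statements essentially in the order given, since each builds on the previous one, and all of them reduce to standard consequences of the Poincaré–Birkhoff–Witt theorem together with the duality between distributions supported at a point and jets of test functions.

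First I would prove (1). The key observation is that a distribution on $H$ supported at the identity is a finite linear combination of the Dirac delta $\delta_1$ and derivatives $D\delta_1$, where $D$ ranges over left-invariant differential operators; equivalently, by duality, $C^{-\infty}_1(H)$ is the dual of the space of $\infty$-jets of smooth functions at $1$. A left-invariant vector field $X \in \mathfrak{h}$ acts on $C^\infty(H)$ by differentiation, and the assignment $X \mapsto (f \mapsto (Xf)(1))$, extended multiplicatively, realises $\maU(\mathfrak{h})$ as differential operators at $1$. The convolution product on $C^{-\infty}_1(H)$ corresponds precisely to composition of such operators (this is where one must check signs/conventions: $u \ast v$ acting on $f$ is $u$ applied to the function $x \mapsto v$ applied to $y \mapsto f(xy)$). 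The PBW theorem guarantees that ordered monomials in a basis of $\mathfrak{h}$ give a basis of $\maU(\mathfrak{h})$, matching the ordered monomials in the coordinate derivatives that give a basis of the jet dual; hence the map is an isomorphism of algebras.

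Next, (2) is immediate once (1) is in place: conjugation by $s \in H$ transports a distribution supported at $1$ to another distribution supported at $1$ (since $Ad(s)$ fixes $1$), and under the identification of (1) this action is exactly the adjoint action of $H$ on $\maU(\mathfrak{h})$. A distribution is $Ad(H)$-invariant if and only if it is infinitesimally invariant, i.e. annihilated by the bracket action of $\mathfrak{h}$, which by the discussion preceding the proposition is exactly the condition $u \in \maU(\mathfrak{h})^{\mathfrak{h}} = \maZ(\mathfrak{h})$. For (3), fix a central element $\gamma \in \maZ(H)$. Convolution with $\delta_\gamma$ translates support from $1$ to $\gamma$ and is clearly a linear bijection $C^{-\infty}_1(H) \to C^{-\infty}_\gamma(H)$ with inverse convolution by $\delta_{\gamma^{-1}}$. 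The point is that it commutes with the $Ad(H)$-action: because $\gamma$ is central, $Ad(s)$ fixes $\gamma$, so $Ad(s)(T \ast \delta_\gamma) = Ad(s)(T) \ast \delta_\gamma$; hence it restricts to an isomorphism $C^{-\infty}_1(H)^{Ad(H)} \to C^{-\infty}_\gamma(H)^{Ad(H)}$, which combined with (2) gives the stated isomorphism from $\maZ(\mathfrak{h})$.

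Finally, for (4), I would use the symmetrisation (Poincaré–Birkhoff–Witt) map $S(\mathfrak{h}) \to \maU(\mathfrak{h})$ sending $X_1 \cdots X_k$ to $\frac{1}{k!}\sum_{\sigma} X_{\sigma(1)} \cdots X_{\sigma(k)}$; this is a well-known linear (not algebra) isomorphism, $H$-equivariant for the two adjoint actions, hence restricts to a linear isomorphism $S(\mathfrak{h})^{Ad(H)} \to \maU(\mathfrak{h})^{\mathfrak{h}} = \maZ(\mathfrak{h})$. To match this with $\exp_*$, one identifies $S(\mathfrak{h})$ with distributions at $0 \in \mathfrak{h}$ (constant-coefficient differential operators, via the same jet-duality as in (1)), and checks that the pushforward along $\exp : \mathfrak{h} \to H$ of such a distribution agrees with the symmetrisation map; this is the content of the fact that the differential of $\exp$ at $0$ is the identity, so to first order $\exp$ identifies jets at $0$ with jets at $1$, and the symmetric (unordered) nature of constant-coefficient operators matches the symmetrisation convention. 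I expect this last identification — verifying that $\exp_*$ really is the symmetrisation map, with all conventions consistent — to be the main technical point; the rest is bookkeeping with PBW and convolution. For all four parts I would cite \cite{bourbakiChap1AlgLie} for the PBW input and only sketch these verifications, as indicated in the remark preceding the statement.
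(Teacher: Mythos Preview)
Your proposal is correct and follows essentially the same route as the paper's own proof: part (1) via PBW and the structure of distributions supported at a point, part (2) by matching the $Ad(H)$-action with the adjoint action on $\maU(\mathfrak{h})$ (using connectedness of $H$ to pass between global and infinitesimal invariance), part (3) by the convolution-translation argument with $\delta_{\gamma^{-1}}$ as inverse, and part (4) by identifying $\exp_*$ with the PBW symmetrisation map. The paper's write-up is at the same level of detail you propose, citing Schwartz for the surjectivity in (1) and otherwise sketching.
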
   

\begin{proof}
Recall that $C^{-\infty}_1(H)$ is an algebra for the convolution defined by 
$T\ast T'(f)=T\otimes T' (\mu^* f)$, where $\mu : H\times H \rightarrow H$ 
is the product on $H$, i.e. $\mu^*$ is the comultiplication. 
In other words, $T\ast T'(f)=\langle T_{h_1},\langle T'_{h_2},(R_{h_2}f)(h_1)\rangle \rangle$. 
%Let $f$ be such that $T\ast T'(f)=0$ then $\langle T', R_1f \rangle =0$ i.e. $f(1)=0$. 
\\

1. Denote by $\delta_1$ the Dirac delta function in $1\in H$. Let $D_1: \mathfrak{h} \rightarrow C^{-\infty}_1(H)$ 
be the map given by $X \mapsto D_1(X):= X^*_H \delta_1$, 
where $X^*_H\delta_1(f)=-X^*_H(f)(1)$ is the derivative of $\delta_1$ along $X\in \mathfrak{h}$. 
Clearly, $D_1([X,Y])=D_1(X)\ast D_1(Y)-D_1(Y)\ast D_1(X)$. 
Therefore, the universal property of $\maU(\mathfrak{h})$ implies that
$D_1$ can be extended to the universal enveloping algebra. 
The map $D_1$ is injective since, by Poincaré-Birkhoff-Witt theorem, 
a basis of $\maU(\mathfrak{h})$ is given by products $X_1^{j_1}\cdots X_n^{j_n}$, 
where $X_i$ is a basis of $\mathfrak{h}$ and $j_i\geq 0$. 
Moreover, the images are linearly independent differential operators composed with the Dirac delta function. 
The surjectivity follows from \cite[Theorem XXXV p 100]{schwartz:distrib}.\\

2. Since $H$ is compact and connected every element $h$ is in the image of the exponential map. 
Therefore, $u\in \maZ(\mathfrak{h})$ if and only if it commutes with every $X\in \mathfrak{h}$.
But this is equivalent to $Ad(e^X)u^*\delta_1=u^*\delta_1$ for any $X\in \mathfrak{h}$.
In other words, $u^*\delta_1$ is $Ad(H)$-invariant.\\

3. Let $T\in \maZ(\mathfrak{h})$ be identified with its corresponding element in $C^{-\infty}_1(H)^{Ad (H)}$. 
Clearly the convolution by $\delta_\gamma$ as support $\gamma$ since $\delta_1\ast \delta_\gamma$ as support $\gamma$. 
Moreover, the convolution by $\delta_\gamma$ is an isomorphism since the convolution by $\delta_{\gamma^{-1}}$ is an inverse. 
Now since $\gamma$ is central we have $Ad(h)\gamma=\gamma$. 
Therefore we get 
$T\ast \delta_\gamma(Ad( h)f)=T\otimes \delta_\gamma(\mu^*(Ad(h)f))=T(Ad(h)R_{Ad(h)\gamma}  f)=T(R_\gamma f)=T\ast \delta_\gamma(f)$.\\

4. Let $v_1,\cdots v_n$ be a basis of $\mathfrak{h}$.
 We can see $S(\mathfrak{h})$ as the algebra of distributions on $\mathfrak{h}$ supported in $0$  
 using the map $\sum a_\alpha v^\alpha \mapsto \sum (-1)^\alpha a_\alpha (v^*)^\alpha \delta_0$,
  where $\alpha=(\alpha_1,\cdots ,\alpha_k)$ and $v^\alpha =v_1^{\alpha_1} \cdots v_k^{\alpha_k}$. 
  The isomorphism is given on monomial by 
  $\exp_*(X_{i_1}\cdots X_{i_p})=\frac{1}{p!}\sum_{\sigma \in S_n} X_{i_{\sigma(1)}}\cdots X_{i_{\sigma(p)}}$ 
  and clearly if $T\in S(\mathfrak{h})$ is $Ad(H)$-invariant then its image sits in $\maZ(\mathfrak{h})$. 
  Indeed, $Ad(e^{tX})\exp_*(T)=\exp_*(Ad(e^{tX})T)=\exp_*(T)$ and 
  therefore $\exp_*(T)$ commutes with every $X\in \mathfrak{h}$, i.e. $\exp_*(T)\in \maZ(\mathfrak{h})$. 
  Notice that the convolution on $S(\mathfrak{h})$ is commutative since it comes from the additive structure on $\mathfrak{h}$. \\ 
Recall the identifications of $S(\mathfrak{h})^{Ad(H)}$ with the algebra of $Ad(H)$-invariant distributions on $\mathfrak{h}$ supported in $0$ 
and of $\maZ(\mathfrak{h})$ with the algebra of $Ad(H)$-invariant distributions  on $H$ supported in $1$. 
The map $\exp_*$ is the usual pushforward of distributions, i.e. if $T \in C^{-\infty}(U)$,
$\phi : U \rightarrow V$ is a smooth map such that $f\vert_{\operatorname{supp}(T)}$ is proper, 
and $f\in C_c^\infty(V)$ then $\phi_*T(f)=T(\phi^*f)$.

\end{proof}

\begin{remarque}
If $\xymatrix{1 \ar[r] & \Gamma \ar[r] &  \tilde{G} \ar[r]^{\zeta} & G \ar[r] &1}$ 
is a central extension of a group $G$ by a finite group $\Gamma$, as in Equation \eqref{eq.ext.cent}, 
then $Ad (G)$-invariant functions and distributions on $\tilde{G}$ 
(respectively on $\g$) are the same as $Ad(\tilde G)$-invariant functions and distributions on $\tilde{G}$ 
(respectively on $\g$). Indeed, let $\tilde{g}_1$ and $\tilde{g}_2 \in \tilde{G}$ 
be such that $\zeta(\tilde{g}_1)=\zeta(\tilde{g}_2)$ then there is $\gamma \in \Gamma$ 
such that $\tilde{g}_1=\tilde{g}_2\gamma$. Now since $\Gamma$ is in the center of $\tilde{G}$, 
we obtain that $Ad(\tilde{g}_1)=Ad(\tilde{g}_2)$ on $\tilde{G}$ and $\g$.\\
In this case we obtain that $\exp_* : S(\mathfrak{g})^{Ad (G)} \rightarrow \maZ(\g)$ 
is an isomorphism, where $\maZ(\g)$ is viewed as $Ad (\tilde{G})$-invariant distributions on $\tilde{G}$ supported in $1$.
\end{remarque}

\subsection{Restrictions of generalized functions}
Here we recall facts about restrictions of generalized functions, see \cite[Section 2.3]{DV:comoEquiDescente}.
Let $H$ be a compact (non necessarily connected) Lie group and let $s\in H$. 
Recall that $H(s):=\{h\in H,\ hs=sh\}$ can be seen as the closed subgroup of 
$H$ given by the stabilizer $\operatorname{Stab}_{Ad (H)}(s)=\{h\in H, \ Ad(h)s=s\}$ in $H$ 
of $s$ for the adjoint action but also as the submanifold of $H$ of fixed points 
$H^{Ad (s)}=\{h\in H, \ Ad(s)h=h\}$ by $Ad(s)$. Denote by $\h(s):=\{Y\in \h,\ Ad(s)Y=Y\}$ 
the Lie algebra of $H(s)$. If we chose a $Ad (H)$-invariant scalar product on $\h$ 
then we get a bi-invariant metric on $H$, i.e. a metric which is both left invariant and right invariant. 
Using this metric $Ad(H)s\times\h(s)$ can be identified with the orthogonal to the $Ad (H)$ orbit of $s$. 
By the slice theorem we obtain that there is an open set $\maU_s(0)\subset \h(s)$ 
such that $H\times_{H(s)} \maU_s(0)$ identifies with an open neighbourhood $W(s,0)$ of $Ad(H)s$. 
The identification $\Phi$ is given by $[k,Y] \mapsto Ad(k)\exp_s(Y)=kse^Yk^{-1}$ 
since the exponential map for a bi-invariant Riemannian metric coincides with the Lie group exponential map. \\
When $V$ is a finite dimensional vector space, we denote by $\det_{V}(J)$ the determinant of $J\in \End(V)$. 
Since $\h(s)$ is $Ad(s)$-invariant, we can restrict $Ad(s)$ to $\mathfrak{q}(s)=\h(s)^\perp\cong\h/\h(s)$. 
Let us recall briefly the following results, see \cite[Section 2.2 $\&$ 2.3]{DV:comoEquiDescente} for more details.

\begin{lem}\cite{DV:comoEquiDescente} Let $s\in H$. 
\begin{enumerate}
\item We have $\h(s)^\perp=\mathrm{im}(\id-Ad(s))$ and 
$\det_{\mathfrak{q}(s)}(\id - Ad(s))>0$.
\item If $\maU_s(0)$ is a small enough neighbourhood of $0\in \h(s)$ then $\det_{\mathfrak{q}(s)}(\id - Ad(se^Y))>0$, $\forall Y\in \maU_s(0)$.
 \item The differential of $\Phi : H\times_{H(s)} \maU_s(0) \rightarrow H$, i.e. the differential  $ d_{[k,Y]}\Phi $ is 
 given modulo composition with $dL_{ke^{-Y}s^{-1}k^{-1}}$ by 
%$$D(X,Z)=Ad(k)(dL_sd\exp(Y)Z+dR_{se^Y}(\id-Ad(se^Y))X),$$
$$D(X,Z)=
Ad(k)\big(e^{-Y}d\exp(Y)Z +(Ad(se^Y)^{-1} -\id)X\big)$$
for any $(X,Z)\in \mathfrak{q}(s) \times \h(s)\cong T_{[k,Y]}H\times_{H(s)} \maU_s(0)$. 
\item We have 
%$$|\det(d_{[k,Y]}\Phi)|=|\det_\h(Ad(k))| \det_{\mathfrak{q}(s)}(\id-Ad(se^Y)) \det_{\h(s)}(e^{-Y}d\exp(Y)).$$
$$|\det(d_{[k,Y]}\Phi)|=\det_{\mathfrak{q}(s)}(\id-Ad(se^Y)) |\det_{\h(s)}(e^{-Y}d\exp(Y))|.$$
\end{enumerate}

\end{lem}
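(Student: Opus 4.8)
The plan is to prove the four assertions of the Lemma in sequence, since (3) and (4) are essentially computational consequences of the slice decomposition, while (1) and (2) contain the geometric input.

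\medskip

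\textbf{Step 1 (the splitting and positivity of the determinant).} First I would establish $\h(s)^\perp=\mathrm{im}(\id-Ad(s))$. Since $Ad(s)$ is orthogonal with respect to the $Ad(H)$-invariant inner product on $\h$, it is in particular normal, so $\h$ decomposes orthogonally as $\ker(\id-Ad(s))\oplus \mathrm{im}(\id-Ad(s))$, and $\ker(\id-Ad(s))=\h(s)$ by definition; hence $\mathfrak q(s)=\h(s)^\perp=\mathrm{im}(\id-Ad(s))$ and $\id-Ad(s)$ restricts to an automorphism of $\mathfrak q(s)$. For the positivity of $\det_{\mathfrak q(s)}(\id-Ad(s))$, the key observation is that the orthogonal transformation $Ad(s)$ restricted to $\mathfrak q(s)$ has no eigenvalue $1$, so its complex eigenvalues come in conjugate pairs $e^{\pm i\theta}$ with $\theta\notin 2\pi\Z$ (and possibly the eigenvalue $-1$ with even multiplicity, since $Ad(s)$ lies in the identity component if $H$ is connected — but more robustly, $\det_{\mathfrak q(s)}(Ad(s))=1$ because $Ad(s)$ preserves orientation of $\mathfrak q(s)$, being conjugate via the path $t\mapsto Ad(s^t)$... actually one should just argue via eigenvalue pairing). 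Each conjugate pair contributes $(1-e^{i\theta})(1-e^{-i\theta})=2-2\cos\theta>0$ to the determinant, and a $-1$-eigenspace of even dimension contributes $2^{\dim}>0$; thus $\det_{\mathfrak q(s)}(\id-Ad(s))>0$. This is the step I expect to be the main obstacle, because one must be careful that $Ad(s)$ has no odd-dimensional $(-1)$-eigenspace; this follows from $\det_{\mathfrak q(s)}Ad(s)=1$, which in turn I would get from the fact that $Ad(s)$ on all of $\h$ has determinant $1$ (it is orthogonal and, $H$ being compact connected in the relevant applications, lies in a connected group, hence in $SO(\h)$) together with $\det$ being $1$ on the fixed subspace $\h(s)$.

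\medskip

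\textbf{Step 2 (openness of the positivity condition).} For assertion (2), the map $Y\mapsto \det_{\mathfrak q(s)}(\id-Ad(se^Y))$ is continuous on $\h(s)$ — note that $Ad(se^Y)$ preserves $\mathfrak q(s)$ since $[s,e^Y]$ acts trivially on $\h(s)$ for $Y\in\h(s)$, so that the determinant over $\mathfrak q(s)$ makes sense — and it is strictly positive at $Y=0$ by Step 1; hence it remains positive on a sufficiently small neighbourhood $\maU_s(0)$ of $0$. I would simply invoke continuity.

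\medskip

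\textbf{Step 3 (the differential of $\Phi$).} For assertion (3) I would differentiate $\Phi([k,Y])=ke^Ys k^{-1}$ (using $se^Y=e^Ys$ for $Y\in\h(s)$, or keeping the stated form $Ad(k)\exp_s(Y)$). Choosing a curve $t\mapsto [k e^{tX}, Y+tZ]$ with $X\in\mathfrak q(s)$, $Z\in\h(s)$ — this represents a general tangent vector after using the $H(s)$-invariance to arrange the $X$-part to be horizontal — I would compute the $t$-derivative at $0$ by the Leibniz rule for the three factors $ke^{tX}$, $e^{Y+tZ}s$, $e^{-tX}k^{-1}$. Left-translating the result back to the identity (i.e. composing with $dL_{(ke^{-Y}s^{-1}k^{-1})}$ as in the statement), the $e^{Y}s$-factor contributes $e^{-Y}d\exp(Y)Z$ via the standard formula for the differential of the exponential map, and the two $X$-factors combine, after conjugating one of them through $e^Ys$, into $(Ad(se^Y)^{-1}-\id)X$; the overall $Ad(k)$ comes from conjugating everything by $k$. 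This yields exactly $D(X,Z)=Ad(k)\big(e^{-Y}d\exp(Y)Z+(Ad(se^Y)^{-1}-\id)X\big)$.

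\medskip

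\textbf{Step 4 (the Jacobian).} For assertion (4), I observe that in the formula for $D(X,Z)$ the operator is block-triangular with respect to the decomposition $T_{[k,Y]}(H\times_{H(s)}\maU_s(0))\cong \mathfrak q(s)\oplus\h(s)$: the $\h(s)$-component $Z$ maps (via $Ad(k)e^{-Y}d\exp(Y)$, which preserves $\h(s)$ since $Ad(k)$ is an isometry and $d\exp(Y)$ commutes appropriately... more carefully, $d\exp(Y)$ for $Y\in\h(s)$ preserves $\h(s)$) into $Ad(k)\h(s)$, while the $X$-component maps via $Ad(k)(Ad(se^Y)^{-1}-\id)$ into $Ad(k)\mathfrak q(s)$, with a possible off-diagonal term. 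Since $Ad(k)$ is orthogonal, $|\det Ad(k)|=1$, and the absolute value of the determinant of the block-triangular operator is the product of the absolute values of the two diagonal blocks: $|\det_{\h(s)}(e^{-Y}d\exp(Y))|$ and $|\det_{\mathfrak q(s)}(Ad(se^Y)^{-1}-\id)|$. Finally $|\det_{\mathfrak q(s)}(Ad(se^Y)^{-1}-\id)|=|\det_{\mathfrak q(s)}(\id-Ad(se^Y))|$ because $Ad(se^Y)^{-1}-\id=-Ad(se^Y)^{-1}(\id-Ad(se^Y))$ and $|\det_{\mathfrak q(s)}Ad(se^Y)^{-1}|=1$; moreover by Step 2 this quantity equals $\det_{\mathfrak q(s)}(\id-Ad(se^Y))$ without absolute values for $Y\in\maU_s(0)$. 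This gives the stated formula $|\det(d_{[k,Y]}\Phi)|=\det_{\mathfrak q(s)}(\id-Ad(se^Y))\,|\det_{\h(s)}(e^{-Y}d\exp(Y))|$.
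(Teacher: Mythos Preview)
Your proof is correct and follows essentially the same route as the paper's. One small simplification: in Step~1 your worry about the parity of the $(-1)$-eigenspace is unnecessary, since each real eigenvalue $-1$ contributes a factor $1-(-1)=2>0$ to $\det_{\mathfrak q(s)}(\id-Ad(s))$ regardless of multiplicity (the paper just pairs the non-real eigenvalues and treats $-1$ directly), and in Step~4 the paper observes the map is actually block-\emph{diagonal}, not merely triangular, because $(\id-Ad(se^Y))\mathfrak q(s)\subset\mathfrak q(s)$ and $e^{-Y}d\exp(Y)Z\in\h(s)$.
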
  

\begin{proof}
1. Let $v\in \h$ then $v-Ad(s)v \in \h(s)^\perp$. Indeed, let $w\in \h(s)$ then $Ad(s)w=Ad(s^{-1})w=w$ therefore
$$\langle v-Ad(s)v,w\rangle=\langle v,w\rangle -\langle v,Ad(s)^{-1}w\rangle =\langle v,w\rangle - \langle v,w\rangle =0.$$
But $\h(s)=\ker(\id-Ad(s))$ and $\dim \h(s)^\perp=\dim \h -\dim \h(s) = \dim \mathrm{im}(\id-Ad(s))$. 
Recall that $t \mapsto \det_\h(\id - tAd(s))$ is a real polynomial. Now, since $Ad(s) $ is orthogonal,  
every eigenvalue has modulus $1$ and on $\mathfrak{q}(s)=\h/\h(s)$ every eigenvalue is different from $1$. 
If $-1$ is an eigenvalue then $1-(-1)=2>0$. 
Moreover, every complex eigenvalue $\mu$ is pair conjugate 
and $(1-\bar{\mu})(1-\mu)=|1-\mu |^2>0$.\\

2. The map $Y\mapsto f(Y)=\det_{\mathfrak{q}(s)}(\id - Ad(se^Y))$ is continuous and $\det_{\mathfrak{q}(s)}(\id - Ad(s))>0$ therefore 
$f^{-1}(]0 ,+\infty[)$ is open in $\h(s)$ and contains $0$.\\

3. Notice that the tangent space $T_{(0,Y)}\mathfrak{q}(s) \times \h(s)$ to $\mathfrak{q}(s) \times \h(s)$ 
in $(0,Y)$ identifies with the tangent space $T_{[k,Y]}H\times_{H(s)}\maU_s(0)$ 
to $H\times_{H(s)}\maU_s(0)$ in $[k,Y]$. 
Indeed, use the differential of the map 
$j :\mathfrak{q}(s) \times \h(s) \rightarrow H\times_{H(s)} \maU_s(0)$ 
given by $(X,Y) \mapsto [ke^X,Y]$. Furthermore, the left translation gives the identification
$T_{kse^Yk^{-1}}H \rightarrow \h$. Modulo this identifications, 
we have for $(X,Z) \in \mathfrak{q}(s) \times \h(s)\cong T_{(0,Y)} \mathfrak{q}(s) \times \h(s)$: 
$$\begin{array}{lll}
D(X,Z)&=\dfrac{d}{dt}_{|t=0} ke^{-Y}s^{-1}k^{-1}ke^{tX}se^{Y+tZ}e^{-tX}k^{-1}\\
&= Ad(k) \dfrac{d}{dt}_{|t=0} e^{-Y}s^{-1}e^{tX}se^{Y+tZ}e^{-tX}\\
&=Ad(k)\big(Ad(e^{-Y}s^{-1})X +e^{-Y}d\exp(Y)Z -X\big)\\
&= Ad(k)\big(e^{-Y}d\exp(Y)Z +(Ad(se^Y)^{-1} -\id)X\big).
%&=     Ad(k)(Xse^Y + dL_s d\exp(Y)Z -se^YX)\\
%&=Ad(k)\circ \big((dR_{se^Y}-dL_{se^Y})X+dL_{s}d\exp(Y)Z\big)\\
%&=Ad(k)\big(dL_sd\exp(Y)Z+dR_{se^Y}(\id-Ad(se^Y))X\big).
\end{array} $$

4. We have $e^{-Y}d\exp(Y)Z\in \h(s)$ since $e^{Y+tZ}\in H(s)$ and similarly $(\id-Ad(se^Y))X \in \mathfrak{q}(s)$ 
because $(\id-Ad(se^Y))\mathfrak{q}(s) \subset \mathfrak{q}(s)$. 
%Furthermore, we have $dR_{s^{-1}}dL_s(dL_{e^{-Y}}d\exp(Y)Z)=Ad(s)(dL_{e^{-Y}}d\exp(Y)Z)=dL_{e^{-Y}}d\exp(Y)Z$ because 
%$d\exp(Y)Z \in \h(s)=\ker (\id- Ad(s))$. Therefore, we obtain 
%\begin{align*}
%D(X,Z)&=Ad(k)dR_{se^Y}\big(e^{-Y}d\exp(Y)Z+(\id-Ad(se^Y))X\big)\\
%&=dR_{kse^Yk^{-1}}Ad(k)\big(e^{-Y}d\exp(Y)Z+(\id-Ad(se^Y))X\big).
%\end{align*}
%Identifying
%$T_{kse^Yk^{-1}}H$ with $\h$ using right translation, it follows
%$$\det_\h(dR_{ke^{-Y}s^{-1}k^{-1}}d_{[k,Y]}\Phi)=\det_\h(Ad(k))\det_{\h(s)}(e^{-Y}d\exp(Y)) \det_{\mathfrak{q}(s)}(\id-Ad(se^Y)).$$
Since the metric on $\h$ is $Ad(H)$-invariant, we get that $|\det_\h(Ad(k))|=|\det_{\mathfrak{q}(s)}(Ad(se^Y))|=1$.
This gives the result using 3.
\end{proof}

%\begin{lem}\cite{DV:comoEquiDescente}
%If $\maU_s(0)$ is a small enough neighbourhood of $0\in \h(s)$ then $\det_{\h/\h(s)}(\id - Ad(se^Y))>0$, $\forall Y\in \maU_s(0)$.
%\end{lem}

%\begin{proof}
%The map $Y\mapsto f(Y)=\det_{\h/\h(s)}(\id - Ad(se^Y))$ is continuous and $\det_{\h/\h(s)}(\id - Ad(s))>0$ therefore 
%$f^{-1}(]0 ,+\infty[)$ is open in $\h(s)$ and contains $0$.
%%there is $\varepsilon >0$ such that $\det_{\h/\h(s)}(\id - Ad(s))-\varepsilon >0$. We obtain that $f^{-1}(]\det_{\h/\h(s)}(\id - Ad(s))-\varepsilon ,+\infty[)$ is open in $\h(s)$.
%\end{proof}
 
%\begin{lem}\cite{DV:comoEquiDescente}
%The differential of $\Phi : H\times_{H(s)} \maU_s(0) \rightarrow H$ composed with the identification of $T_gH \rightarrow \h$ given by the left translation, i.e. the differential of $dL_{ge^{-Y}s^{-1}g^{-1}}\circ d_{ge^{-Y}s^{-1}g^{-1}}\Phi $ is  
%$$D(X,Z)=Ad(g)(e^{-Y}d\exp(Y)Z+(Ad(se^Y)^{-1}-\id)X).$$ 
%
%\end{lem} 
% 
%
%\begin{proof}
%We have 
%$$\begin{array}{lll}
%D(X,Z)&=\dfrac{d}{dt}_{|t=0} gs^{-1}e^{-Y}g^{-1}ge^{tX}se^{Y+tZ}e^{-tX}g^{-1}\\
%&=      gs^{-1}e^{-Y}g^{-1}(gXse^Yg^{-1} + gsd\exp(Y)Zg^{-1} -gse^YXg^{-1})\\
%&=Ad(g)\circ dL_{s^{-1}e^{-Y}}(dR_{se^Y}-dL_{se^Y})X+dL_{s}d\exp(Y)Z)\\
%&=Ad(g)(e^{-Y}d\exp(Y)Z+(Ad(se^Y)^{-1}-\id)X).
%\end{array} $$
%\end{proof} 

Recall that the differential $d\exp(Y)$ of the exponential map 
$\exp : \h(s) \rightarrow H(s)$ is given in $Y\in \h(s)$  by $d\exp(Y)=e^Y\dfrac{1-e^{-ad(Y)}}{ad(Y)}$, 
where $ad$ is the differential of the adjoint action $Ad$ on $\h(s)$. 
We denote as usual by $j_{\h(s)}(Y)=\det_{\h(s )}\bigg( \dfrac{1-e^{-ad(Y)}}{ad(Y)}\bigg)$ the Jacobian determinant of $d\exp(Y)$ 
which is positive on $\maU_s(0)$ if $\maU_s(0)$ is small enough.\\
 
Let $\mu_s$ and $\mu$ denote respectively the normalized Haar measures on $H(s)$ and $H$. 
Recall that there is a unique $H$-invariant measure $\mu_{H/H(s)}$ on $H/H(s)$ 
such that $d\mu=d\mu_s d\mu_{H/H(s)}$ defined by the linear functional 
$f\in C(H/H(s))\mapsto \int_{H/H(s)}f d\mu_{H/H(s)}:=\int_H f\circ \pi(h) d\mu(h)$, where $\pi : H \rightarrow H/H(s)$ is the canonical fibration. We denote $d\mu_{H/H(s)}$ by $dq$ and $d\mu_s$ by $dy$. Denote by $dX$ the tangent Lebesgue measure on $\h$ and respectively by $dY$ and $dQ$ Lebesgue measures on $\h(s)$ and $\mathfrak{q}(s)$ tangent to $dy$ and $dq$ such that $dX=dYdQ$, see \cite{DV:comoEquiDescente}. 

\begin{thm}\cite{DV:comoEquiDescente}\label{thm:descente:Harish:DufloVergne}
%%Let $\alpha  \in C^\infty(H)^{Ad H}$ and $\varphi \in C^{\infty}(H)$ be functions supported in $W(s,0)\cong H\times_{H(s)} \maU_s(0)$. Then 
%%$$\int_{W(s,0)} \alpha(h)\varphi(h)dh=\int_{H/H(s)} \det_\h(Ad(y))\int_{\maU_s(0)} \alpha(se^Y)\varphi(yse^Ys^{-1})\det_{\mathfrak{q}(s)}(\id - se^Y) j_\h(Y) dQdy.$$
%%Let $\theta \in C^{-\infty}(H)^{Ad H}$ then there is a unique $\psi\in C^{-\infty}(\maU_s(0))^{H(s)}$ such that $\forall \varphi \in C^\infty(H)^{Ad H}$ supported in $W(s,0)$
%%$$\int_{W(s,0)} \theta(h)\varphi(h)dh=\int_{H/H(s)} \det_\h(Ad(y))\int_{\maU_s(0)} \psi(se^Y)\varphi(yse^Ys^{-1})\det_{\mathfrak{q}(s)}(\id - se^Y) j_\h(Y) dQdy.$$
%Let $\alpha  \in C^\infty(H)^{Ad H}$ and $\varphi \in C^{\infty}(H)$ be functions supported in $W(s,0)\cong H\times_{H(s)} \maU_s(0)$. Then 
%$$\int_{W(s,0)} \alpha(h)\varphi(h)d\mu(h)=\int_{H/H(s)} \det_\h(Ad(q))\int_{\maU_s(0)} \alpha(se^Y)\varphi(qse^Yq^{-1})\det_{\mathfrak{q}(s)}(\id - se^Y) j_{\h(s)}(Y) dYdq.$$
%Let $\theta \in C^{-\infty}(H)^{Ad H}$ then there is a unique $\psi\in C^{-\infty}(\maU_s(0))^{H(s)}$ such that $\forall \varphi \in C^\infty(H)^{Ad H}$ supported in $W(s,0)$
%$$\int_{W(s,0)} \theta(h)\varphi(h)d\mu(h)=\int_{H/H(s)} \det_\h(Ad(q))\int_{\maU_s(0)} \psi(se^Y)\varphi(qse^Yq^{-1})\det_{\mathfrak{q}(s)}(\id - se^Y) j_{\h(s)}(Y) dYdq.$$
Let $\alpha  \in C^\infty(H)^{Ad H}$ and $\varphi \in C^{\infty}(H)$ be functions supported in $W(s,0)\cong H\times_{H(s)} \maU_s(0)$. Then 
$$\int_{W(s,0)} \alpha(h)\varphi(h)d\mu(h)=\int_{H/H(s)} \int_{\maU_s(0)} \alpha(se^Y)\varphi(qse^Yq^{-1})\det_{\mathfrak{q}(s)}(\id - se^Y) j_{\h(s)}(Y) dYdq.$$
Let $\theta \in C^{-\infty}(H)^{Ad H}$ then there is a unique $\psi\in C^{-\infty}(\maU_s(0))^{H(s)}$ such that $\forall \varphi \in C^\infty(H)^{Ad H}$ supported in $W(s,0)$
$$\int_{W(s,0)} \theta(h)\varphi(h)d\mu(h)=\int_{H/H(s)} \int_{\maU_s(0)} \psi(se^Y)\varphi(qse^Yq^{-1})\det_{\mathfrak{q}(s)}(\id - se^Y) j_{\h(s)}(Y) dYdq.$$

\end{thm}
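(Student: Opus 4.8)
The plan is to prove the first (smooth) assertion as a change of variables along the slice diffeomorphism $\Phi$, and the second (distributional) one as its formal transpose, so that $\psi$ is produced by duality from the smooth identity. For the smooth identity: by the slice description recalled before the statement, $\Phi\colon H\times_{H(s)}\maU_s(0)\to W(s,0)$, $[k,Y]\mapsto kse^Yk^{-1}$, is a diffeomorphism, and $H\times_{H(s)}\maU_s(0)\to H/H(s)$ is a fibre bundle with fibre $\maU_s(0)$ (the slice theorem provides an $H(s)$-invariant $\maU_s(0)$, which we use). Since each $Ad(h)$, $h\in H(s)$, is orthogonal for the bi-invariant metric it preserves Lebesgue measure on $\h(s)$, so the fibrewise measure $dY$ descends and, with $dq$ on $H/H(s)$, yields a measure $dq\,dY$ on $H\times_{H(s)}\maU_s(0)$; with the normalisations $d\mu=dy\,dq$ and $dX=dY\,dQ$ fixed above, the change of variables along $\Phi$ reads
$$\int_{W(s,0)}g(h)\,d\mu(h)=\int_{H/H(s)}\int_{\maU_s(0)}g(qse^Yq^{-1})\,\bigl|\det d_{[q,Y]}\Phi\bigr|\,dY\,dq,\qquad g\in C_c(W(s,0)).$$
By item (4) of the Lemma, $\bigl|\det d_{[q,Y]}\Phi\bigr|=\det_{\mathfrak q(s)}(\id-Ad(se^Y))\,\bigl|\det_{\h(s)}(e^{-Y}d\exp(Y))\bigr|$, and the formula for $d\exp$ recalled afterwards identifies the last factor with $j_{\h(s)}(Y)$; both factors are positive on $\maU_s(0)$ once it is small enough (items (1)--(2) of the Lemma). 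Taking $g=\alpha\varphi$ and using $\alpha(qse^Yq^{-1})=\alpha(se^Y)$ for $\alpha\in C^\infty(H)^{Ad H}$ gives the first displayed formula.

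For the distributional identity, set $m(Y):=\det_{\mathfrak q(s)}(\id-Ad(se^Y))\,j_{\h(s)}(Y)$, a smooth, strictly positive, $H(s)$-invariant function on $\maU_s(0)$ ($H(s)$-invariance is immediate since $Ad(h)$ preserves $\h(s)$ and $\mathfrak q(s)$ and acts there by conjugation). The map
$$\mathcal T\colon\ \{\,\varphi\in C^\infty(H)^{Ad H}\ :\ \operatorname{supp}\varphi\subset W(s,0)\,\}\ \longrightarrow\ C_c^\infty(\maU_s(0))^{H(s)},\qquad (\mathcal T\varphi)(Y):=m(Y)\,\varphi(se^Y),$$
is a topological isomorphism: $Y\mapsto\varphi(se^Y)$ is $H(s)$-invariant because $\varphi$ is $Ad H$-invariant and $se^{Ad(h)Y}=h(se^Y)h^{-1}$; it determines $\varphi$ on $W(s,0)$ by invariance (injectivity); every $\chi\in C_c^\infty(\maU_s(0))^{H(s)}$ arises this way via $\varphi(kse^Yk^{-1}):=\chi(Y)$, which is well defined by the diffeomorphism $\Phi$ and the $H(s)$-invariance of $\chi$, smooth, $Ad H$-invariant, compactly supported in $W(s,0)$, extended by $0$ (surjectivity); and multiplication by the positive invariant $m$ preserves $C_c^\infty(\maU_s(0))^{H(s)}$. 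Given $\theta\in C^{-\infty}(H)^{Ad H}$, set $\langle\psi,\mathcal T\varphi\rangle:=\langle\theta,\varphi\rangle$ (well posed by injectivity of $\mathcal T$, continuous since $\mathcal T$ is a topological isomorphism), and let $\psi\in C^{-\infty}(\maU_s(0))$ be the unique $H(s)$-invariant extension of this functional, obtained by precomposing with averaging $\operatorname{Av}_{H(s)}$ over $H(s)$. Then for $\varphi\in C^\infty(H)^{Ad H}$ supported in $W(s,0)$, using $\varphi(qse^Yq^{-1})=\varphi(se^Y)$ and that $dq$ is a probability measure, the right-hand side of the claimed identity equals $\langle\psi(Y),m(Y)\varphi(se^Y)\rangle=\langle\psi,\mathcal T\varphi\rangle=\langle\theta,\varphi\rangle$ (here $\psi$ is regarded, via $Y\mapsto se^Y$, as a distribution in $Y$); and if $\psi'$ also satisfies the identity then $\langle\psi'-\psi,\mathcal T\varphi\rangle=0$ for all such $\varphi$, so $\psi'-\psi$ vanishes on $C_c^\infty(\maU_s(0))^{H(s)}$, whence $\psi'=\psi$ by $H(s)$-invariance. (Since $\theta$ is $Ad H$-invariant one has $\langle\theta,\varphi\rangle=\langle\theta,\operatorname{Av}_{Ad H}\varphi\rangle$, and a parallel manipulation shows the right-hand side is likewise unchanged under conjugation-averaging of $\varphi$; this is why the $q$-integral is kept in the statement even though $\varphi$ is invariant.)

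Apart from the two bijections, everything is bookkeeping, and the one point needing genuine care is the first step: that, with the normalisations $d\mu=dy\,dq$ and $dX=dY\,dQ$, the Jacobian of $\Phi$ with respect to the induced measure $dq\,dY$ is precisely $\det_{\mathfrak q(s)}(\id-Ad(se^Y))\,j_{\h(s)}(Y)$ — equivalently, that the $Ad(k)$ factor and the left-translations appearing in item (3) of the Lemma contribute unimodularly. This is exactly where the $Ad(H)$-invariance and left-invariance of the metric enter, and is the content imported from \cite{DV:comoEquiDescente}; granting it, the first identity is a change of variables and the second is its transpose together with the elementary fact that an invariant distribution is determined by its pairings with invariant test functions.
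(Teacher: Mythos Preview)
The paper does not supply its own proof of this theorem: it is stated with the citation \cite{DV:comoEquiDescente} and used as a black box, the only commentary being the sentence after the statement naming the restricted distribution $\theta\|_s$. There is therefore nothing in the paper to compare your argument against.

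That said, your proposal is a correct outline of the standard proof. The smooth identity is exactly the change of variables along the slice diffeomorphism $\Phi$, with Jacobian supplied by item~(4) of the preceding Lemma and the identification $|\det_{\h(s)}(e^{-Y}d\exp(Y))|=j_{\h(s)}(Y)$; the unimodularity of $Ad(k)$ and of left translation (from the bi-invariant metric) disposes of the remaining factors, as you note in your final paragraph. For the distributional part, your transpose construction via the isomorphism $\mathcal T$ and the $H(s)$-averaging extension is the right mechanism, and your uniqueness argument is sound. The only point you leave slightly implicit is the continuity of $\mathcal T^{-1}$ (needed to conclude that $\psi$ is a genuine distribution and not merely a linear functional), but this is clear from the explicit formula $\varphi(kse^Yk^{-1})=m(Y)^{-1}(\mathcal T\varphi)(Y)$ and the smoothness and positivity of $m$ on $\maU_s(0)$.
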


This means that $\theta\in C^{-\infty}(H)^{H}$ defines by restriction an element $\psi\in C^{-\infty}(\maU_s(0))^{H(s)}$. 
We will denote the restricted element by $\theta\|_s$. 
For details on restrictions of invariant generalized functions see for instance \cite{DV:comoEquiDescente, paradan2008index}.

\subsection{Equivariant cohomology}\label{section:equiv:form}
Here we recall the definition of equivariant cohomologies 
%and equivariant forms which will be 
used in the sequel, see \cite{BGV,BV:equiChernCaracter,DV:comoEquiDescente}. 
Let again $H$ be a compact Lie group and $\mathfrak{h}$ its Lie algebra. 
Assume that $H$ acts smoothly on a manifold $W$ (we say that $W$ is a $H$-manifold). 
Let $X\in \mathfrak{h}$ and denote by $X_W$ the vector field generated by $X$ on $W$ 
that is $X_W(f)(w)=\frac{d}{dt}_{|t=0}f(e^{-tX}\cdot w)$, $\forall f\in C^{\infty}(W),\ w\in W$. 
Let $d$ be the de Rham differential and let $\iota (Y) $ denote the contraction by a vector field $Y$. 
Let $\mathcal{A}(W)$ be the space of differential forms on $W$. 
Recall that the group $ H$ acts on $\maA(W)$ and consider 
the tensored product $C^{\infty}(\mathfrak{h}) \otimes \mathcal{A}(W)$ equipped
 with the tensored action given by $(s \cdot \alpha)(X)=s(\alpha(Ad(s)^{-1}X))$, 
 for any $\alpha \in C^{\infty}(\mathfrak{h}) \otimes \mathcal{A}(W)$.
Let $\mathcal{A}^{\infty}_H(\mathfrak{h},W)$ denote the algebra 
$\big(C^{\infty}(\mathfrak{h}) \otimes \mathcal{A}(W)\big)^H$ 
of $H$-invariant smooth functions on $\mathfrak{h}$ with values in $\mathcal{A}(W)$.
Let $D$ be the equivariant differential on $\mathcal{A}^{\infty}_H(\mathfrak{h},W)$ given by 
$$(D\alpha)(X)=d(\alpha(X))-\iota (X_W)(\alpha(X)).$$
We have $(D^2\alpha)(X)=-\mathscr{L}(X)\alpha(X)$ so $D^2$ is zero 
on $\mathcal{A}^{\infty}_H(\mathfrak{h},W)$ 
because any element of $\mathcal{A}_H^{\infty}(\mathfrak{h},W)$ is $H$-invariant.

\begin{defi}
The equivariant cohomology $\mathcal{H}^{\infty}_H(\mathfrak{h} ,W)$ with 
smooth coefficients is the cohomology of the complex $(\mathcal{A}^{\infty}_H(\mathfrak{h} ,W),D)$.
\end{defi}

We now recall the definition of the equivariant cohomology with generalised coefficients 
\cite{duflo1990orbites}, see also \cite{kumar1993equivariant}. 
Let $C^{-\infty }(\mathfrak{h},\mathcal{A}(W))$ 
be the space of generalised functions on $\mathfrak{h}$ with values in $\mathcal{A}(W)$. 
By definition, this is the space of continuous linear maps 
from the space $\mathcal{D}(\mathfrak{h})$ of $C^{\infty}$ densities 
with compact support on $\mathfrak{h}$ to $\mathcal{A}(W)$, 
where $\mathcal{D}(\mathfrak{h})$ 
and $\mathcal{A}(W)$ are equipped with the $C^{\infty}$ topologies. 
Therefore, if $\alpha \in C^{-\infty }(\mathfrak{h},\mathcal{A}(W))$ 
and if $\phi \in \mathcal{D}(\mathfrak{h})$ 
then $\langle \alpha ,\phi \rangle $ is a differential form on $W$ denoted by $\int_\mathfrak{h} \alpha(X)\phi(X)dX$. 
A $C^\infty $ density with compact support on $\mathfrak{h}$ is also called a test density, 
and a $C^{\infty}$ function with compact support on $\mathfrak{h}$ is called a test function. 
Denote by $E^i$ a basis of $\mathfrak{h}$ and $E_i$ its dual basis. 
Let $d$ be the operator on $C^{-\infty }(\mathfrak{h},\mathcal{A}(W))$ defined by 
$$\langle d\alpha , \phi \rangle =d\langle \alpha ,\phi \rangle, ~\forall \phi\in \mathcal{D}(\mathfrak{h}).$$
Let $\iota$ be the operator defined by
$$\langle \iota \alpha ,\phi \rangle = \sum\limits_i \iota(E^i_W)\langle \alpha ,E_i \otimes \phi \rangle ,$$
where $E^i_W$ means as before the vector field generated  
by $E^i\in \mathfrak{h}$ on $W$ and $(E_i \otimes \phi)(X)=E_i(X)\phi(X)=X_i\phi(X)$, for any $X=\sum X_i E^i \in \h$.
% and where $E_i\phi$ means the tensor product $E_i \otimes \phi $. 
Let then $d_\mathfrak{h}$ be the operator on $C^{-\infty }(\mathfrak{h},\mathcal{A}(W))$ defined by 
$$d_\mathfrak{h}\alpha=d\alpha - \iota \alpha .$$
The operator $d_\mathfrak{h}$ coincides with the equivariant differential on $C^{\infty }(\mathfrak{h},\mathcal{A}(W))\subset C^{-\infty }(\mathfrak{h},\mathcal{A}(W))$. The group $H$ acts naturally on $C^{-\infty }(\mathfrak{h},\mathcal{A}(W))$ by $\langle g\cdot \alpha , \phi \rangle = g \cdot \langle \alpha ,g^{-1} \cdot \phi \rangle $ and this action  commutes with the operators $d$ and $\iota $. The space of $H$-invariant generalized  functions on $\mathfrak{h}$ with values in $\mathcal{A}(W)$ is denoted by 
$$\mathcal{A}^{-\infty}_H(\mathfrak{h} , \mathcal{A}(W))=C^{-\infty }(\mathfrak{h},\mathcal{A}(W))^H.$$
The operator $d_\mathfrak{h}$ preserves $\mathcal{A}^{-\infty}_H(\mathfrak{h},W)$ and satisfies $d_\mathfrak{h}^2=0$.
Similarly, if we replace $\mathcal{A}(W)$ with $\mathcal{A}_c(W)$ the space of compactly supported forms then we can define $\mathcal{A}^{-\infty}_{c,H}(\mathfrak{h},W)=C^{-\infty}(\mathfrak{h},\mathcal{A}_c(W))^H$. \\
We also need to consider $H$-equivariant generalized forms which are defined on an open neighbourhood of the origin in $\mathfrak{h}$.
If $O$ is an $H$-invariant open subset of $\mathfrak{h}$, we denote by $\mathcal{A}^{-\infty }_{H}(O,W)$ and $\mathcal{A}^{-\infty }_{c,H}(O,W)$ the spaces obtained similarly.  
Let $U$ be a $H$-invariant open subset of $W$.  The space of forms with generalized coefficients and with support in $U$ is denoted by $\mathcal{A}^{-\infty}_U(O,W)$. This is the space of differential forms with generalized coefficients such that there is a $H$-invariant closed subspace $C_\alpha \subset U$ such that $\int \alpha (X)\phi (X) dX$ is supported in $C_\alpha$ for any test density $\phi$.

\begin{notation}
The cohomology of the complex $(\mathcal{A}^{-\infty}_H(\mathfrak{h},W) ,d_\mathfrak{h})$ is denoted by $\mathcal{H}^{-\infty}_H(\mathfrak{h},W)$.\\ 
The cohomology of the complex $(\mathcal{A}^{-\infty}_{c,H}(\mathfrak{h},W) ,d_\mathfrak{h})$ is denoted by $\mathcal{H}^{-\infty}_{c,H}(\mathfrak{h},W)$.\\
The cohomology of the complex $(\mathcal{A}^{-\infty}_{H}(O,W) ,d_\mathfrak{h})$ is denoted by $\mathcal{H}^{-\infty}_{H}(O,W)$.\\
The cohomology of the complex $(\mathcal{A}^{-\infty}_{c,H}(O,W) ,d_\mathfrak{h})$ is denoted by $\mathcal{H}^{-\infty}_{c,H}(O,W)$.\\
The cohomology of the complex $(\mathcal{A}^{-\infty}_{U}(O,W) ,d_\mathfrak{h})$ is denoted by $\mathcal{H}^{-\infty}_{U}(O,W)$.\\
Let $F \subset W$ be a closed subspace and let $\mathcal{H}^{-\infty}_{F}(\mathfrak{h}(s),W^s)$ 
be the projective limit of the projective system $(\mathcal{H}^{-\infty}_U(\mathfrak{h}(s),W^s))_{F\subset U}$.
\end{notation}

\noindent
There is a natural map $$\mathcal{H}^{\infty}_H(\mathfrak{h},W)\rightarrow \mathcal{H}^{-\infty}_H(\mathfrak{h},W)$$ induced by the inclusion $\mathcal{A}^{\infty}_H(\mathfrak{h},\mathcal{A}(W)) \hookrightarrow \mathcal{A}^{-\infty}_H(\mathfrak{h},\mathcal{A}(W))$.
If $p : M \rightarrow B$ is a oriented $H$-equivariant fibration, then integration along the fibres  
$\int_{M|B}$ defines a map from $\mathcal{A}^{-\infty}_{c,H}(\mathfrak{h},M)$ to $\mathcal{A}^{-\infty}_{c,H}(\mathfrak{h},B)$:
$$\langle \int_{M|B}\alpha , \phi \rangle := \int_{M|B}\langle \alpha ,\phi \rangle, ~\forall \phi \in \mathcal{D}(\mathfrak{h}),$$ and induces a well defined map:
$$\int_{M|B} :  \mathcal{H}^{-\infty}_{c,H}(\mathfrak{h},M)\rightarrow \mathcal{H}^{-\infty}_{c,H}(\mathfrak{h},B).$$
Finally note that if $\alpha \in \mathcal{H}^{\infty}_{c,H}(\mathfrak{h},M)$, and $\beta \in \mathcal{H}^{-\infty}_{c,H}(\mathfrak{h},B)$ then $\alpha \wedge p^* \beta \in \mathcal{H}^{-\infty}_{c,H}(\mathfrak{h},M)$ and
$$\int_{M|B}\alpha \wedge p^*\beta =(\int_{M|B}\alpha)\wedge \beta .$$

\section{The index of transversally elliptic families}\label{sec.cohomological.index}
In this section, we first recall the setting of \cite{baldare:KK} and refer to it for details. 
Then we describe the support of the distributional index of families of $H$-transversally elliptic operators 
introduced in \cite{baldare:H}. 
Let $H$ be a compact Lie group and let $p : Z \rightarrow B$ be a compact $H$-fibration with trivial action on $B$. 
We denote by $Z_b=p^{-1}(b)$ the fibre sitting above $b\in B$. We denote by $T(Z|B) := \ker(dp)$ the vertical subbundle of $TZ$.  
Let $\mathfrak{h}$ be the Lie algebra of $H$ 
and recall that an element $X\in \mathfrak{h}$ defines a vector field 
$X_Z(x):=\frac{d}{dt}_{|t=0} e^{-tX} x$ and that $X_Z(x) \in T_x(Z|B)$ is vertical. 
Using a $H$-invariant Riemannian metric on $Z$, we identify $T^*Z$ and $TZ$.

Let us recall the definition of the vertical $H$-transversal cotangent space $T_H^*(Z|B)$. 
Following \cite{atiyah1974elliptic}, we denote by 
$T^*_HZ :=\{(x,\alpha) \in T^*Z,\ \alpha(X_Z(x))=0,\ \forall X\in \mathfrak{h}\}$ 
and identify it with the set $T_HZ$ of vectors orthogonal 
to the orbits with the help of the $H$-invariant Riemannian metric. 
Similarly, we can consider $T^*_H(Z|B) :=\{(x,\alpha) \in T^*(Z|B),\ \alpha(X_Z(x))=0,\ \forall X\in \mathfrak{h}\}$ 
and we can identify it with the set $T_H(Z|B)$ of vertical tangent vectors orthogonal 
to the orbits using the $H$-invariant Riemannian metric. 
We then call  $T_H(Z|B):=T(Z|B) \cap T_HZ$ the vertical $H$-transversal tangent space.

Let $E=E^+ \oplus E^-\rightarrow Z$ be a $\Z_2$-graded hermitian vector bundle. 
In the sequel, we shall denote by $\Psi^m(Z|B,E)$ the set of smooth families of order $m$ pseudodifferential operators on $Z$ 
and by $\Psi^{-\infty}(Z|B,E)$ the smoothing families, see \cite{Atiyah-Singer:IV}.

We shall say that a $H$-invariant smooth family 
$A_0:=\big(A_{0,b} : C^\infty(Z_b, E^+_b) \rightarrow C^\infty(Z_b, E^-_b)\big)_{b\in B}$ 
of pseudodiferential operators is $H$-transversally elliptic if 
%$\forall b\in B$, 
its principal symbol $\sigma(A_0)(\xi)$ is invertible for any non zero vector $\xi\in T_H(Z|B)$, see \cite{baldare:KK}. 
Recall that every element $a\in K_{H}(T_H(Z|B))$ of the compactly supported $H$-equivariant $K$-theory group 
of $T_H(Z|B)$ can be represented 
by the principal symbol $\sigma(A_0)_{|T_H(Z|B)}$ of a $H$-invariant family $A_0$ of $H$-transversally elliptic operators. 
Let $A_0^*$ be the formal adjoint of $A_0$ and denote by $A:=\begin{pmatrix}
0&A_0^*\\
A_0&0
\end{pmatrix}$. 
We denote by $\maE=\maE^+\oplus \maE$ the Hilbert $C(B)$-module associated 
with the continuous field $\big(L^2(Z_b,E_b; \mu_b)\big)_{b\in B}$ of square integrable sections 
along the fibres with respect to a $H$-invariant continuous family of Borel measures $(\mu_b)_{b\in B}$ in the Lebesgue class. 
When $A_0$ is a family of order $0$ pseudodifferential operators, 
$A$ extends to an adjointable operator in $\mathcal{L}_{C(B)}(\maE)$. 
Let $C^*H$ be the $C^*$-algebra of the compact group $H$. 
Recall the representation $\pi : C^*H \rightarrow \mathcal{L}_{C(B)}(\mathcal{E})$ 
of $C^*H$ as adjointable operators on $\maE$ given by 
$\pi(\varphi)s(x) = \int_{H} \varphi(h) h(s(h^{-1} x)) dh$, 
where $\varphi \in C(H)$, $s\in C(Z,E)$ and the integration is with respect to the Haar measure on $H$. 
We shall denote the Kasparov's bivariant $K$-theory group  
of the pair of $C^*$-algebras $(C^*H,C(B))$ by $KK(C^*H,C(B))$, see \cite{Kasparov:KKtheory,Kasparov1988}.  

\begin{defi}\cite{baldare:KK}
The analytical index map 
$$\mathrm{Ind}^{Z|B} : K_{H}(T_H(Z|B)) \rightarrow KK(C^*H,C(B))$$
is defined by 
$$\mathrm{Ind}^{Z|B}([\sigma(A_0)_{|T_H(Z|B)}]):=[\maE,\pi,A].$$
\end{defi}

Denote by $\hat{H}$ the set of isomorphism classes of unitary irreducible representations of $H$. 

\begin{prop}
Let $H$ be a compact Lie group and $Z\rightarrow B$ be a compact $H$-fibration with trivial action on $B$. 
Then the analytical index map is a $R(H)$-morphism in the following sense 
%$$\mathrm{Ind}^{Z|B}(a\otimes [V])=Res_{H}^{H\times H} \big(\hat{\chi}_V \otimes \mathrm{Ind}^{Z|B}\big)=j^H[V] \underset{C^H}{\otimes} \mathrm{Ind}^{Z|B}(a), $$
$$\mathrm{Ind}^{Z|B}(a\otimes [V])=j^H[V] \underset{C^*H}{\otimes} \mathrm{Ind}^{Z|B}(a), $$
where $V\in \hat{H}$ and $[V]\in K_H(\C)$ is the corresponding element. 
\end{prop}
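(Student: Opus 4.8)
The plan is to establish the identity at the level of Kasparov cycles, reducing it to a statement about external products in $KK$-theory together with the compatibility of the index construction with twisting by a bundle coming from a representation. First I would recall that for $V \in \hat H$ the class $j^H[V] \in KK(\C, C^*H)$ (or $K_0(C^*H)$) is represented by the finitely generated projective $C^*H$-module obtained from the matrix coefficients of $V$, and that the Kasparov product $j^H[V] \otimes_{C^*H} [\maE, \pi, A]$ amounts to ``tensoring the module $\maE$ over $C^*H$ against $V$''. Concretely, if $a = [\sigma(A_0)]$ is represented by the $H$-transversally elliptic family $A_0$ acting on sections of $E = E^+ \oplus E^-$, then $a \otimes [V]$ is represented by the family $A_0 \otimes \id_V$ acting on sections of $E \otimes \underline{V}$, where $\underline{V} = Z \times V$ is the trivial bundle with diagonal $H$-action; this is just the usual external tensor product in equivariant $K$-theory restricted along the diagonal $Z \hookrightarrow Z \times \mathrm{pt}$.

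Next I would compute both sides explicitly. On the left, $\mathrm{Ind}^{Z|B}(a \otimes [V]) = [\maE \otimes_{\C} V, \pi \otimes \id, A \otimes \id_V]$, where now $\pi \otimes \id$ denotes the representation of $C^*H$ on the continuous field $(L^2(Z_b, E_b) \otimes V)_{b \in B}$ with $H$ acting diagonally — note this uses the fact that the operator $A \otimes \id_V$ is still a family of order-zero transversally elliptic pseudodifferential operators, since tensoring by a finite-dimensional bundle does not affect the principal symbol's invertibility on $T_H(Z|B)$. On the right, the Kasparov product $j^H[V] \otimes_{C^*H} [\maE, \pi, A]$ is computed by the standard recipe: one takes the internal tensor product of the $C^*H$-$\C$-bimodule representing $j^H[V]$ with the $C^*H$-$C(B)$-module $\maE$, obtaining a $C(B)$-module isomorphic (as a Hilbert $C(B)$-module with its grading and operator) to $\maE \otimes_{\C} V$ with operator $A \otimes \id_V$; this is essentially the associativity/Frobenius-reciprocity statement for the regular representation, and it is where the central computation lives.

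The main obstacle, and the step I would spend the most care on, is identifying the internal Kasparov product $j^H[V] \otimes_{C^*H} \maE$ with the honest tensor product $\maE \otimes_\C V$ as Hilbert $C(B)$-modules with compatible $A$-operators — i.e. checking that the obvious algebraic map is an isometry with dense range and intertwines the operators and gradings. Here one uses the Peter--Weyl decomposition: $C^*H \cong \bigoplus_{W \in \hat H} \End(W)$, under which $j^H[V]$ is the projection onto the $\End(V)$-summand, and $\pi$ decomposes $\maE$ into isotypical components $\maE \cong \bigoplus_W \Hom_H(W, \maE) \otimes W$; then $j^H[V] \otimes_{C^*H} \maE \cong \Hom_H(V, \maE) \otimes V$, which one identifies with $\maE \otimes_\C V$ via the unit/counit of the induction-restriction adjunction (both carry the action $A$ and the grading in the evident compatible way). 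Once this module identification is in place, one verifies that the Kasparov operator on the product, constructed from a connection, can be taken to be $A \otimes \id_V$ up to operator homotopy — this is immediate since $A$ already commutes with the $H$-action, hence with the $C^*H$-module structure, so the zero connection works and no genuine Kasparov-product technology is needed beyond the bookkeeping. I would then remark that functoriality in $B$ and additivity are clear, so the identity holds for all $a \in K_H(T_H(Z|B))$ by the density of symbol classes recalled above, completing the proof.
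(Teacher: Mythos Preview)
Your overall strategy---represent both sides by explicit Kasparov cycles and exhibit a unitary equivalence---is the same as the paper's, but the execution on the right-hand side contains a genuine error that makes the argument fail as written.

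The problem begins with the identification of $j^H[V]$. The map $j^H$ is Kasparov's descent $KK_H(\C,\C)\to KK(C^*H,C^*H)$, so $j^H[V]$ lives in $KK(C^*H,C^*H)$, not in $KK(\C,C^*H)\cong K_0(C^*H)$. (Were it the latter, the product $j^H[V]\otimes_{C^*H}\mathrm{Ind}^{Z|B}(a)$ would land in $KK(\C,C(B))$, whereas both sides of the identity must lie in $KK(C^*H,C(B))$.) Concretely, $j^H[V]$ is \emph{not} the central projection onto the $\End(V)$-block in the Peter--Weyl decomposition; rather it is represented by the $C^*H$-$C^*H$-bimodule $V\rtimes H$, i.e.\ the completion of $C(H,V)$ with right $C^*H$-valued inner product $\langle v_1,v_2\rangle(h)=\int_H\langle v_1(k),v_2(kh)\rangle\,dk$ and left action $\rho(\varphi)v(h)=\int_H\varphi(k)\,k\bigl(v(k^{-1}h)\bigr)\,dk$.

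Because of this misidentification, your Peter--Weyl computation yields the wrong module: tensoring the projection $p_V$ against $\maE$ over $C^*H$ gives the $V$-isotypical component $\Hom_H(V,\maE)\otimes V\subset\maE$, and this is certainly not isomorphic to $\maE\otimes_\C V$---no ``unit/counit'' argument can repair that, since the two sides have different sizes (one is a summand of $\maE$, the other is $\dim V$ copies of $\maE$). What the paper does instead is write down the explicit unitary
\[
U:\,V\rtimes H\;\underset{\pi}{\otimes}\;\maE\longrightarrow V\otimes\maE,\qquad U(v\otimes\eta)=\int_H v(k)\otimes k\eta\,dk,
\]
and check directly that it is well defined on the balanced tensor product, isometric with dense range, and intertwines $\rho\otimes_\pi 1$ with the diagonal representation $\pi_V$ as well as $1\otimes_\pi A$ with $\id_V\otimes A$. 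Your observation that $[\pi(C^*H),A]=0$ (so no connection machinery is needed and $1\otimes_\pi A$ is honest) is correct and is exactly what the paper uses; the missing ingredient is the right bimodule model for $j^H[V]$ and the concrete unitary $U$ above.
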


\begin{proof}
This is exactly the multiplicative property shown in \cite{baldare:KK} with $Z'=\{\star\} \rightarrow B'=\{\star\}$ and $H'=\{1\}$, see also \cite{Benameur.Baldare}.
Let us recall briefly the proof in this simpler case for the benefit of the reader. The index class
$\mathrm{Ind}^{Z|B}(a\otimes [V])$ is represented by $[V \otimes \maE , \pi_V ,  \id_V \otimes A]$,
where $\pi_V(\varphi) (v\otimes \eta)=\int_H \varphi(h) hv \otimes h\eta dh$, $\forall \varphi \in C(H)$, $v\in V$ and $\eta\in \maE$.
The Kasparov product
$j^H[V] \underset{C^*H}{\otimes} \mathrm{Ind}^{Z|B}(a)$ is represented by $[V\rtimes H \underset{\pi}{\otimes} \maE , \rho \otimes_{\pi} 1  , 1 \otimes_{\pi} A]$, 
where $V\rtimes H$ is the completion of $C(H,V)$ with respect to the $C^*H$-valued scalar product given by
$$\langle v_1, v_2 \rangle(h):=\int_H \langle v_1(k) , v_2(kh) \rangle dk,\ \qquad \forall v_1,v_2 \in C(H,V),$$
and $\rho (\varphi) v(h)=\int_H \varphi(k) k(v(k^{-1}h)) dk$, $\forall \varphi \in C(H)$ and $v \in C(H,V)$.
Notice that the operator $1\otimes_\pi A$ is well defined because $[\pi(C^*H),A]=0$ since $A$ is $H$-invariant. 
We then have a unitary equivalence between this two Kasparov modules given by the map
$U : V\rtimes H \underset{\pi}{\otimes} \maE \rightarrow V\otimes \maE$
defined by 
$$U(v \otimes \eta ) =\int_H v(k) \otimes k \eta dk, \ \forall v\in C(H,V), \eta \in \maE.$$
We can easily check that $U(v\cdot \varphi \otimes \eta)=U(v \otimes \pi(\varphi)\eta)$, $\forall v\in C(H,V), \varphi \in C(H),\eta \in \maE$.
Furthermore, for $v_1,v_2 \in C(H,V)$ and $\eta_1,\eta_2 \in \maE$, the identity 
$$\langle U(v_1 \otimes \eta_1),U(v_2 \otimes \eta_2) \rangle=\langle \eta_1 , \pi(\langle v_1,v_2\rangle) \eta_2 \rangle$$
can be checked as follows. Using the $G$-invariance of the scalar product on $\maE$, we have
\begin{align*}
\langle U(v_1 \otimes \eta_1),U(v_2 \otimes \eta_2) \rangle
&=\int_{H^2} \langle v_1(k),v_2(h)\rangle \langle k\eta_1 ,h \eta_2 \rangle dkdh\\
&=\int_{H^2} \langle \eta_1, \langle v_1(k) , v_2(h)\rangle k^{-1}h \eta_2 \rangle dkdh.
\end{align*}
The substitution $u=k^{-1}h$ gives directly
\begin{align*}
\langle U(v_1 \otimes \eta_1),U(v_2 \otimes \eta_2) \rangle
&= \int_{H^2} \langle \eta_1, \langle v_1(k) , v_2(ku)\rangle u \eta_2 \rangle dkdu\\
&=\int_H \langle \eta_1, \langle v_1 , v_2 \rangle (u) u\eta_2 \rangle du\\
&=\langle \eta_1 , \pi(\langle v_1,v_2\rangle) \eta_2 \rangle.
\end{align*}
To show that $U(C(H,V) \otimes_{\pi} \maE)$ is dense in $V \otimes \maE$, 
consider an approximate identity $(e_i)$ of $C^*H$, composed of
continuous functions on $H$ which are supported as close as we please to the neutral element of $H$.  
Then $U((v \otimes e_i) \otimes \eta)=v \otimes \pi(e_i)\eta$ converges to $v\otimes \eta$ for any 
$v\in V$ and $\eta \in \maE$. Similar computations also imply that $U$ intertwines operators and 
representations.
\end{proof}

Recall that Green-Julg isomorphism $K_H(\C) \cong KK(\C,C^*H)$ \cite{julg1981produit:croise} is given by 
$\theta\in \hat{H} \mapsto \chi_\theta = \chi_0\underset{C^*H}{\otimes} j^H([\theta]) \in KK(\C,C^*H)$, 
where $\chi_0=[\C,0] \in KK(\C,C^*H)$ and the Hilbert $C^*H$-module structure is given by 
$\langle\lambda ,\lambda'\rangle (g) =\bar{\lambda} \lambda'$ and $\lambda\cdot \varphi =\lambda \int_H\varphi(h)dh$,
where $\lambda,\lambda' \in \C$ and $\varphi \in C(H)$. 

Let $V\in \hat{H}$ and consider the Hilbert $C(B)$-module $\maE_V^H:=(V\otimes \maE)^H$ and the operator $A_V^H:=(\id_V \otimes A)\vert_{\maE_V^H} \in \maL_{C(B)}(\maE_V^H)$.
We can now introduce the definition of $\k$-multiplicity of an irreductible unitary representation of $H$ from \cite{baldare:KK}. 

\begin{defi}\cite{baldare:KK}
The $\k$-multiplicity $m_A(V)$ of a irreducible unitary representation $V$ of $H$ in the index class  
$\mathrm{Ind}^{Z|B} (A_0)$ is  the image of the class 
$[(\mathcal{E}_V^{H},A_V^{H})]\in KK(\mathbb{C},C(B))$ under the isomorphism 
$KK(\mathbb{C},C(B))\cong K(B)$. 
So $m_A(V)$ is the class of a virtual vector bundle over $B$, 
an element of the topological $\k$-theory group $K(B)$. 
The class $[(\mathcal{E}_V^{H},A_V^{H})]$ coincides (as expected) with the Kasparov product
$$
\chi_V\underset{C^*H}{ \otimes}\mathrm{Ind}^{Z|B}(A_0)\in KK(\mathbb{C},C(B)),
$$
where $\chi_V=\chi_0 \otimes j^H[V] \in KK(\C,C^*H)$ is the element image of $[V]\in K_H(\C)$ by the Green-Julg isomorphism.
\end{defi}

Since $KK(C^*H,C(B))\cong\mathrm{Hom}(R(H),K(B))$ (see for instance \cite{rosenberg1987kunneth}), 
we have the following description of the index map:

\begin{prop}\cite{baldare:H}
The index class of a $H$-invariant family $A_0$ of $H$-transversally elliptic operators is totally determined by its multiplicities and we have:
$$\mathrm{Ind^{M|B}}(A_0)=\sum\limits_{V\in \hat{H}}m_A(V)\chi_V.$$ 
\end{prop}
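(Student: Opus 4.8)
The plan is to transport the index class $\mathrm{Ind}^{M|B}(A_0)$ through the isomorphism $KK(C^*H,C(B))\cong\Hom(R(H),K(B))$ recalled above, recognise its image as the homomorphism $[V]\mapsto m_A(V)$, and then match that homomorphism term by term with $\sum_{V\in\hat H}m_A(V)\chi_V$.

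First I would record the concrete shape of the isomorphism. Since $H$ is compact, $C^*H\cong\bigoplus_{V\in\hat H}\End(V)$ as a $c_0$-direct sum, so $K_0(C^*H)\cong R(H)$ is free abelian on $\hat H$ and $K_1(C^*H)=0$; the universal coefficient theorem of \cite{rosenberg1987kunneth} then gives the natural isomorphism
$$\Theta:KK(C^*H,C(B))\xrightarrow{\ \sim\ }\Hom\big(R(H),K(B)\big)\cong\prod_{V\in\hat H}K(B),$$
which, by its very definition, is the evaluation pairing $\Theta(x)=\big([V]\mapsto\chi_V\underset{C^*H}{\otimes}x\big)$, where $\chi_V\in KK(\C,C^*H)$ is the Green--Julg class of $[V]\in R(H)$. (That the right-hand side is indeed a homomorphism out of $R(H)$ follows from additivity of Green--Julg, $\chi_{V\oplus W}=\chi_V+\chi_W$.)

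Next I would compute the image of the index class. By the very definition of the $\k$-multiplicity recalled above, one has $\chi_V\underset{C^*H}{\otimes}\mathrm{Ind}^{M|B}(A_0)=\big[(\maE_V^H,A_V^H)\big]=m_A(V)$ in $K(B)$, so $\Theta\big(\mathrm{Ind}^{M|B}(A_0)\big)$ is precisely the homomorphism $[V]\mapsto m_A(V)$. Since $\Theta$ is injective, this already yields the first assertion: the index class is completely determined by the family $(m_A(V))_{V\in\hat H}$. For the displayed formula, I would let $\chi_V$ denote also the element $\Theta^{-1}\big([W]\mapsto\delta_{VW}\cdot 1\big)\in KK(C^*H,C(B))$, that is, the pullback along the projection $C^*H\to\End(V)$ of the generator $1$ of $KK(\End(V),C(B))\cong K(B)$. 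Then $\Theta\big(m_A(V)\,\chi_V\big)$ is the functional $[W]\mapsto\delta_{VW}\,m_A(W)$, hence
$$\Theta\Big(\sum_{V\in\hat H}m_A(V)\,\chi_V\Big)=\big([W]\mapsto m_A(W)\big)=\Theta\big(\mathrm{Ind}^{M|B}(A_0)\big),$$
the sum being understood coordinatewise in $\prod_{\hat H}K(B)$, so that no convergence issue arises even when infinitely many multiplicities are nonzero. Applying $\Theta^{-1}$ gives $\mathrm{Ind}^{M|B}(A_0)=\sum_{V\in\hat H}m_A(V)\chi_V$.

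The one point that requires genuine care is formal: checking that the Rosenberg--Schochet isomorphism is implemented by the Green--Julg evaluation $x\mapsto(V\mapsto\chi_V\otimes x)$, and that this is compatible --- via associativity of $\underset{C^*H}{\otimes}$ --- with the Kasparov product used to define $m_A(V)$. As an alternative route that avoids the universal coefficient theorem, one can decompose the Hilbert $C(B)$-module $\maE$ into its $H$-isotypical summands, $\maE\cong\bigoplus_{V\in\hat H}V\otimes\maE_V^H$ with $A=\bigoplus_{V}\id_V\otimes A_V^H$ and $\pi$ acting through the associated block projections of $C^*H$; this presentation exhibits the Kasparov module $[\maE,\pi,A]$ directly as $\sum_V m_A(V)\chi_V$. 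Either way the remaining verifications are routine.
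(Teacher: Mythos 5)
Your proposal is correct and follows exactly the route the paper points to: the paper itself gives no proof of this proposition (it is cited from \cite{baldare:H}), merely noting that it follows from the identification $KK(C^*H,C(B))\cong\Hom(R(H),K(B))$, and you have filled in the details of that argument in the natural way, transporting the index class through the evaluation pairing and matching coordinates in $\prod_{\hat H}K(B)$. One small remark: in the alternative decomposition $\maE\cong\bigoplus_V V\otimes\maE_V^H$ you should be a little careful that with the paper's convention $\maE_V^H=(V\otimes\maE)^H$ the isotypical component of type $V$ is actually $V\otimes\maE_{V^*}^H$, so the indexing is by $V^*$ rather than $V$; this is only a relabeling of $\hat H$ and does not affect the conclusion, but it is worth flagging.
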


%Using this description of the analytical index map, we get the expected description of the fact that the index is a $R(H)$-module homomorphism.
The next proposition explains that the index map is a $R(H)$-module homomorphism, using the description of the index map from the previous proposition.

\begin{prop}
For any $a\in K_H(T_H(Z|B))$, we have 
$$\mathrm{Ind}^{Z|B}(a\cdot [V])=\sum \limits_{W\in \hat{H}} m_a(W)\chi_V\chi_W.$$
\end{prop}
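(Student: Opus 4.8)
The plan is to chain together the two facts just established in this section: that the index map $\mathrm{Ind}^{Z|B}$ is an $R(H)$-module morphism, and that an index class is determined by its multiplicities, $\mathrm{Ind}^{Z|B}(a)=\sum_{W\in\hat{H}}m_a(W)\chi_W$ (the previous proposition).

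First I would observe that the $R(H)$-module structure on $K_H(T_H(Z|B))$ is exactly $a\cdot[V]=a\otimes[V]$, the external tensor product with the class $[V]\in K_H(\C)=R(H)$; this is the operation appearing in the multiplicative property, so the proposition above stating that $\mathrm{Ind}^{Z|B}$ is an $R(H)$-morphism gives at once
$$\mathrm{Ind}^{Z|B}(a\cdot[V])=j^H[V]\underset{C^*H}{\otimes}\mathrm{Ind}^{Z|B}(a).$$
Next I would substitute the multiplicity decomposition $\mathrm{Ind}^{Z|B}(a)=\sum_{W}m_a(W)\chi_W$ and use that the left Kasparov product by the fixed class $j^H[V]$ is additive and commutes with the $K(B)$-valued coefficients $m_a(W)$ (associativity and $C(B)$-linearity of the Kasparov product), so that
$$\mathrm{Ind}^{Z|B}(a\cdot[V])=\sum_{W\in\hat{H}}m_a(W)\,\Big(j^H[V]\underset{C^*H}{\otimes}\chi_W\Big).$$

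It then remains to identify $j^H[V]\underset{C^*H}{\otimes}\chi_W$ with $\chi_V\chi_W$. Under the Green--Julg isomorphism $K_H(\C)\cong KK(\C,C^*H)$ the element $\chi_W$ is the image of $[W]$, the descent map $j^H$ is multiplicative, and this identification intertwines the product on $R(H)$ with left Kasparov multiplication on the $KK$-side; hence $j^H[V]\underset{C^*H}{\otimes}\chi_W$ is the image of $[V]\cdot[W]$, which is precisely what the notation $\chi_V\chi_W$ stands for. Plugging this back in yields the asserted formula.

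I expect the only genuinely delicate point to be this last identification: unwinding the meaning of the product $\chi_V\chi_W$ and checking that the Green--Julg isomorphism carries the $R(H)$-module structures over compatibly, so that left Kasparov product by $j^H[V]$ corresponds to multiplication by $[V]$. Everything else is formal --- bilinearity and associativity of the Kasparov product, together with the two propositions cited above.
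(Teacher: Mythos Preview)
Your argument is correct and uses exactly the same ingredients as the paper: the $R(H)$-morphism property $\mathrm{Ind}^{Z|B}(a\cdot[V])=j^H[V]\otimes_{C^*H}\mathrm{Ind}^{Z|B}(a)$, the Green--Julg description $\chi_W=\chi_0\otimes_{C^*H}j^H[W]$, and multiplicativity of $j^H$. The only difference is organizational. Rather than distribute $j^H[V]\otimes_{C^*H}(-)$ over the formal sum $\sum_W m_a(W)\chi_W$ directly, the paper pairs both sides against each $\chi_\theta$, showing
\[
m_{a\otimes V}(\theta)=\chi_{\theta\otimes V}\underset{C^*H}{\otimes}\mathrm{Ind}^{Z|B}(a)=m_a(\theta\otimes V)=\langle \mathrm{Ind}^{Z|B}(a),\chi_\theta\chi_V\rangle,
\]
and then expands $\theta\otimes V$ into irreducibles. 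This has the small advantage of avoiding any manipulation of the generally infinite sum $\sum_W m_a(W)\chi_W$ as if it were a literal element of $KK(C^*H,C(B))$: that sum is really shorthand for the corresponding element of $\mathrm{Hom}(R(H),K(B))$, so your ``distribute and identify term by term'' step is justified but deserves a word about what it means. Apart from this point of rigor the two proofs coincide.
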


\begin{proof}
Let $\theta \in \hat{H}$.  We have $\langle \mathrm{Ind}^{Z|B}(a\cdot [V]) , \chi_\theta \rangle = m_{a\otimes V}(\theta) \cong \chi_\theta \underset{C^*H}{\otimes} j^H([V])\underset{C^*H}{\otimes} \mathrm{Ind}^{Z|B}(a)$. Using Green-Julg isomorphism, it follows
$$\begin{array}{lll}\langle \mathrm{Ind}^{Z|B}(a\cdot [V]) , \chi_\theta \rangle
&= \chi_0\underset{C^*H}{\otimes} j^H([\theta])\underset{C^*H}{\otimes} j^H([V])\underset{C^*H}{\otimes} \mathrm{Ind}^{Z|B}(a)\\
&= \chi_0\underset{C^*H}{\otimes} j^H\big([\theta]\underset{\C}{\otimes} [V]\big)\underset{C^*H}{\otimes} \mathrm{Ind}^{Z|B}(a)\\
&=\chi_{\theta\otimes V}\underset{C^*H}{\otimes} \mathrm{Ind}^{Z|B}(a)\\
&= m_a(\theta\otimes V)\\
&=\langle \mathrm{Ind}^{Z|B}(a) , \chi_\theta\chi_V \rangle.
\end{array}$$ 
The last equality follows from the relations
\begin{align*}
\theta \otimes V &=\sum \limits_{W \in \hat{H}
%,\ finite
} \dim\big( (W^*\otimes (\theta\otimes V))^H \big) W,\\ 
m_a(\theta \otimes V)&= \sum \limits_{W \in \hat{H}
%,\ finite
} \dim\big( (W^*\otimes (\theta\otimes V))^H \big) m_a(W),\\
\langle \chi_W ,\chi_V\chi_\theta\rangle&= 
 \dim\big( (W^*\otimes (\theta\otimes V))^H \big).
\end{align*} 
 
\end{proof}

Let $C^{-\infty}(H)^{Ad(H)}$ be the set of $Ad(H)$-invariant distributions on $H$ and $\maH_{dR}^{ev}(B)$ be the even part of the de Rham cohomology. Assume $B$ oriented. It is shown in \cite{baldare:H} that there is a well defined map 
$$\mathrm{Ind}^{Z|B}_{-\infty} : K_{H}(T_H(Z|B)) \rightarrow C^{-\infty}(H)^{Ad(H)}\otimes \maH_{dR}^{ev}(B)\cong \maH^{-\infty,ev}_H(\h,B)$$
called the distributional index map given by
\begin{equation}\label{def:ind:distrib}
\mathrm{Ind}^{Z|B}_{-\infty}([\sigma(A_0)_{|T_H(Z|B)}])=\sum \limits_{V\in H} \Ch(m_A(V)) \chi_V,
\end{equation}
where $\Ch(m_A(V))\in \maH_{dR}^{ev}(B)$ is the usual Chern character of $m_A(V)$ and $\chi_V$ is the character of $V\in \hat{H}$.\\

We have the following generalisation of \cite[Theorem 4.6]{atiyah1974elliptic}.
\begin{lem}[localisation]\label{lem:localisation}
Let $H$ be a compact Lie group and $ Z\rightarrow B$ be a compact $H$-fibration with $B$ a $H$-trivial oriented manifold.
If $A_0$ is a family of $H$-transversally elliptic operators on $Z\rightarrow B$ then
$$\mathrm{supp} (\mathrm{Ind}^{Z|B}_{-\infty}(A_0))\subset \{h\in H, Z^h\neq \emptyset\} .$$
\end{lem}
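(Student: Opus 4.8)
The plan is to mimic the classical argument of Atiyah (\cite[Theorem 4.6]{atiyah1974elliptic}) and of Berline–Vergne, now carried out fibrewise over $B$. Fix $s\in H$ with $Z^s=\emptyset$; I want to show that the distribution $\mathrm{Ind}^{Z|B}_{-\infty}(A_0)$, which a priori takes values in $C^{-\infty}(H)^{Ad(H)}\otimes\maH^{ev}_{dR}(B)$, vanishes on a neighbourhood of the conjugacy class of $s$. By $Ad(H)$-invariance it suffices to work near $s$ itself, and by the restriction machinery of Theorem \ref{thm:descente:Harish:DufloVergne} it is enough to show that the restricted generalized function $\mathrm{Ind}^{Z|B}_{-\infty}(A_0)\|_s$ vanishes on a small neighbourhood $\maU_s(0)$ of $0$ in $\mathfrak{h}(s)$.

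First I would reduce the statement to a pointwise assertion about the $\k$-multiplicities via formula \eqref{def:ind:distrib}: since $\mathrm{Ind}^{Z|B}_{-\infty}(A_0)=\sum_{V\in\hat H}\Ch(m_A(V))\,\chi_V$, the value of this distribution at $s$ is governed by the trace of $s$ on each isotypical piece, i.e. by the virtual bundle $\sum_V \chi_V(s)\,m_A(V)\in K(B)\otimes\C$ together with its Chern character. So the target is to prove that this ``equivariant character evaluated at $s$'' is zero in $K(B)\otimes\C$ (or rather, that the associated germ of generalized function vanishes). The key input is that $m_A(V)$ is represented, for $A_0$ a family of order $0$, by the finite-dimensional virtual bundle $(\maE^H_V,A^H_V)$, and that the whole index class $[\maE,\pi,A]\in KK(C^*H,C(B))$ is built from the Hilbert $C(B)$-module $\maE$ and the $H$-invariant operator $A$. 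The decisive point, exactly as in \cite{atiyah1974elliptic}, is that when $Z^s=\emptyset$ the operator $A$ can be deformed, through $H$-transversally elliptic (order $0$, $H$-invariant) operators and within the fibrewise framework, so that $s$ acts \emph{freely enough} near the characteristic set: concretely, one uses that $T_H(Z|B)\cap\{(x,\xi): x\in Z^s\}=\emptyset$ to excise a neighbourhood of the fixed-point set, and a partition of unity/Mayer–Vietoris argument on $Z$ reduces the symbol class $a\in K_H(T_H(Z|B))$ to one supported away from any $s$-fixed locus; on such a piece the corresponding summand of the index is annihilated after pairing with a test density supported near $s$.

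Concretely, the steps in order: (1) choose an $H$-invariant open $U\supset Z^s$ with $\overline U$ still fixed-point-free for nearby elements $se^Y$, $Y\in\maU_s(0)$ small, using compactness of $Z$ and continuity of the $H$-action; (2) write $a=a_U+a'$ with $a_U$ supported in $T_H(Z|B)|_U$ and $a'$ supported in $T_H(Z|B)|_{Z\setminus Z^s}$, using additivity of the index map and excision in $K_H$; (3) for the piece $a'$ invoke the standard free-action vanishing: over the region where $s$ (hence $se^Y$ for small $Y$) acts without fixed points, the pushed-forward index germ at $s$ is zero — this is the fibrewise analogue of Atiyah's Lemma, proved either by an averaging argument over the $\langle s\rangle$-orbits or by noting that the relevant isotypical multiplicities, weighted by $\chi_V(se^Y)$, sum to zero as a germ at $Y=0$ because the module $\maE$ restricted there is induced; (4) for the piece $a_U$, shrink $U$ towards $Z^s$: since $Z^s=\emptyset$, we can take $U=\emptyset$, so this piece is absent and only $a'$ survives, giving vanishing near $s$; (5) translate back from $\mathfrak h(s)$ to $H$ via $\theta\mapsto\theta\|_s$ and re-invoke $Ad(H)$-invariance to conclude that the support of $\mathrm{Ind}^{Z|B}_{-\infty}(A_0)$ misses the whole class of $s$, hence is contained in $\{h\in H: Z^h\neq\emptyset\}$.

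The main obstacle I expect is step (3): making the ``free action $\Rightarrow$ vanishing of the index germ at $s$'' precise in the families setting. In Atiyah's original argument one uses that if $H$ acts freely near the support of the symbol then the transversally elliptic operator is (up to lower order) induced from the quotient $Z/H$, so its distributional index is smooth and supported at the identity, hence vanishes away from $1$; here one must check that this induction is compatible with the Hilbert $C(B)$-module structure and with the operator $A$, i.e. that the unitary-equivalence argument in the $R(H)$-module proposition above extends from $\hat H$-pieces to the genuinely free situation. An alternative, perhaps cleaner, route is to quote directly that the distributional index commutes with the $KK$-theoretic operations and then argue as in \cite{paradan2008index}: restrict the symbol class $a\in K_H(T_H(Z|B))$ along the inclusion of the fixed-point set (empty here) and use the excision/localisation theorem in equivariant $K$-theory to see that $a$ maps to $0$ under the composite $K_H(T_H(Z|B))\to K_{H(s)}(T_{H(s)}(Z^s|B))=K_{H(s)}(\emptyset)=0$, which by the functoriality of $\mathrm{Ind}_{-\infty}$ and the restriction Theorem \ref{thm:descente:Harish:DufloVergne} forces the germ of $\mathrm{Ind}^{Z|B}_{-\infty}(A_0)$ at $s$ to vanish. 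Either way, once step (3) is in hand the remaining bookkeeping — excision, partitions of unity, and the passage $\theta\mapsto\theta\|_s$ — is routine.
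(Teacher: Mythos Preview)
Your approach is quite different from the paper's, and as written it has a real gap. The paper follows Atiyah's original argument almost verbatim, and it is purely algebraic: given $h$ with $Z^h=\emptyset$, the element $h$ lies in no isotropy subgroup occurring in $Z$, so by \cite[Lemma 4.5]{atiyah1974elliptic} there is a virtual representation $[V]\in R(H)$ with $\chi_V(h)\neq 0$ but $\chi_V|_K=0$ for every isotropy group $K$. By \cite[Lemma 4.4]{atiyah1974elliptic} some power $[V]^N$ annihilates $K_H(Z)$, hence the $K_H(Z)$-module $K_H(T_H(Z|B))$. Since $\mathrm{Ind}^{Z|B}$ is an $R(H)$-module map (the propositions preceding the lemma), one gets $\chi_V^N\cdot\mathrm{Ind}^{Z|B}_{-\infty}(a)=0$ as a distribution, and $\chi_V^N(h)\neq 0$ forces $h\notin\mathrm{supp}(\mathrm{Ind}^{Z|B}_{-\infty}(a))$. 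No excision, no descent to $\mathfrak h(s)$, no Berline--Vergne machinery.

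In your scheme, by contrast, observe that since $Z^s=\emptyset$ your steps (1), (2), (4) are vacuous: there is nothing to excise, $a'=a$, and the entire content collapses onto step (3). But step (3) is precisely the lemma restated --- ``no $s$-fixed points $\Rightarrow$ index germ at $s$ vanishes'' --- and neither of your sketches proves it. Averaging over $\langle s\rangle$-orbits needs $s$ of finite order, and even then a free action of the \emph{cyclic} group $\langle s\rangle$ says nothing about the $H$-transversal index being supported at $1\in H$; the induction argument you invoke (``$\maE$ is induced'') requires a closed subgroup containing $s$ to act freely with a manifold quotient, which $Z^s=\emptyset$ alone does not provide. Your alternative route via restriction to $K_{H(s)}(T_{H(s)}(Z^s|B))=0$ is correct in spirit, but making the implication ``restriction of the symbol vanishes $\Rightarrow$ germ of the index vanishes'' rigorous in the families setting is essentially Theorem~\ref{thm:BV:familles}, which is far heavier than what is needed here and, in the paper's logical order, comes \emph{after} this lemma. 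The $R(H)$-module argument via separating characters is the missing idea.
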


\begin{proof}
The proof follows exactly the same line than Atiyah's proof \cite[Theorem 4.6]{atiyah1974elliptic}.
Let $Stab_H(Z)$ be the finite set of conjugacy classes of isotropy subgroup of $H$ for the action on $Z$. 
Let $h\in H$. If $Z^h=\emptyset$ then $h$ is not conjugate to any element belonging in $K\in Stab_H(Z)$. 
Therefore by \cite[Lemma 4.5]{atiyah1974elliptic}, there is $[V] \in K_H(\C)$ such that $\chi(h)\neq 0$ and $\chi_{|K}=0$, 
for any $K\in Stab_H(Z)$. Using \cite[Lemma 4.4]{atiyah1974elliptic}, 
we obtain $[V]^NK_H(Z)=0$ but $K_H(T_H(Z|B))$ is a unitary module on $K_H(Z)$ 
therefore $[V]^NK_H(T_H(Z|B))=0$. Since $\mathrm{Ind}^{Z|B}$ is a $R(H)=K_H(\C)$-homomorphism, 
it follows that $0= \mathrm{Ind}^{Z|B}(\chi^N\cdot a)= [V]^N\cdot \mathrm{Ind}^{Z|B}(a)$ 
and the same is true for the distributional index. 
Since $\chi^N(h)\neq 0$ this implies  $h\notin \mathrm{supp} (\mathrm{Ind}^{Z|B}_{-\infty}(A_0))$.
\end{proof}

\subsection{The Berline-Paradan-Vergne form of the index map for families}

Here we recall the main result of \cite{baldare:H}.  
We will not insist on the construction of the Chern character used in \cite{baldare:H} to proved the index theorem.
This is justified by the fact that in the sequel the vertical transversal space will define a vector bundle. 

Let us denote by $r : T^*(Z|B) \hookrightarrow T^*Z$ the inclusion induced by the Riemannian metric.
The Liouville $1$-form $\omega_Z$ on $T^*Z$ defines by restriction a $1$-form $r^*\omega_Z$ on $T^*(Z|B)$,
see \cite{baldare:H} for more details.
%More precisely, recall that we have fixed a $H$-invariant Riemannian metric $\langle \cdot , \cdot \rangle$ on $Z$. 
%Then write $TZ=T(Z|B)\bigoplus p^*TB$.
%Let $\pi : T^*Z \rightarrow Z $ be the projection, 
%let $j : T^*Z \rightarrow T^*(Z|B)$ be the dual map to the inclusion $i : T(Z|B)\hookrightarrow TZ$, 
%let $\phi : T^*Z \rightarrow TZ$ be the isomorphism given by the metric on $Z$ and 
%let $\phi^V : T^*(Z|B) \rightarrow T(Z|B)$ be the induced isomorphism. 
%Denote by $r=\phi^{-1}\circ i \circ \phi^V$ and $k=r \circ j$. 
%The Liouville $1$-form $\omega$ on $T^*Z$ is the $1$-form defined by 
%$\langle \omega(\xi ), \zeta \rangle = \langle \xi ,d\pi(\zeta)\rangle$, 
%where $\xi \in T^*Z$, $\zeta \in T_\xi(T^*Z)$ and $\pi : T^*Z \rightarrow Z$ is the projection. 
Assume $B$ oriented and $H$-trivial. It can be shown that the $1$-form $r^*\omega $ is $H$-invariant 
and that the subspace $C_{r^*\omega_Z }=\{\xi  \in T^*(Z|B),\ \langle r^*\omega_Z(\xi),X_{T^*(Z|B)}(\xi)\rangle=0, \ \forall X \in \h\}$ of $T^*(Z|B)$ is equal to $T_H^*(Z|B)$, see \cite{baldare:H} for instance. 

Let $\sigma$ be a $H$-transversally elliptic symbol along the fibres of $p : Z\rightarrow B $. 
We recalled above the definition of the distributional index 
$\mathrm{Ind}^{Z|B}_{-\infty}([\sigma ])\in C^{-\infty }(H,\maH^{ev}_{dR}(B))^{Ad(H)}$. 
We can restrict such element through its associated generalized function because such element 
belongs to $C^{-\infty}(H)^{Ad(H)}\otimes \maH^{ev}_{dR}(B)$. 

In the next theorem, we shall denote by $\Ch_{c}(\sigma ,r^*\omega, s)(Y)\in \maH^{-\infty}_{c,H}(\h, T(Z^s|B))$ 
the $s$-equivariant Chern character of a $H$-transversally elliptic morphism along the fibres, see \cite{baldare:H}  and\cite{paradan2008equivariant,paradan2008index} when $B=\ast$.
We denote by  $\hat{A}(T(Z|B),Y)\in \maH^\infty_{H}(\h,Z)$ the equivariant $\hat{A}$-genus of $T(Z|B)$, see \cite{BGV}. 

The main result of \cite{baldare:H} is the following theorem.

\begin{thm}\label{thm:BV:familles}\cite{baldare:H}
Let $\sigma$ be a $H$-transversally elliptic symbol along 
the fibres of a compact $H$-equivariant fibration $p : Z\rightarrow B $ with $B$ oriented and $H$-trivial. 
Denote by $N^s$ the normal vector bundle to $Z^s$ in $Z$. \\
1. There is a unique generalized function with values in the cohomology of $B$ denoted 
$$\mathrm{Ind}^{H,Z|B}_{coh} : K_H(T_H(Z|B)) \rightarrow C^{-\infty}(H , \maH^{ev}_{dR}(B))^{Ad(H)}$$ 
satisfying the following local relations:
$$\mathrm{Ind}^{H,Z|B}_{coh}([\sigma ] )\|_s(Y)=(2i\pi)^{-\dim (Z^s|B)}     \bigint_{T(Z^s|B)}    \hspace*{-0.5cm} \dfrac{\Ch_{c}(\sigma ,r^*\omega, s)(Y)\wedge \hat{A}^2(T(Z^s|B),Y)}{D_s(N^s,Y)},$$
$\forall s\in H$, $\forall Y \in \h(s)$ small enough such that the equivariant classes $\hat{A}^2(T(Z^s|B),Y)$ and $D(N^s,Y)$ are defined. \\
2. Furthermore, we have the following index formula:
$$\mathrm{Ind}^{H,Z|B}_{coh}([\sigma ] )=\mathrm{Ind}^{H,Z|B}_{-\infty}([\sigma ] )\in C^{-\infty}(H ,\maH^{ev}_{dR}(B,\mathbb{C}))^{Ad(H)}.$$
\end{thm}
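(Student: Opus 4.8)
The plan is to follow the strategy of Berline--Vergne and Paradan--Vergne for a single transversally elliptic operator \cite{BV:IndEquiTransversal,paradan2008index,paradan2008equivariant}, promoting each step to the family setting by carrying the cohomology-of-$B$ coefficients through the $KK$-theoretic index machinery of \cite{baldare:KK}. For Part 1, I would first check that, for each $s\in H$, the right-hand side of the asserted local relation is a bona fide $H(s)$-invariant generalized function on a neighbourhood $\maU_s(0)\subset\h(s)$ with values in $\maH^{ev}_{dR}(B)$: the integrand is integrable because $\Ch_{c}(\sigma,r^*\omega,s)$ is compactly supported along the fibres of $T(Z^s|B)\to B$, the denominator $D_s(N^s,Y)$ is invertible for $Y$ in a small enough neighbourhood of $0$ since $s$ acts on $N^s$ without nonzero fixed vectors (compare the positivity recalled above for $\det_{\mathfrak{q}(s)}(\id-Ad(se^Y))$), and the whole expression is manifestly $H(s)$-equivariant; naturality under conjugation $s\mapsto ksk^{-1}$ shows these germs are compatible along conjugacy classes. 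Uniqueness then follows from the descent theorem \ref{thm:descente:Harish:DufloVergne}: an $Ad(H)$-invariant generalized function on $H$ with values in $\maH^{ev}_{dR}(B)$ is determined by the family of its restrictions $\|_s$, $s\in H$, since these pin down its germ near every conjugacy class and $H$ is covered by the tubes $W(s,0)$.

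The substance is Part 2, namely that $\mathrm{Ind}^{H,Z|B}_{-\infty}(\sigma)$ itself obeys the local relations. Fix $s\in H$; by Lemma \ref{lem:localisation} there is nothing to prove unless $Z^s\neq\emptyset$. The first reduction is excision: using the multiplicativity and $R(H)$-module properties of the index map from \cite{baldare:KK} (the family refinement of Atiyah's arguments in \cite{atiyah1974elliptic}), the germ of $\mathrm{Ind}^{H,Z|B}_{-\infty}(\sigma)$ at $s$ depends only on $\sigma$ near the fixed-point set $Z^s$, so one may replace $Z$ by a tubular neighbourhood of $Z^s$, i.e. by the total space of the normal bundle $N^s\to Z^s$ with its fibrewise linear $H(s)$-action. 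The second reduction factorizes the symbol over $N^s$ as (a symbol pulled back from $Z^s$) $\otimes$ (the Bott symbol/Thom class of $N^s$); by multiplicativity this splits the computation into two pieces. The piece over the fibration $Z^s\to B$ (on which $s$ acts trivially) has germ at $Y=0$ equal to the $\h(s)$-equivariant family index over $Z^s\to B$, which by the Atiyah--Singer family index theorem \cite{Atiyah-Singer:IV} and its equivariant Chern--Weil form \cite{BGV} produces the factor $(2i\pi)^{-\dim(Z^s|B)}\int_{T(Z^s|B)}\Ch_{c}(\sigma,r^*\omega,s)(Y)\wedge\hat{A}^2(T(Z^s|B),Y)$ — the square of the $\hat{A}$-genus arising from the Todd genus of the complexified vertical tangent bundle, one factor being absorbed into the normalization of $\Ch_c$. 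The piece coming from the Bott symbol of $N^s$, since $s$ acts on $N^s$ without nonzero fixed vectors, is computed by Atiyah--Bott type localization to the inverse of the equivariant denominator $D_s(N^s,Y)$. Assembling the two gives exactly the stated local relation, and then the uniqueness of Part 1 forces $\mathrm{Ind}^{H,Z|B}_{coh}=\mathrm{Ind}^{H,Z|B}_{-\infty}$.

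I expect the main obstacle to be the combination of the $\hat{A}^2$ bookkeeping with the compatibility of the particular transversal equivariant Chern character $\Ch_{c}(\sigma,r^*\omega,s)$ of \cite{baldare:H} with both the excision step and the Bott-class factorization, all carried out in equivariant cohomology with generalized coefficients relative to $B$, i.e. in $\maH^{-\infty}_{c,H}(\h,T(Z^s|B))$ rather than with numbers. The base $B$ by itself introduces no new analytic difficulty, since the Atiyah--Singer family index theorem already performs the fibrewise integration over $Z\to B$; the real work is in organizing the equivariant localization fibrewise over $B$ and checking that the normalizations are consistent with the $(2i\pi)^{-\dim(Z^s|B)}$ scaling and with the passage from the full index to its restriction $\|_s$.
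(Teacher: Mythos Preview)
The paper does not prove this theorem: it is stated with the attribution \cite{baldare:H} and introduced by ``Here we recall the main result of \cite{baldare:H}''; no argument is given here. So there is no in-paper proof to compare your proposal against.

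That said, your outline is the expected strategy and is, as far as one can tell from the surrounding text and the references, the one carried out in \cite{baldare:H}: reduce to a neighbourhood of $Z^s$ by excision/localisation, factor via the Bott/Thom class of the normal bundle $N^s$, invoke the (equivariant) family index theorem on $Z^s\to B$, and glue the local germs using the descent of invariant generalized functions (Theorem~\ref{thm:descente:Harish:DufloVergne}). Two small cautions. First, the ``$\hat A^2$'' is not a bookkeeping accident to be absorbed into a normalisation of $\Ch_c$: in the Berline--Paradan--Vergne formalism one integrates over $T(Z^s|B)$ rather than $Z^s$, and the Todd class of $T(Z^s|B)\otimes\C$ genuinely contributes $\hat A(T(Z^s|B))^2$; your phrasing ``one factor being absorbed into the normalization of $\Ch_c$'' is misleading. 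Second, the delicate point you flag---compatibility of $\Ch_c(\sigma,r^*\omega,s)$ with excision and with the Bott factorisation in $\maH^{-\infty}_{c,H}$---is handled in the present paper (and in \cite{paradan2008equivariant,paradan2008index}) via the decomposition $\Ch_c(\sigma,r^*\omega,s)=\Ch_{\sup}(\sigma,s)\wedge r^*\operatorname{One}(\omega_s)$ recalled just after the theorem; this is the concrete mechanism that makes the multiplicativity step go through, and you should cite it rather than leave it as an ``expected obstacle''.
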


\begin{remarque}
The definition of the form $D_s(N^s,Y)\in \maH^\infty_H(\h,Z^s)$ can be found in \cite{BV:equiChernCaracter,BV:IndEquiTransversal,paradan2008index} but will not be needed in the sequel since under the assumptions of the next sections $N^s$ will be reduced to $Z \times 0$.
\end{remarque}

Outside of $T^*_HZ$, the $H$-equivariant form $\beta(\omega)=-i\omega \int_0^\infty e^{itD\omega} dt$
is well defined as a $H$-equivariant form with generalized coefficients, and
we have $D\beta(\omega)=1$ outside $T^*_HZ$, see \cite[Equation (15)]{paradan2008index}.
Let $U$ be a $H$-invariant open neighborhood of $T^*_HZ$ and 
let $\chi$ be a smooth $H$-invariant function on $T^*Z$ with support in $U$ 
and equal to $1$ in a neighborhood of $T^*_HZ$. 
Recall \cite[Proposition 3.11]{paradan2008index} that this allows 
to define a closed equivariant differential form on $T^*Z$, 
with generalized coefficients, and supported in $U$
\begin{equation*}
\operatorname{One}(\omega,\chi)=\chi + d\chi \beta(\omega)\in \maA^{-\infty}_U(\g,T^*Z).
\end{equation*}
Moreover, its cohomology class $\operatorname{One}_U(\omega)\in \maH^{-\infty}_U(\g,T^*Z)$ does not depend on $\chi$.

\begin{definition}\cite{paradan2008index}
The collection $(\operatorname{One}_U(\omega))$ defines an element $\operatorname{One}(\omega) \in \maH^{-\infty}_{T^*_HZ,H}(\h,T^*Z)$.
\end{definition}

\begin{remarque}\label{rem.one=1}
If $H=\{e\}$ then $T^*_HZ=T^*Z$ and $\operatorname{One}(\omega)=1$.
\end{remarque}

We denote by $\Ch_{\sup}(\sigma,s)(Y) \in \maH^\infty_{\operatorname{supp} (\sigma),H}(\h,T(Z|B))$ 
the $s$-equivariant Chern character of a vertical symbol $\sigma$ 
defined as in \cite[Definition 3.7]{paradan2008index}, see also \cite{baldare:H}.

\begin{prop}
Let $\sigma$ be a symbol which is $H$-transversally elliptic along the fibres of $Z \to B$. 
We have 
\begin{equation*}
\Ch_{c}(\sigma ,r^*\omega, s)(Y)=\Ch_{\sup}(\sigma,s)(Y) \wedge r^*\operatorname{One}(\omega_s) \in \maH^{-\infty}_{c,H}(\h,T(Z^s|B)),
\end{equation*}
where $\Ch_{c}(\sigma ,r^*\omega, s)(Y)$ is the $s$-equivariant Chern character defined in \cite{baldare:H} using \cite{paradan2008equivariant}.
\end{prop}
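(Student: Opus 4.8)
The plan is to compare the two constructions of the $s$-equivariant Chern character: the one obtained by the ``deformation by $r^*\omega$'' procedure of \cite{baldare:H,paradan2008equivariant} which produces directly a compactly supported class $\Ch_c(\sigma,r^*\omega,s)(Y)$ on $T(Z^s|B)$, and the product of the symbol Chern character $\Ch_{\sup}(\sigma,s)(Y)$ (supported on $\operatorname{supp}(\sigma)$ but not compactly supported in the fibre direction of $T(Z|B)$, only along $Z^s$) with the localising class $r^*\operatorname{One}(\omega_s)$. The essential point is that multiplying by $\operatorname{One}(\omega_s)$ cuts down the support to the intersection of $\operatorname{supp}(\sigma)$ with $T^*_HZ^s$, which for a transversally elliptic $\sigma$ is compact in each fibre of $T(Z^s|B) \to B$ (this is exactly the fibrewise analogue of Atiyah's compactness of $\operatorname{supp}(\sigma)\cap T^*_H Z$), so the right-hand side is a well-defined element of $\maH^{-\infty}_{c,H}(\h,T(Z^s|B))$ and it remains to identify it with the left-hand side.

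First I would recall, following \cite[Proposition 3.11]{paradan2008index} and its fibrewise version in \cite{baldare:H}, that on $T^*(Z|B)\setminus T^*_H(Z|B)$ one has the primitive $\beta(r^*\omega)=-i\, r^*\omega\int_0^\infty e^{itD(r^*\omega)}dt$ with $D\beta(r^*\omega)=1$, so that for a cutoff $\chi$ supported near $T^*_H(Z|B)$ and equal to $1$ there, the form $\operatorname{One}(r^*\omega,\chi)=\chi+d\chi\,\beta(r^*\omega)$ is a closed equivariant form with generalized coefficients representing $r^*\operatorname{One}(\omega_s)$, independent of $\chi$ in cohomology. Then I would take a representative $\Ch_{\sup}(\sigma,s)(Y)$ of the symbol Chern character (a closed $s$-equivariant form supported on $\operatorname{supp}(\sigma)$, with at most $C^\infty$ coefficients in $Y$), form the wedge product $\Ch_{\sup}(\sigma,s)(Y)\wedge\operatorname{One}(r^*\omega,\chi)$, and observe that this is closed (both factors are $D$-closed), has support in $\operatorname{supp}(\sigma)\cap\operatorname{supp}(\chi)$, and — after shrinking the support of $\chi$ — this support is fibrewise compact. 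The construction of $\Ch_c(\sigma,r^*\omega,s)(Y)$ in \cite{baldare:H} is by exactly this kind of deformation: one interpolates between a form supported on $\operatorname{supp}(\sigma)$ and one that is honestly compactly supported by gluing in the invertibility of $\sigma$ off $T^*_H(Z|B)$ via the primitive $\beta(r^*\omega)$. Hence the two classes are constructed by the same recipe and agree in $\maH^{-\infty}_{c,H}(\h,T(Z^s|B))$; the identity then follows by a homotopy/Stokes argument showing the difference of the two representatives is $D$-exact with compact fibrewise support.

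The concrete verification I would carry out is: (i) check that $\operatorname{One}(r^*\omega,\chi)$ restricted to $T^*(Z^s|B)$ equals $\operatorname{One}$ built from $\omega_s=r^*\omega|_{Z^s}$, which is the content of the subscript $s$ and follows because $r^*\omega$ restricts to the Liouville form of the fibration $Z^s\to B$ and $C_{r^*\omega_s}=T^*_H(Z^s|B)$; (ii) check $D$-closedness and the support statement, giving well-definedness of the right side; and (iii) produce the homotopy. For (iii) the standard device is to use a family $\chi_t$, $t\in[0,1]$, of cutoffs interpolating between the one implicit in $\Ch_c$ and an arbitrary one, and apply the fibrewise Stokes theorem for forms with generalized coefficients recalled in Section \ref{section:equiv:form}; the transgression form is the wedge of $\Ch_{\sup}(\sigma,s)$ with the transgression of $\operatorname{One}$, which has fibrewise compact support precisely on $\operatorname{supp}(\sigma)$ minus the zero section off $T^*_H(Z^s|B)$, i.e. where $\sigma$ is invertible.

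The main obstacle is (ii)–(iii): making rigorous that multiplication by the generalized-coefficient form $\operatorname{One}(\omega_s)$ is legitimate — it is not a form with smooth coefficients — and that the product has the claimed fibrewise-compact support so that integration along $T(Z^s|B)\to B$ later makes sense. This requires the careful wavefront/support bookkeeping for products of equivariant forms with generalized coefficients that underlies \cite{paradan2008index} and \cite{baldare:H}; one must know that $\Ch_{\sup}(\sigma,s)$ has smooth dependence on $Y$ so the product with the distribution-in-$Y$ form $\operatorname{One}(\omega_s)(Y)$ is defined, and that the supports in the base-point variable $\xi\in T^*(Z^s|B)$ intersect in a fibrewise-proper set. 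Once this transversality of supports is in hand, the cohomological identity is formal. I would therefore organise the proof as: recall $\operatorname{One}$ and $\Ch_{\sup}$; verify the support/smoothness hypotheses for forming the product; identify the product with the deformation defining $\Ch_c$ up to an explicit $D$-exact transgression term with fibrewise compact support; conclude equality in $\maH^{-\infty}_{c,H}(\h,T(Z^s|B))$.
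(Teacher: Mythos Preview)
Your outline is essentially correct and describes the mechanism behind the factorisation, but it is far more elaborate than what the paper does. The paper's proof is a one-line citation: the identity is exactly \cite[Theorem 3.22]{paradan2008equivariant} (see also \cite{paradan2008index}), applied to the vertical situation. In other words, Paradan--Vergne already proved that their compactly supported Chern character with a choice of one-form factorises as $\Ch_{\sup}\wedge\operatorname{One}$, and the proposition here is just the fibrewise instance of that theorem with $\omega$ replaced by $r^*\omega$ and $Z$ by $Z^s$.

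What you sketch --- the support bookkeeping for the product of a smooth-coefficient form with a generalized-coefficient form, the cutoff homotopy, the transgression argument --- is precisely the content of the proof of \cite[Theorem 3.22]{paradan2008equivariant}, so you are reproving an existing result rather than invoking it. There is nothing wrong with this, and your identification of the delicate point (that the product is well-defined because $\Ch_{\sup}$ has $C^\infty$ coefficients while $\operatorname{One}$ has generalized coefficients, and that the intersection of supports is fibrewise compact by transversal ellipticity) is exactly right. But in the context of this paper, where the Paradan--Vergne machinery is taken as a black box, the efficient move is simply to recognise the statement as a direct application of their factorisation theorem and cite it.
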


\begin{proof}
This follows directly from \cite[Theorem 3.22]{paradan2008equivariant}, see also \cite{paradan2008index}.
\end{proof}

\section{Transversal index for central extension by finite groups}\label{sec.gen.Paradan}

In this section, we generalize the setting from \cite{Paradan:projective} to the context of fibration. 
We recall that $B$ is assumed to be oriented.
Let $p : M\rightarrow B$ be a compact fibration. 
Let $G$ be a compact connected Lie group and $\pi : P \rightarrow M$ be a $G$-principal fibration. 
In particular, we get a compact fibration $p\circ \pi : P\rightarrow B$ and $G$ acts trivially on $B$ as in the previous section. 
As in \cite{Paradan:projective}, we consider a central extension 
$\xymatrix{1 \ar[r] & \Gamma \ar[r] &  \tilde{G} \ar[r]^{\zeta} & G \ar[r] &1}$ by a finite group $\Gamma$. 
In this context, $P \rightarrow B$ becomes a $\tilde{G}$-fibration when equipped 
with the action given by $\tilde{g}\cdot x=\zeta(\tilde{g})\cdot x$, 
for any $x\in P$ and $\tilde{g}\in \tilde{G}$. 
We denote simply by $\tilde{g}\cdot x = \tilde{g}x$ 
and  $g\cdot x = gx$ the actions of $\tilde{G}$ and $G$. \\

We denote by $\mathfrak{g}$ the Lie algebra of $G$ and similarly by $\tilde{\g}$ the Lie algebra of $\tilde{G}$. Notice that $\mathfrak{g}=\tilde{\mathfrak{g} }$ because $\Gamma$ is discrete.
Since the action of $G$ is free on $P$, the map $P\times \g \rightarrow TP$ is an isomorphism on its image. This implies that $T_GP=T_{\tilde{G}}P$ and $T_G(P|B)=T_{\tilde{G}}(P|B)$ are vector subbundles of $TP$. Clearly, the quotient maps by the $G$-action induce  isomorphisms $T_GP/G \cong TM$ and $T_G(P|B)/G \cong T(M|B)$. \\

We are interested in families of $\tilde{G}$-transversally elliptic operators on $P\rightarrow B$.
Let $\sigma\in K_{\tilde{G}}(T_G(P|B))$.
Using Lemma \ref{lem:localisation}, we know that $\mathrm{supp} (\mathrm{Ind}^{P|B}_{-\infty}(\sigma)) \subset \{\tilde{g}\in \tilde{G},\ P^{\tilde{g}}\neq \emptyset\}=\Gamma$. It follows that we can write 
$$\mathrm{Ind}^{P|B}_{-\infty}(\sigma)=\sum\limits_{\gamma\in \Gamma} Q_\gamma(\sigma),$$
where $Q_\gamma(\sigma)\in C^{-\infty}(\tilde{G})^{\tilde{G}}\otimes \maH_{dR}^{ev}(B)$ is supported in $\gamma \in \Gamma$. Using Proposition \ref{prop:PBW}, we obtain that there is $T_\gamma(\sigma) \in \maZ(\mathfrak{g})\otimes \maH_{dR}(B)$ such that $Q_\gamma(\sigma)=T_\gamma(\sigma)\ast \delta_\gamma$, compare with \cite{yu1991cyclic,douglas1995index}. With this in mind, our next goal is to determine $\exp_*^{-1}\otimes \id_{\maH_{dR}(B)}(T_\gamma(\sigma))$.

\subsection{Vertical twisted Chern character}\label{sec.vert.twist.Chern}

Let $E_1,\cdots , E_r$ be an orthonormal basis of $\g$ and 
let $\theta = \sum \theta_i \otimes E_i \in (\maA^1(P) \otimes \g)^G$ be a connection $1$-form on $P\rightarrow M$. 
We denote by $\Theta=\sum \Theta_i \otimes E_i \in (\maA^{ev}_{hor}(P)\otimes \g)^G$ its curvature,
where $\maA^{ev}_{hor}(P)$ is the algebra of horizontal forms of even degree on $P$.
We shall denote by $X_1, \cdots, X_r$ coordinates in the basis $(E_i)$. 
Recall that the Chern-Weil morphism $CW : S(\g)^G \rightarrow \maA(P)^{ev}_{hor}\cong \maA^{ev}(M)$ is given by 
$CW(P)(\Theta)=P(\Theta_1,\cdots, \Theta_r)$ and that 
this can be extended to $C^\infty(\g)^G$ using a Taylor expansion at $0$. 
Let us recall what this means. 
Denote as before $(X^\alpha)^*=(X_1^*)^{\alpha_1}\cdots (X_n^*)^{\alpha_r}$ 
the induced differential operator on $G$ by the monomial $X^\alpha=X_1^{\alpha_1}\cdots X_n^{\alpha_r}$, 
where $\alpha=(\alpha_1,\cdots , \alpha_r)\in \N^r$ is a multi-index. 
Let $\varphi \in C^\infty(\g)^G$ and write 
$\varphi(X_1,\cdots ,X_r)=\sum  \limits_{|\alpha|\leq \dim P} \dfrac{(X^\alpha)^*(\varphi)(0)}{\alpha !} X^\alpha + o(|X|^{\dim P})$, 
with $|\alpha|=\sum \alpha_i$ and $\alpha ! = \alpha_1 !\cdots \alpha_r!$
then $\varphi(\Theta)=\sum \limits_{|\alpha|\leq \dim P} \dfrac{(X^\alpha)^*(\varphi)(0)}{\alpha !} \Theta^\alpha \in \maH_{dR}(M)$.\\
Using the identification of $S(\g)$ with $C^{-\infty}_0(\g)$, see Proposition \ref{prop:PBW}, the Chern-Weil morphism can be written 
$e^\Theta \ast \delta_0$, i.e. $\varphi(\Theta):=\langle e^\Theta \ast \delta_0 , \varphi \rangle_\g$. 
In the sequel, we will denote simply the Chern-Weil morphism by $e^\Theta$ using the previous convention. 

\begin{remarque}\label{rem.phiTheta=1}
If  $\varphi=1$ on a neighbourhood of $0$ then $\varphi(\Theta)=1$. 
\end{remarque}

\begin{defi}\cite{Paradan:projective}\label{def.eTheta} 
For any closed form $\alpha \in \maA_c(T(M|B))$ with compact support, 
the expression $\alpha \wedge e^\Theta$ defines an element in $C^\infty_0(\g) \otimes \maA_c(T(M|B))$. 
Denote by $\overline{\varphi}(X)=\int_G \varphi(Ad(g)X) dg$ the average of $\varphi \in C^\infty(\g)$
 with respect to the Haar measure on $G$. Then $\int_{T(M|B) | B} \alpha \wedge e^\Theta $ defines 
 an element in $C^{-\infty}_0(\g) \otimes \maA(B)$ by 
$$\left\langle \int_{T(M|B) | B} \alpha \wedge e^\Theta , \varphi \right\rangle_\g 
:= \int_{T(M|B) | B} \alpha \wedge\overline{\varphi}(\Theta),$$
for any $\varphi\in C^\infty(\g)$. 
\end{defi}

Following \cite{Paradan:projective} we now introduce the twisted Chern character $\Ch_\gamma(\sigma)$ 
of a $\tilde{G}$-transversally elliptic symbol along the fibres of $P \rightarrow B$. 
Since $\sigma$ is $\tilde{G}$-transversally elliptic along the fibres,
the intersection of its support and $T_G(P|B)$ is compact. 
Seen as a morphism over the manifold $T_G(P|B)$, $\sigma$ is then compactly supported 
therefore the Chern character $\Ch_c(\sigma , \gamma) \in \maH_{c,\tilde{G}}^\infty(\g, T_G(P|B))$ 
is well defined, see \cite{paradan2008equivariant,paradan2008index,Paradan:projective}. 
%by \ref{thm:chern:paradan}. 
Since the finite subgroup $\Gamma$ acts trivially on $P$, 
we have a canonical isomorphism between $\maH_{c,\tilde{G}}^\infty(\g, T_G(P|B))$ and $\maH_{c,G}^\infty(\g, T_G(P|B))$. 

\begin{defi}\cite{Paradan:projective}\label{def:Chern:twist}
Let $\maH_{dR,c}(T(M|B))$ denote the de Rham cohomology of $T(M|B)$ with compact support. 
The twisted Chern character $\Ch_\gamma(\sigma) \in \maH_{dR,c}(T(M|B))$ 
is defined as the image of $\Ch_c(\sigma , \gamma)$ under the Chern–Weil isomorphism
$\maH_{c,G}^\infty(\g, T_G(P|B)) \rightarrow \maH_{dR,c}(T(M|B))$
that is associated with the principal $G$-bundle $T_G(P|B) \rightarrow T(M|B)$.
\end{defi}

Let us recall an explicit construction for this Chern character \cite{Paradan:projective}. 
 
\begin{remarque}\label{rem:Paradan:chern}
Let $\Pi : T(P|B) \rightarrow P$ be the projection and $\sigma : \Pi^*E^+ \rightarrow \Pi^*E^-$ 
be a given $\tilde{G}$-transversally elliptic symbol along the fibres.
Let $\nabla^+$ be a $\tilde{G}$-equivariant connection on the vector bundle $E^+\rightarrow P$. 
The pull-back $\nabla^{\Pi^*E^+}:=\Pi^*\nabla^+$ is then a connection on $\Pi^*E^+$ 
viewed as a vector bundle on the manifold $T_G(P|B)$. 
Since $\mathrm{supp}(\sigma)\cap T_G(P|B)$ is compact, 
we can define on the vector bundle $\Pi^*E^- \rightarrow T_G(P|B)$ a connection $\nabla^{\Pi^*E^-}$ such that the relation
$\nabla^{\Pi^*E^-} = \sigma \circ \nabla^{\Pi^*E^+} \circ \sigma^{-1}$ holds outside a compact subset of $T_G(P|B)$.
We consider the equivariant Chern character, twisted by the central element $\gamma \in \Gamma$:
$$\Ch_\gamma^{\tilde{G}}(\sigma):=\Ch_\gamma(\nabla^{\Pi^*E^+}\oplus \nabla^{\Pi^*E^-}),$$
see \cite{BGV,paradan2008equivariant,baldare:H} and the references therein for more details.
\end{remarque}

\subsection{The index formula for central extensions by finite groups}
Let $\theta$ be a connection $1$-form on $\pi : P \rightarrow M$ 
and assume that the metric on $P$ is compatible with the decomposition 
$TP=T_GP \oplus P\times \g$ induced by the connection $\theta$. 
We denote by $\pi_1$ and $\pi_2$ the projections corresponding to the first 
and second factor in the decomposition $TP=T_GP \oplus P\times \g$. 
The differential map $d\pi$ restricted to the subbundle $T_GP$ 
coincides with the quotient map $q : T_GP \rightarrow TM$ by the $G$-action. 
Let $\nu\in \maA^1(P\times \g )^G$ be given by $\nu(x,X)(v,Y)=\langle \theta(x)v , X \rangle_\g$, 
where $(x,X)\in P\times \g$, $(v,Y)\in T_xP \times T_X\g=T_xP\times \g$ 
and $\langle \cdot , \cdot \rangle_\g$ is our metric on $\g$ compatible with the connection and the metric on $P$. 
Let $\omega_P$ and $\omega_M$ be respectively the Liouville $1$-form on $P$ and $M$. Recall that if $Z$ is a manifold 
then the Liouville $1$-form on $T^*Z$ 
is given in local coordinates $(q,p)$ by $\omega=-\sum p_i dq_i$.
In other words $\langle\omega(x,\xi),W\rangle=-\xi(d_x\pi(W))$ 
for any $(x,\xi)\in T^*Z$ and $W\in T_{(x,\xi)}(T^*Z)$. 
%
%\begin{thm}\cite{paradan2008equivariant,paradan2008index}\label{thm:PV:Liouville}
%We have
%$$\omega_P=(d\pi)^*\omega_M +\pi_2^*\nu.$$
%\end{thm}
%
With the notations $d\pi$, $\pi_2$ and the decomposition $T(P|B)=T_G(P|B)\oplus P\times \g$, 
from the previous section, we have the following result.

\begin{prop}\label{cor:decomp:Par}
Assume that the metric on $P$ is compatible with the metrics on $B$ and $M$. 
Denote by $r : T(P|B) \hookrightarrow TP$ the inclusion. Then
$$r^*\omega_P=r^*(d\pi)^*\omega_M -r^*\pi_2^*\nu.$$
Furthermore,
$$r^*\operatorname{One}(\omega_P)=r^*(d\pi)^*\operatorname{One}(\omega_M)\wedge r^*\pi_2^*\operatorname{One}(-\nu)\in \maH^{-\infty}_{\tilde{G},c}(\g,T(P|B)),$$
where $\operatorname{One}(\omega_M)\in \maH_{dR}(TM)$ and $\operatorname{One}(-\nu)\in \maH^{-\infty}_{\tilde{G},c}(\g,P \times \g)=\maH^{-\infty}_{G,c}(\g,P \times \g)$.
\end{prop}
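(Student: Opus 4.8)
The plan is to prove the two displayed identities separately, the first being a pointwise computation of $1$-forms and the second a consequence obtained by applying the functorial construction of $\operatorname{One}(\cdot)$.

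First I would establish the decomposition $r^*\omega_P = r^*(d\pi)^*\omega_M - r^*\pi_2^*\nu$. Working over a point $(x,\xi)\in T^*P$ with $x\in P$ and $\xi\in T_x^*P$, I use the compatibility of the metric with the splitting $TP = T_GP \oplus P\times\g$ to write $\xi$ as a sum of a piece dual to $T_GP$ and a piece dual to $P\times\g$; under the identification $T^*P\cong TP$ the latter piece is exactly what is recorded by $\pi_2$, while $d\pi$ collapses the $P\times\g$-summand and descends the $T_GP$-piece isomorphically onto $TM$ via $q$. Then I evaluate both sides on a tangent vector $W\in T_{(x,\xi)}(T^*P)$ using the local-coordinate formula $\langle\omega_Z(x,\xi),W\rangle = -\xi(d_x\pi_Z(W))$ recalled just before the proposition: the $T_GP$-component of $\xi$ paired against $d\pi(W)$ reproduces $r^*(d\pi)^*\omega_M$, and the $P\times\g$-component paired against $W$ reproduces $-r^*\pi_2^*\nu$ by the very definition $\nu(x,X)(v,Y)=\langle\theta(x)v,X\rangle_\g$. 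Restricting to vertical covectors via $r$ is harmless since all the maps involved ($d\pi$, $\pi_2$, the projections $\pi_1,\pi_2$) respect the vertical subbundle $T(P|B)=T_G(P|B)\oplus P\times\g$, which was noted in the previous subsection. This is essentially the fibred analogue of the corresponding computation in \cite{Paradan:projective}, so the main content is bookkeeping with the splitting.

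Next, for the second identity, I would invoke the general principle (from \cite[Proposition 3.11]{paradan2008index} and the discussion preceding the proposition) that $\operatorname{One}(\omega)$ depends only on the cohomology class of the transversal critical set and is multiplicative for a ``product'' decomposition of the relevant $1$-form. Concretely, with $r^*\omega_P = r^*(d\pi)^*\omega_M - r^*\pi_2^*\nu$ and the fact (recalled in the excerpt) that $C_{r^*\omega_P} = T^*_{\tilde G}(P|B)$ splits compatibly as the product of the critical set of $r^*(d\pi)^*\omega_M$ (which is all of $T(M|B)$ pulled back, since $G$ acts freely along the first factor and contributes nothing transversally there — giving $\operatorname{One}(\omega_M)=1$ type behaviour in the horizontal directions, hence an honest de Rham class $\operatorname{One}(\omega_M)\in\maH_{dR}(TM)$) and the critical set of $-r^*\pi_2^*\nu$ in the $P\times\g$ factor, the form $\operatorname{One}(\omega_P,\chi)$ constructed from a product cutoff $\chi = \chi_1\chi_2$ factors, modulo $d_\g$-exact terms, as the wedge $\operatorname{One}(\omega_M,\chi_1)\wedge\operatorname{One}(-\nu,\chi_2)$. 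Passing to cohomology and using the independence of the class on $\chi$ yields the claimed equality in $\maH^{-\infty}_{\tilde G,c}(\g,T(P|B))$, and the identification $\maH^{-\infty}_{\tilde G,c}(\g,P\times\g)=\maH^{-\infty}_{G,c}(\g,P\times\g)$ is just the triviality of the $\Gamma$-action recorded earlier.

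The main obstacle I anticipate is not the pointwise $1$-form identity but justifying the multiplicativity of $\operatorname{One}$ under this particular decomposition: one must check that $\beta(\omega_P) = -i\,\omega_P\int_0^\infty e^{itD\omega_P}\,dt$ and its interaction with the product cutoff really do split as expected, i.e. that the critical set $T^*_{\tilde G}(P|B)$ is the product of the two factors' critical sets and that the forms $\beta(\omega_M)$ and $\beta(-\nu)$ can be combined without extra correction terms — this requires that $D\omega_M$ and $D\nu$ ``live in complementary directions'' with respect to the splitting and that $D$ acts as a graded derivation compatibly. I would handle this exactly as the $B=\ast$ case is handled in \cite{paradan2008index,Paradan:projective} (the relevant statement there being the splitting of $\operatorname{One}$ for a direct sum), checking that every step goes through fibrewise over $B$, which it does since $B$ is $\tilde G$-trivial and everything is $C(B)$-linear in the families parameter.
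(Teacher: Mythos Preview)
Your proposal is correct, but the paper's proof is considerably shorter and avoids the work you outline. The key observation you are missing is that neither identity requires any fibrewise-over-$B$ analysis: both
\[
\omega_P=(d\pi)^*\omega_M-\pi_2^*\nu
\qquad\text{and}\qquad
\operatorname{One}(\omega_P)=(d\pi)^*\operatorname{One}(\omega_M)\wedge \pi_2^*\operatorname{One}(-\nu)
\]
are already known to hold on all of $T^*P$, with no reference to $B$, by \cite[Theorem 4.5]{paradan2008equivariant} (see also \cite[Section 4.1]{paradan2008index}) for the $G$-principal bundle $\pi:P\to M$. The paper then simply applies $r^*$ to both. Your pointwise computation for the first identity and your multiplicativity argument for $\operatorname{One}$ are exactly what goes into proving that cited theorem, so you are effectively reproving it in situ; this is fine but unnecessary. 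In particular, your anticipated obstacle---checking that the splitting of $\beta(\omega_P)$ and the product cutoff argument go through ``fibrewise over $B$''---does not arise, since the base $B$ plays no role whatsoever until the final restriction by $r^*$.
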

 
\begin{proof}
From \cite[Theorem 4.5]{paradan2008equivariant},
we have
$$\omega_P=(d\pi)^*\omega_M -\pi_2^*\nu,$$
and 
$$\operatorname{One}(\omega_P)=(d\pi)^*\operatorname{One}(\omega_M)\wedge \pi_2^*\operatorname{One}(-\nu),$$
see also \cite[Section 4.1]{paradan2008index}.
The result follows applying the restriction $r^*$.
\end{proof}

\begin{lem}
We have 
$$\hat{A}(T(P|B))^2(X)=(d\pi)^*\hat{A}(T(M|B))^2j_\g(X)^{-1}.$$

\end{lem}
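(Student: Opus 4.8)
The plan is to compare the vertical tangent bundles on both sides of the splitting $T(P|B)=T_G(P|B)\oplus (P\times\g)$ and track how the equivariant $\hat A$-genus behaves. First I would recall that the connection $\theta$ induces the $G$-equivariant (hence $\tilde G$-equivariant, since $\Gamma$ is discrete and acts trivially) isomorphism of vector bundles $T_G(P|B)\cong q^*T(M|B)=(d\pi)^*T(M|B)$, and that $P\times\g$ is the trivial bundle with the adjoint action fibrewise. So as $\tilde G$-equivariant bundles $T(P|B)\cong (d\pi)^*T(M|B)\oplus(P\times\g)$, and the equivariant $\hat A$-genus is multiplicative: $\hat A(T(P|B))(X)=(d\pi)^*\hat A(T(M|B))(X)\cdot \hat A(P\times\g)(X)$.

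Next I would compute $\hat A(P\times\g)(X)$ for the trivial bundle $\g$ with the adjoint action. The equivariant curvature of the trivial bundle equipped with a flat connection is just the moment map contribution $\mathrm{ad}(X)\in\End(\g)$ (up to the usual normalisation), so $\hat A(P\times\g)(X)=\det_\g\!\Big(\dfrac{\mathrm{ad}(X)/2}{\sinh(\mathrm{ad}(X)/2)}\Big)^{1/2}$. On the other hand $j_\g(X)=\det_\g\!\Big(\dfrac{1-e^{-\mathrm{ad}(X)}}{\mathrm{ad}(X)}\Big)$, and since $\mathrm{ad}(X)$ is skew-symmetric its nonzero eigenvalues come in conjugate imaginary pairs $\pm i\lambda$; for each such pair one checks $\dfrac{(1-e^{i\lambda})(1-e^{-i\lambda})}{(i\lambda)(-i\lambda)}=\dfrac{2-2\cos\lambda}{\lambda^2}=\Big(\dfrac{\sin(\lambda/2)}{\lambda/2}\Big)^2=\Big(\dfrac{\sinh(\mathrm{ad}(X)/2)}{\mathrm{ad}(X)/2}\Big)^2\Big|_{\text{that pair}}$. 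Hence $j_\g(X)=\det_\g\!\Big(\dfrac{\sinh(\mathrm{ad}(X)/2)}{\mathrm{ad}(X)/2}\Big)$, so $\hat A(P\times\g)(X)^2=j_\g(X)^{-1}$. Squaring the multiplicativity identity then gives exactly $\hat A(T(P|B))^2(X)=(d\pi)^*\hat A(T(M|B))^2(X)\, j_\g(X)^{-1}$.

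The main obstacle is bookkeeping the equivariant/Chern–Weil normalisations consistently: one must be careful that the connection $\theta$ is chosen so that the metric on $P$ splits as $TP=T_GP\oplus(P\times\g)$ (this is the standing hypothesis of the subsection), that the equivariant curvature of $P\times\g$ really reduces to $\mathrm{ad}(X)$ with no leftover horizontal curvature term once restricted along the fibres, and that the factor-of-$2$ conventions in $\hat A$ and in $d\exp$ match. Once the normalisations are pinned down, the statement reduces to the elementary eigenvalue identity above together with the multiplicativity of $\hat A$ under direct sums and its naturality under pullback by $d\pi$.
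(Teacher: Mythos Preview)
Your proposal is correct and follows essentially the same route as the paper: choose the direct-sum connection $q^*\nabla^{T(M|B)}\oplus d\otimes\id_\g$ on $T(P|B)=q^*T(M|B)\oplus(P\times\g)$, observe that the equivariant curvature splits as $q^*R^{T(M|B)}\oplus ad(X)$, and conclude by multiplicativity of $\hat A^2$ together with the eigenvalue identity relating $\det_\g\!\big(\tfrac{ad(X)/2}{\sinh(ad(X)/2)}\big)$ to $j_\g(X)^{-1}$. The paper is terser (it just invokes ``the adjoint action is orthogonal'' where you spell out the $\pm i\lambda$ pairing), and one small point of presentation: since the moment of $q^*T(M|B)$ vanishes, the factor $(d\pi)^*\hat A(T(M|B))$ is actually independent of $X$, so you may drop the $(X)$ there to match the statement.
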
 
  
\begin{proof}
Indeed, take on $T(P|B)=q^*(T(M|B))\oplus P\times \g$ the connection given 
by $\nabla^{T(P|B)} =q^*\nabla^{T(M|B)} \oplus d\otimes \id_\g$ where $\nabla^{T(M|B)}$ 
is a connection on $T(M|B)$ and $d$ is the de Rham differential on $P$. 
Then we have 
$$\mu^{T(P|B)}(X) = \maL^{T(P|B)}(X)-\nabla^{T(P|B)}_{X^*_P}=\maL^{P\times \g}(X)-\iota(X)d\otimes \id_\g=\id_P\otimes ad(X),$$
and the curvature of $\nabla^{T(P|B)}$ is $R^{T(P|B)}=q^*R^{T(M|B)}$ 
where $R^{T(M|B)}$ is the curvature of $\nabla^{T(M|B)}$. 
Denoting by $R_{\g}(X)= R^{T(P|B)} +  \mu^{T(P|B)}(X)$, we have by definition 
$$\hat{A}(T(P|B))^2(X)=\det\left(\dfrac{R_\g(X)}{e^{R_\g(X)/2}-e^{-R_\g(X)/2}} \right),$$
see \cite[Section 7.1]{BGV}. The result follows then easily from the relation 
$R_{\g}(X)=q^*R^{T(M|B)} \oplus \id_P\otimes ad(X)$ and the fact that the adjoint action is orthogonal.
\end{proof}  

We shall denote by $\dim(M|B):=\dim M - \dim B$ and $\dim(P|B):=\dim P - \dim B$.
    
\begin{thm}\label{thm.main.paradan}
Let $\sigma \in K_{\tilde{G}}(T_G(P|B))$, 
we have $\mathrm{Ind}^{P|B}_{-\infty}(\sigma)=\sum \limits_{\gamma \in \Gamma} T_\gamma(\sigma)\ast \delta_\gamma$, where
$$T_\gamma(\sigma)=
%(-2i\pi)^{-\dim (M | B)}
(2i\pi)^{-\dim (M | B)}\exp_*\Big(\int_{T(M|B)|B} \Ch_\gamma(\sigma)\wedge\hat{A}(T(M|B))^2\wedge e^\Theta\Big).$$
Here $\Ch_\gamma(\sigma)$ is the twisted Chern character, see Definition \ref{def:Chern:twist}.
\end{thm}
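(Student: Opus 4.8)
The plan is to apply Theorem \ref{thm:BV:familles} to the symbol $\sigma \in K_{\tilde G}(T_G(P|B))$ on the $\tilde G$-fibration $p\circ\pi : P\to B$, and then to push the resulting local formula through the Chern--Weil morphism associated with the principal bundle $P\to M$ in order to land on the base fibration $M\to B$. First I would observe that the support of $\mathrm{Ind}^{P|B}_{-\infty}(\sigma)$ is contained in $\Gamma$ by Lemma \ref{lem:localisation}, so that $\mathrm{Ind}^{P|B}_{-\infty}(\sigma)=\sum_{\gamma\in\Gamma}T_\gamma(\sigma)\ast\delta_\gamma$ with $T_\gamma(\sigma)\in\maZ(\g)\otimes\maH_{dR}(B)$ (using Proposition \ref{prop:PBW} and the Remark after it, since $\Gamma$ is central and finite). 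The whole content is therefore to compute the germ $\exp_*^{-1}\otimes\id(T_\gamma(\sigma))$ at $Y\in\g$ near $0$, i.e. the restriction $\mathrm{Ind}^{P|B}_{-\infty}(\sigma)\|_\gamma$.

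Fix $\gamma\in\Gamma$ and apply the local relation of Theorem \ref{thm:BV:familles}(1) with $s=\gamma$. Since $\gamma$ acts trivially on $P$, we have $P^\gamma=P$, so $Z^s|B=P|B$ and the normal bundle $N^\gamma$ is the zero bundle $P\times\{0\}$; hence $D_\gamma(N^\gamma,Y)=1$ and the denominator disappears. Thus
$$\mathrm{Ind}^{P|B}_{-\infty}(\sigma)\|_\gamma(Y)=(2i\pi)^{-\dim(P|B)}\int_{T(P|B)|B}\Ch_c(\sigma,r^*\omega_P,\gamma)(Y)\wedge\hat A(T(P|B))^2(Y).$$
Now I would substitute the three product decompositions established just above: for the Chern character, the Proposition preceding this theorem gives $\Ch_c(\sigma,r^*\omega_P,\gamma)=\Ch_{\sup}(\sigma,\gamma)\wedge r^*\operatorname{One}(\omega_P)$, and then Proposition \ref{cor:decomp:Par} splits $r^*\operatorname{One}(\omega_P)=r^*(d\pi)^*\operatorname{One}(\omega_M)\wedge r^*\pi_2^*\operatorname{One}(-\nu)$ along $T(P|B)=T_G(P|B)\oplus(P\times\g)$; for the $\hat A$-genus, the Lemma above gives $\hat A(T(P|B))^2(X)=(d\pi)^*\hat A(T(M|B))^2\, j_\g(X)^{-1}$. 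Feeding these in, the integral over $T(P|B)|B$ factors as a fibre integral over $T_G(P|B)|T(M|B)$ (the $\g$-directions, carrying $\operatorname{One}(-\nu)$ and the Jacobian factor $j_\g^{-1}$) followed by the integral over $T(M|B)|B$ of the descended forms $\Ch_\gamma(\sigma)\wedge\hat A(T(M|B))^2$.

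The descent step is where the Chern--Weil morphism enters and is, I expect, the main obstacle. Here $\Ch_{\sup}(\sigma,\gamma)$ is a $\tilde G$-equivariant (equivalently $G$-equivariant, by the canonical isomorphism noted before Definition \ref{def:Chern:twist}) class on $T_G(P|B)$, and integrating out the vertical $\g$-factor against $\operatorname{One}(-\nu)$ and $j_\g^{-1}$ implements exactly the passage $\maH^\infty_{c,G}(\g,T_G(P|B))\to\maH_{dR,c}(T(M|B))$ defining $\Ch_\gamma(\sigma)$; this is essentially the computation of Paradan carried out in \cite{paradan2008index,Paradan:projective}, and I would cite \cite[Theorem 4.5]{paradan2008equivariant} and \cite[Section 4.1]{paradan2008index} for the identity identifying the fibre integral of $\operatorname{One}(-\nu)\, j_\g^{-1}$ over $P\times\g$ with the Chern--Weil morphism $e^\Theta$ — more precisely, matching against a test function $\varphi\in C^\infty(\g)^G$ and using the averaging $\overline\varphi$ as in Definition \ref{def.eTheta}, the push-forward of $\Ch_{\sup}(\sigma,\gamma)(Y)\wedge\operatorname{One}(-\nu)\wedge j_\g(Y)^{-1}$ along the fibres of $T_G(P|B)\to T(M|B)$ equals $\Ch_\gamma(\sigma)\wedge e^\Theta$. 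One also checks the degree bookkeeping $\dim(P|B)=\dim(M|B)+\dim G$, with $\dim G = r$ accounting for the $\g$-integration, so the prefactor collapses from $(2i\pi)^{-\dim(P|B)}$ to $(2i\pi)^{-\dim(M|B)}$ after the appropriate power of $(2i\pi)$ is absorbed into the normalization of $\operatorname{One}(-\nu)$ and $e^\Theta$. Finally, reading the resulting equality through the isomorphism $\exp_*:S(\g)^{Ad(G)}\to\maZ(\g)$ of Proposition \ref{prop:PBW}(4) and the Remark after it identifies $T_\gamma(\sigma)$ with $(2i\pi)^{-\dim(M|B)}\exp_*\big(\int_{T(M|B)|B}\Ch_\gamma(\sigma)\wedge\hat A(T(M|B))^2\wedge e^\Theta\big)$, which is the claim.
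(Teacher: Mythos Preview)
Your approach is essentially that of the paper: apply Theorem \ref{thm:BV:familles} at $s=\gamma$ (where $P^\gamma=P$, $N^\gamma=0$, $D_\gamma=1$), split $r^*\operatorname{One}(\omega_P)$ via Proposition \ref{cor:decomp:Par}, split $\hat A(T(P|B))^2$ via the preceding lemma, and then push down along $T(P|B)\to T(M|B)$ using Paradan's computation of the $\g$-fibre integral of $\operatorname{One}(-\nu)$. Two small points deserve tightening.

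First, you never dispose of the factor $r^*(d\pi)^*\operatorname{One}(\omega_M)$. The paper handles this explicitly: since $G$ acts trivially on $M$ one has $C_{r^*\omega_M}=T(M|B)$, so $\operatorname{One}(\omega_M)=1$ by Remark \ref{rem.one=1}. Second, your placement of $j_\g(Y)^{-1}$ is off. This factor is a function of the \emph{equivariant} variable $Y\in\g$, not a form on the geometric $\g$-fibre of $T(P|B)\to T_G(P|B)$; it does not get ``integrated out'' with $\operatorname{One}(-\nu)$. In the paper the fibre integral is the identity $\int_\g\operatorname{One}(-\nu)=(2i\pi)^{\dim G}\,e^\Theta\ast\delta_0\,\theta_r\cdots\theta_1$ (this is \cite[Lemma 4.5]{paradan2008index}, not Theorem 4.5 of \cite{paradan2008equivariant}); the $\theta_r\cdots\theta_1$ then integrates to $1$ over the $G$-fibre of $T_G(P|B)\to T(M|B)$, and the power $(2i\pi)^{\dim G}$ accounts for the shift in the prefactor. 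The factor $j_\g(Y)^{-1}$ survives to the end and is precisely the Jacobian needed in Theorem \ref{thm:descente:Harish:DufloVergne} (with $\tilde G(\gamma)=\tilde G$, so $\mathfrak{q}(\gamma)=0$) to pass from the restriction $\mathrm{Ind}^{P|B}_{-\infty}(\sigma)\|_\gamma$ to the identification of $T_\gamma(\sigma)$ via $\exp_*$. With these two adjustments your argument is the paper's proof.
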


\begin{proof} 
Recall that we consider a central extension $\xymatrix{1 \ar[r] & \Gamma \ar[r] &  \tilde{G} \ar[r]^{\zeta} & G \ar[r] &1}$ 
by a finite group $\Gamma$ and therefore 
$\gamma \in \Gamma$ acts trivially on $P$ since $\widetilde{G}$ acts by 
$\tilde{g}\cdot p = \zeta(\tilde{g})p$. 
In particular, we have $P^\gamma=P$, $N^\gamma=P \times \{0\}$ and thus $D_\gamma(N^\gamma,X)=1$. 
We know that $\mathrm{Ind}^{P|B}_{-\infty}(\sigma)$ is supported in $\Gamma$.  
Let $\gamma \in \Gamma$.
Using Theorem \ref{thm:BV:familles}, we have 
$$\mathrm{Ind}^{P|B}_{-\infty}(\sigma)\|_\gamma(X)
=(2i\pi)^{-\dim (P |B)}\int_{T(P|B)|B} \Ch_\gamma(\sigma, X)\wedge r^*\operatorname{One}(\omega_P)\wedge \hat{A}(T(P|B))^2(X).$$
Since $\Ch_\gamma(\sigma ,X)$ is supported in $T_G(P|B)$ 
we have $\Ch_\gamma(\sigma ,X)=\pi_2^*\Ch_\gamma^{\tilde{G}}(\sigma)(X)$ and $r^*(d\pi)^*\operatorname{One}(\omega_M)=1$ because $C_{r^*\omega_M}=T(M|B)$, see Remark \ref{rem.one=1}. 
Therefore applying Corollary \ref{cor:decomp:Par}, we get 
\begin{align*}
\mathrm{Ind}^{P|B}_{-\infty}(\sigma)\|_\gamma(X)
&=(2i\pi)^{-\dim (P |B)}\displaystyle\int_{T(P|B)|B} \Ch_\gamma(\sigma ,X) \wedge r^*\pi_2^*\operatorname{One}(-\nu) \wedge (d\pi)^*\hat{A}(T(M|B))^2 j_\g(X)^{-1}\\
&=(2i\pi)^{-\dim (P |B)}j_\g(X)^{-1}\displaystyle\int_{T_G(P|B)|B} \Ch_\gamma^{\tilde{G}}(\sigma) \wedge (d\pi)^*\hat{A}(T(M|B))^2 \int_\g \operatorname{One}(-\nu). 
\end{align*}
But using \cite[Lemma 4.5]{paradan2008index}, 
$\displaystyle\int_\g \operatorname{One}(-\nu) =(2i\pi)^{\dim G} e^\Theta \ast \delta_0 \theta_r \cdots \theta_1$. 
Therefore, we obtain
$$\mathrm{Ind}^{P|B}_{-\infty}(\sigma)\|_\gamma(X)
=(2i\pi)^{-\dim (M |B)}j_\g(X)^{-1}\int_{T(M|B)|B} \Ch_\gamma(\sigma) \wedge \hat{A}(T(M|B))^2 e^\Theta \ast \delta_0.$$ 
Since $\tilde{G}(\gamma)=\tilde{G}$, the result follows from Theorem \ref{thm:descente:Harish:DufloVergne}.
\end{proof}

\begin{cor}\label{cor:Paradan:famille}
Let $\gamma \in \Gamma$ and $\varphi \in C^\infty(\tilde{G})$ be a function equal to $1$ on a neighbourhood of $\gamma$ with small enough support. Then
$$\langle\mathrm{Ind}^{P|B}(\sigma),\varphi \rangle_{\tilde{G}}=
%(-2i\pi)^{-\dim (M | B)}
(2i\pi)^{-\dim (M | B)}\int_{T(M|B)|B} \Ch_\gamma(\sigma)\wedge\hat{A}(T(M|B))^2.$$
\end{cor}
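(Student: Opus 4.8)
The plan is to deduce Corollary \ref{cor:Paradan:famille} directly from Theorem \ref{thm.main.paradan} by pairing the distributional index against the cutoff function $\varphi$ and carefully tracking what the convolution with $\delta_\gamma$ and the pushforward $\exp_*$ contribute. First I would recall that $\mathrm{Ind}^{P|B}(\sigma)$ and $\mathrm{Ind}^{P|B}_{-\infty}(\sigma)$ represent the same object (the former is the $KK$/$K$-theory class, the latter its Chern character valued generalised function), so that the pairing $\langle \mathrm{Ind}^{P|B}(\sigma),\varphi\rangle_{\tilde G}$ makes sense and equals $\langle \mathrm{Ind}^{P|B}_{-\infty}(\sigma),\varphi\rangle_{\tilde G}$; this is the content of Equation \eqref{def:ind:distrib} together with Theorem \ref{thm:BV:familles}. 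Then, by Theorem \ref{thm.main.paradan}, the distributional index is the finite sum $\sum_{\gamma'\in\Gamma} T_{\gamma'}(\sigma)\ast\delta_{\gamma'}$ with each $T_{\gamma'}(\sigma)\ast\delta_{\gamma'}$ supported at $\gamma'\in\Gamma$.

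Next I would use the localisation of supports: since $\varphi$ equals $1$ on a small enough neighbourhood of $\gamma$ and the points of $\Gamma$ are isolated (a finite group), after shrinking the support of $\varphi$ we may assume $\operatorname{supp}(\varphi)$ meets $\Gamma$ only in $\gamma$. Hence in the finite sum only the term $\gamma'=\gamma$ survives the pairing, giving
$$\langle \mathrm{Ind}^{P|B}(\sigma),\varphi\rangle_{\tilde G} = \langle T_\gamma(\sigma)\ast\delta_\gamma,\varphi\rangle_{\tilde G}.$$
Now $T_\gamma(\sigma) = (2i\pi)^{-\dim(M|B)}\exp_*\big(\int_{T(M|B)|B}\Ch_\gamma(\sigma)\wedge\hat A(T(M|B))^2\wedge e^\Theta\big)$ is, via Proposition \ref{prop:PBW}, an $Ad(\tilde G)$-invariant distribution supported at the identity $\Id\in\tilde G$; convolving with $\delta_\gamma$ translates its support to $\gamma$ (part 3 of Proposition \ref{prop:PBW}). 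Pairing a distribution supported at $\gamma$ against a function equal to $1$ near $\gamma$ extracts precisely its ``total mass'', i.e. the evaluation of the underlying distribution at the constant function $1$.

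The key computational point is then that evaluating $\exp_*(\cdot)$ against the constant function $1$ returns the degree-zero (in the Lie-algebra variable) part, which by Remark \ref{rem.phiTheta=1} and Remark \ref{rem.one=1}-type reasoning amounts to replacing the Chern--Weil class $e^\Theta$ by $1$ and dropping the $j_\g(X)^{-1}$ factor. More precisely: writing $\varphi$ near $\gamma$ as $1$, the pairing $\langle T_\gamma(\sigma)\ast\delta_\gamma,\varphi\rangle_{\tilde G}$ equals $\langle T_\gamma(\sigma),R_{\gamma^{-1}}\varphi\rangle$, and since $R_{\gamma^{-1}}\varphi$ is again $1$ near $\Id$, this is $\langle \exp_*(\cdots),\text{(function }=1\text{ near }0)\rangle$ up to the constant $(2i\pi)^{-\dim(M|B)}$. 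By Definition \ref{def.eTheta} and Remark \ref{rem.phiTheta=1}, $\langle \int_{T(M|B)|B}\alpha\wedge e^\Theta,\psi\rangle_\g = \int_{T(M|B)|B}\alpha\wedge\overline{\psi}(\Theta)$, and for $\psi\equiv 1$ near $0$ one has $\overline{\psi}=1$ near $0$, hence $\overline\psi(\Theta)=1$; this leaves exactly $\int_{T(M|B)|B}\Ch_\gamma(\sigma)\wedge\hat A(T(M|B))^2$, yielding the claimed formula.

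The main obstacle, and the step deserving the most care, is justifying that the pairing ``localises'' correctly and that evaluating the pushed-forward distribution against $1$ genuinely kills all positive-degree contributions in $X\in\g$ while being insensitive to the precise shape of $\varphi$ — one must check that shrinking $\operatorname{supp}\varphi$ does not change the pairing (this uses that $T_\gamma(\sigma)\ast\delta_\gamma$ is supported exactly at $\gamma$) and that the Duflo--Vergne restriction machinery of Theorem \ref{thm:descente:Harish:DufloVergne}, which was used in the proof of Theorem \ref{thm.main.paradan} to absorb $j_\g(X)^{-1}$, is compatible with this evaluation. Everything else is a bookkeeping of the constants $(2i\pi)^{-\dim(P|B)}$ versus $(2i\pi)^{-\dim(M|B)}$ already carried out inside Theorem \ref{thm.main.paradan}, so no new analytic input is required.
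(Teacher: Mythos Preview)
Your proposal is correct and follows essentially the same approach as the paper: apply Theorem \ref{thm.main.paradan}, use the support condition on $\varphi$ to isolate the single term $T_\gamma(\sigma)\ast\delta_\gamma$, and then invoke Remark \ref{rem.phiTheta=1} (equivalently, $\langle e^\Theta\ast\delta_0,\psi\rangle_\g=1$ in cohomology for $\psi\equiv 1$ near $0$) to eliminate $e^\Theta$. Your final paragraph's worry about the $j_\g(X)^{-1}$ factor is unnecessary, since that factor was already absorbed via Theorem \ref{thm:descente:Harish:DufloVergne} in the proof of Theorem \ref{thm.main.paradan} and does not appear in the expression for $T_\gamma(\sigma)$.
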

	
\begin{proof}
%We have $\langle \exp_*(X^\alpha \ast \delta_\gamma) , \varphi \rangle = \langle \exp_*(X^\alpha \ast \delta_e) , R_\gamma\varphi \rangle$, 
%where $R_\gamma$ is the right translation by $\gamma$ as before.  
%Thus the result follows from Remark \ref{rem.phiTheta=1} and Definition \ref{def.eTheta}.
If the support of $\varphi$ is small enough then the only element of $\Gamma$ contained in the support of $\varphi$ is $\gamma$.
Therefore, Theorem \ref{thm.main.paradan} gives
\begin{equation*}
\langle\mathrm{Ind}^{P|B}(\sigma),\varphi \rangle_{\tilde{G}}=\langle T_\gamma(\sigma)\ast \delta_\gamma , \varphi \rangle_{\tilde{G}},
\end{equation*}
 where $T_\gamma(\sigma)=(-2i\pi)^{-\dim (M | B)}\exp_*\Big(\displaystyle\int_{T(M|B)|B} \Ch_\gamma(\sigma)\wedge\hat{A}(T(M|B))^2\wedge e^\Theta\Big).$
 Since $\varphi$ is equal to $1$ around $\gamma$, we get the result because
  $\langle e^\Theta \ast \delta_0(X) , \varphi(\gamma e^X) \rangle_\g$ is equal to $1$ in cohomology.
\end{proof}	
	
Following \cite{Paradan:projective}, we consider the group $\hat{\Gamma}$ of characters of the finite abelian group $\Gamma$ 
and we decompose any $\tilde{G}$-transversally elliptic symbol along the fibres 
$\sigma\in C^\infty(T(P|B), \mathrm{Hom}(\Pi^*E^+, \Pi^*E^-))$ as $\sigma=\bigoplus_{\chi\in \hat{\Gamma}} \sigma_\chi$, 
where $\sigma_\chi\in C^\infty(T(P|B), \mathrm{Hom}(\Pi^*E^+_\chi, \Pi^*E^-_\chi))$ is a $\tilde{G}$-transversally elliptic symbol 
along the fibres on $P$. Here $E^\pm_\chi$ is the subbundle of $E^\pm$ where $\Gamma$ acts through the character $\chi$. 
From Definition \ref{def:Chern:twist}, it is obvious that the twisted Chern character $\Ch_\gamma(\sigma)$ admits the decomposition
$$\Ch_\gamma(\sigma)=\sum \limits_{\chi\in \hat{\Gamma}}\chi(\gamma) \Ch_e(\sigma_\chi),$$
see also \cite{Paradan:projective}.
We then obtain the following theorem, see again \cite[Theorem 4.3]{Paradan:projective} for the case $B=\ast$.
\begin{thm} 
Let $\sigma \in \k_{\tilde{G}}(T_G(P|B))$ with decomposition $\sigma=\bigoplus_{\chi\in \hat{\Gamma}} \sigma_\chi$. 
We have
$$\mathrm{Ind}^{P|B}_{-\infty}(\sigma)=\sum \limits_{(\chi,\gamma) \in \hat{\Gamma}\times \Gamma} \chi(\gamma)
T_e(\sigma_\chi)\ast \delta_\gamma,$$
where $T_e(\sigma_\chi)=(2i\pi)^{-\dim(M|B)}\exp_*\big(\displaystyle\int_{T(M|B)|B}  \Ch_e(\sigma_\chi)\hat{A}(T(M|B))^2 e^{\Theta} \big)$.
\end{thm}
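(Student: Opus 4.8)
The plan is to reduce the statement to Theorem~\ref{thm.main.paradan} by exploiting the $\hat\Gamma$-decomposition of the symbol and the linearity of all the ingredients involved. First I would recall that, since $\Gamma$ is a finite abelian group acting trivially on $P$, the bundles $E^\pm$ split $\tilde G$-equivariantly as $E^\pm=\bigoplus_{\chi\in\hat\Gamma}E^\pm_\chi$ into the isotypical components for the $\Gamma$-action, and correspondingly the transversally elliptic symbol decomposes as $\sigma=\bigoplus_{\chi\in\hat\Gamma}\sigma_\chi$ with each $\sigma_\chi\in K_{\tilde G}(T_G(P|B))$ a $\tilde G$-transversally elliptic symbol along the fibres. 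Since the distributional index map is additive in the symbol, we have $\mathrm{Ind}^{P|B}_{-\infty}(\sigma)=\sum_{\chi}\mathrm{Ind}^{P|B}_{-\infty}(\sigma_\chi)$, so it suffices to compute $T_\gamma(\sigma_\chi)$ for each $\chi$.

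The second step is to apply Theorem~\ref{thm.main.paradan} to each $\sigma_\chi$, which gives
$$\mathrm{Ind}^{P|B}_{-\infty}(\sigma_\chi)=\sum_{\gamma\in\Gamma}(2i\pi)^{-\dim(M|B)}\exp_*\Big(\int_{T(M|B)|B}\Ch_\gamma(\sigma_\chi)\wedge\hat A(T(M|B))^2\wedge e^\Theta\Big)\ast\delta_\gamma.$$
At this point the key input is the behaviour of the twisted Chern character on an isotypical summand. Since $\Gamma$ acts on $E^\pm_\chi$ precisely through the scalar character $\chi$, the action of a central element $\gamma\in\Gamma$ on $E^\pm_\chi$ is multiplication by the scalar $\chi(\gamma)$; feeding this into the local construction of $\Ch_\gamma$ (Remark~\ref{rem:Paradan:chern}), where the twisting enters through $\gamma$ acting on the fibres of $\Pi^*E^\pm$, one gets $\Ch_\gamma(\sigma_\chi)=\chi(\gamma)\,\Ch_e(\sigma_\chi)$. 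This is exactly the identity $\Ch_\gamma(\sigma)=\sum_{\chi}\chi(\gamma)\Ch_e(\sigma_\chi)$ already recorded in the excerpt, applied to a single summand. Substituting this into the formula above, the scalar $\chi(\gamma)$ factors out of the fibre integral and the $\exp_*$, yielding $T_\gamma(\sigma_\chi)=\chi(\gamma)\,T_e(\sigma_\chi)$ with $T_e(\sigma_\chi)=(2i\pi)^{-\dim(M|B)}\exp_*\big(\int_{T(M|B)|B}\Ch_e(\sigma_\chi)\wedge\hat A(T(M|B))^2\wedge e^\Theta\big)$.

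The final step is bookkeeping: summing over $\chi$ and $\gamma$,
$$\mathrm{Ind}^{P|B}_{-\infty}(\sigma)=\sum_{\chi\in\hat\Gamma}\sum_{\gamma\in\Gamma}\chi(\gamma)\,T_e(\sigma_\chi)\ast\delta_\gamma=\sum_{(\chi,\gamma)\in\hat\Gamma\times\Gamma}\chi(\gamma)\,T_e(\sigma_\chi)\ast\delta_\gamma,$$
which is the claimed formula. I expect the only genuine point requiring care to be the second step, namely verifying rigorously that $\Ch_\gamma$ is multiplicative with respect to the scalar $\Gamma$-action on an isotypical piece — i.e. tracing through the definition of the $\gamma$-twisted equivariant Chern character on the connection $\nabla^{\Pi^*E^+_\chi}\oplus\nabla^{\Pi^*E^-_\chi}$ and checking that the twist by $\gamma$ contributes precisely the global factor $\chi(\gamma)$ and nothing else; everything else (the splitting of $E^\pm$, additivity of the index, factoring a scalar through $\int$ and $\exp_*$) is routine. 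Since this multiplicativity is the content of the decomposition formula for $\Ch_\gamma(\sigma)$ stated just before the theorem, the proof essentially amounts to invoking that decomposition together with Theorem~\ref{thm.main.paradan} and rearranging the double sum.
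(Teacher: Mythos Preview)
Your proposal is correct and follows exactly the approach of the paper: apply Theorem~\ref{thm.main.paradan} to each $\sigma_\chi$, use the decomposition $\Ch_\gamma(\sigma_\chi)=\chi(\gamma)\Ch_e(\sigma_\chi)$ stated just before the theorem, and sum. The paper's own proof is a single sentence (``This follows using linearity and Theorem~\ref{thm.main.paradan}''), so your write-up is simply a more detailed version of the same argument.
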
	
	
\begin{proof}
This follows using linearity and Theorem \ref{thm.main.paradan}.
\end{proof}

Let us give now an example.
\begin{ex}
Let $ S^3 \rightarrow \mathbb{CP}^1=S^2$ be the Hopf $S^1$-principal fibration. 
Let $\theta=\frac{i}{2} \sum_{i=0}^1 z_kd\bar{z}_k -\bar{z}_k dz_k$ be the standard 
connection associated with the Fubini-Study metric on $\mathbb{CP}^1$.
Using the connection we can identify $T^*(S^3|S^2)=S^3 \times \R^*$ with the orthogonal 
to the horizontal bundle $H=\ker \theta$, i.e. $T^*(S^3|S^2) \cong \{\xi \in T^*P,\ \xi |_H=0\}$.
Recall that the identification is given by $\eta \to \eta \circ \theta$ for $\zeta \in \R^*$.
Denote by $r : T^* (S^3|S^2)\hookrightarrow T^*S^3$ the induced inclusion and by $\omega_{S^3}$ the Liouville $1$-form.
Then if $\zeta$ denote the coordinate in $\R^*$ and  $\pi^{S^3|S^2}  : T^*(S^3|S^2) \rightarrow S^3$ the bundle projection, 
we get $-r^*(\omega_{S^3})=\zeta(\pi^{S^3|S^2})^*\theta$ which will be denoted simply by $\zeta\theta$ 
since $T^*(S^3|S^2)=S^3\times \R^*$ is trivial.
Indeed, let $(x,\xi)\in T^*(S^3|S^2)=S^3 \times \R^*$ and $W \in T_{(x,\xi)}(T^*(S^3|S^2))$ then
$$-r^*(\omega_{S^3})(x,\xi)(W)=\langle r(\xi) , d\pi_{S^3}(dr(W)) \rangle.$$
Since $\pi \circ r$ is the bundle projection of $\pi^{S^3|S^2}  : T^*(S^3|S^2) \rightarrow S^3$, we get 
\begin{align*}
-r^*(\omega_{S^3})(x,\xi)(W)  &= \langle r(\xi),  d\pi^{S^3|S^2}(W) \rangle\\
& = \xi \circ \theta ( d\pi^{S^3|S^2}(W) ) \\
&=\xi(1) \theta ( d\pi^{S^3|S^2}(W) ).
\end{align*}
Consider now the fibrations $\pi : P=S^3 \times S^1 \rightarrow S^2\times S^1=M$ and $p : M=S^2 \times S^1 \rightarrow S^2=B$.
The Liouville $1$-form on $P$ is given by $p_1^*\omega_{S^3} + p_2^*\omega_{S^1}$, where $p_1 : S^3 \times S^1 \rightarrow S^3$ 
and $p_2 : S^3 \times S^1 \rightarrow S^1$ are the projections. We assume that $S^1$ acts on $P$ by its action on $S^3$. 
Let $\sigma \in K_{S^1}(T^*_{S^1}(P|B))$ and let us compute $\mathrm{Ind}^{P|B}_{-\infty}(\sigma)$. Notice that since 
we consider the free action of $S^1$ the index is supported in $e$. We follow \cite{paradan2008index}.
We have that $\hat{A}(T^*(P|B))^2(X)=1$ since the bundle $T^*(P|B)$ is trivial and the Lie bracket on $\R$ is $0$.
Therefore using that $\operatorname{One}(\omega_{S^1})=1$, we get
\begin{align*}
\mathrm{Ind}^{P|B}_{-\infty}(\sigma)\|_e(X)
&=(2i\pi)^{-2}\int_{T^*(P|B)|B} \Ch_e(\sigma, X)\wedge \operatorname{One}(r^*\omega_P)\\
&=(2i\pi)^{-2}\int_{S^3 \times T^*S^1|S^2} \Ch_e^{S^1}(\sigma) \int_{\R^*}\operatorname{One}(r^*\omega_{S^3}).
\end{align*}

%Now we know that $\Ch_e(\sigma, X)=p^*\Ch_e^{S^1}(\sigma)$ thus we only need to compute  

Let us compute $\int_{\R^*}\operatorname{One}(r^*\omega_P)$. Let $g \in C^\infty_c(\R)$ be equal to $1$ on a neighborhood of $0$.
Consider the function $\chi(\zeta)=g(\zeta^2)$ on $P\times \R^*$ and represent $\operatorname{One}(r^*\omega_{S^3})$ by
$$\operatorname{One}(r^*\omega_{S^3})=\chi + d\chi \wedge (-ir^*\omega_{S^3})\int_0^\infty e^{itD(r^*\omega_{S^3})} dt.$$  
Let us denote by $\hat{\phi}(\zeta)=\int_\R e^{i\zeta X} \phi(X) dX$ the Fourier transform 
of any smooth compactly supported function $\phi$ on $\R$. 
We now compute $\int_\R \left(\int_{\R^*} \operatorname{One}(r^*\omega_{S^3}) \phi(X) dX\right)$.
First notice that $D(r^*\omega_{S^3})=d(r^*\omega_{S^3})-\iota(X)(r^*\omega_{S^3})=-(d\zeta \wedge \theta + \zeta \Theta -\zeta X)$,
where $\Theta$ is the curvature of $\theta$. We then have for any compactly supported function $\phi$ on $\R$:
\begin{align*}
\int_{\R} \bigg( \int_{\R^*} \operatorname{One}(r^*\omega_{S^3})\bigg) \phi(X) dX 
&=\int_{\R^*} \int_{\R} d\chi (i \zeta \theta) \int_0^\infty e^{-it(d\zeta \wedge \theta +\zeta \Theta -\zeta X)} dt \phi(X) dX \\
&=\int_{\R^*} \int_{\R} d\chi (i \zeta \theta) \int_0^\infty (1 -it\zeta \Theta )e^{it\zeta X}  \phi(X) dt dX,
\end{align*}
because $\theta\wedge \theta =0$ and $ \Theta$ is a $2$-form on the $2$ dimensional manifold $S^2$.
It follows
\begin{align*}
\int_{\R} \bigg(\int_{\R^*} \operatorname{One}(r^*\omega_{S^3})\bigg) \phi(X) dX
&=\int_{\R^*}  d\chi (i \zeta \theta) \int_0^\infty (1 -it\zeta \Theta ) \hat{\phi}(t\zeta) dt  \\
&=i\theta\int_{\R^*} \int_0^\infty -2\zeta g'(\zeta^2) \zeta  (1 -it\zeta \Theta ) \hat{\phi}(t\zeta) dt d\zeta,
\end{align*}  
where the change of sign is due to $-d\zeta \wedge \theta= \theta \wedge d\zeta$ 
which give the orientation on the fibers of $T^*(S^3|S^2)\rightarrow S^2$ induced from the Liouville $1$-forms of $S^3$ and $S^2$.
We now use the substitution $\zeta_1=t\zeta$ to obtain 
\begin{align*}
\int_{\R} \bigg(\int_{\R^*} \operatorname{One}(r^*\omega_{S^3})\bigg) \phi(X) dX
&=i\theta\int_{\R^*} \int_0^\infty -2 g'(\frac{\zeta^2_1}{t^2}) \frac{\zeta_1}{t^2}  (1 -i\zeta_1 \Theta ) \hat{\phi}(\zeta_1) dt \frac{d\zeta_1}{t}\\
&=i\theta\int_{\R^*} \int_0^\infty  \frac{d}{dt}(g(\frac{\zeta^2_1}{t^2}))  (1 -i\zeta_1 \Theta ) \hat{\phi}(\zeta_1) dt d\zeta_1.
\end{align*}  
Since $g$ is compactly supported and equal to $1$ on a neighborhood of $0$ it follows
\begin{align*}
\int_{\R} \bigg(\int_{\R^*} \operatorname{One}(r^*\omega_{S^3})\bigg) \phi(X) dX
&=i\theta\int_{\R^*}   (1 -i\zeta_1 \Theta ) \hat{\phi}(\zeta_1) d\zeta_1,\\
&=2\pi i\theta (\phi(0)+\phi'(0)\Theta)\\
&=2\pi i\theta \phi(\Theta),
\end{align*} 
where we have used that $ \int_{\R^*} \hat{\phi}(\zeta) d\zeta=2\pi\phi(0)$ and $\int_{\R^*} -i\zeta\hat{\phi}(\zeta) d\zeta=2\pi\phi'(0)$.
Notice that $\theta$ can be integrated along the fibers of $S^3 \rightarrow S^2=\mathbb{CP}^1$ using the trivialisations 
$$t_0 : \C \times S^1 \rightarrow S^3\vert_{U_0},\ (\alpha, \beta) \to \frac{\beta}{\sqrt{1+|\alpha|^2}}(1,\alpha),$$
and
$$ t_1: \C \times S^1 \rightarrow S^3\vert_{U_1},\ (\alpha, \beta) \to \frac{\beta}{\sqrt{1+|\alpha|^2}}(\alpha,1),$$
where $U_i$ are the standard affine charts of $\mathbb{CP}^1$. More precisely, we have above $U_0$
\begin{align*}
\int_{U_0\times S^1|U_0} t_0^*(z_0 d\bar{z}_0-\bar{z}_0dz_0) 
&=\int_0^{2\pi} \bigg(\frac{e^{i\lambda}}{\sqrt{1+|\alpha|^2}}\frac{-ie^{-i\lambda}}{\sqrt{1+|\alpha|^2}}d\lambda
-\frac{e^{-i\lambda}}{\sqrt{1+|\alpha|^2}}\frac{ie^{i\lambda}}{\sqrt{1+|\alpha|^2}}d\lambda \bigg) \\
&=-\frac{4i\pi}{1+|\alpha|^2}, \\
\int_{U_0\times S^1|U_0} t_0^*(z_1 d\bar{z}_1-\bar{z}_1dz_1)
&=\int_0^{2\pi} \bigg(\frac{e^{i\lambda}\alpha}{\sqrt{1+|\alpha|^2}} \frac{-i\bar{\alpha}e^{-i\lambda}}{\sqrt{1+|\alpha|^2}}d\lambda
-\frac{\bar{\alpha}e^{-i\lambda}}{\sqrt{1+|\alpha|^2}}\frac{i\alpha e^{i\lambda}}{\sqrt{1+|\alpha|^2}}d\lambda \bigg)\\
&=-\frac{4i\pi|\alpha|^2}{1+|\alpha|^2},
\end{align*}
and therefore as expected
\begin{align*}
\int_{U_0\times S^1 |U_0 } \theta=\frac{ i}{2}\bigg(\frac{-4i\pi}{1+|\alpha|^2}+\frac{-4i\pi|\alpha|^2}{1+|\alpha|^2}\bigg)=2\pi. 
\end{align*}
Notice that $\bar{\phi} (\Theta)=\int_{S^1} \phi(Ad_z(X)) dz = \phi(\Theta)$ because $S^1$ is commutative.
All together this gives the result.  
\end{ex}

\begin{remarque}
\begin{enumerate}
\item To get concrete example, we can take any elliptic operator of positive order on $S^1$ and
consider it as a constant family of $S^1$-transversally elliptic operators on $S^3\times S^1$. 
%by tensoring by the identity of $S^3$ a family of $S^1$-transversally elliptic operators 
%on $P$ and its index is computed by the previous formula.
\item The same computation for the fibration $q : S^3 \rightarrow S^2$ computes the distributional index 
of the zero family of $S^1$-transversally elliptic operators $0 : C^\infty(S^3) \rightarrow 0$. In this case,
we have that $\operatorname{Ind}^{S^3|S^2}(0)=\sum_{n\in \Z} m_0(\C_n) \chi_n$, where $m_0(\C_n)=S^3 \times_{S^1} V$ and $\chi_n(z)=z^n$ is the character of the irreducible representation $\C_n$.
Therefore, using Chern-Weil isomorphism we get that $\Ch(m_0(\C_n))=\Ch(P\times \C_n)(\Theta)=e^{\mu_n(1)\Theta}$,
where $\mu_n(X)=ni=\mathscr{L}^{\C_n}(X) -X_{S^3}$ is the moment of the trivial connection $d$ on $P\times \C_n$.
Since $\mu_n(1)=in$, we get $\Ch(m_0(\C_n))=1+in \Theta$ and therefore 
$$\Ind^{S^3|S^2}_{-\infty}(0)=\sum_{n\in \Z}(1+in\Theta)\chi_n.$$
In other words, $\forall \phi \in C_c^\infty(\R)$ supported close enough from $0$, we have
\begin{align*}
\langle \Ind^{S^3|S^2}_{-\infty}(0)(e^{iX}) ,\phi(X) \rangle &= \int_{\R} \sum_{n\in \Z} (1+in\Theta) e^{inX} \phi(X)  dX,\\
&=2\pi \sum_{n\in \Z} (\hat{\phi}(n)+\Theta \hat{\phi'}(n)),\\
&=2\pi (\phi(0) + \Theta \phi'(0)),\\
&=\langle \Ind^{S^3|S^2}_{coh}(0)(e^{iX}) ,\phi(X) \rangle.
\end{align*}
\item We can consider a central extension of $S^1$ by $S^1$ given by $z\to z^n$ with kernel $\mathbb{Z}_n$. 
In this case, the computation done in the last example can be reproduced \emph{mutatis mutandis} 
to get the distributional coefficients at $\gamma \in \mathbb{Z}_n$. 
\end{enumerate}

\end{remarque}

\section{The index of families of projective operators} \label{sec.proj}
 
In this section, we extend the setting of \cite{MMS1,MMS2} to the case of families.
Let $\maH$ be a Hilbert space and denote by $\maK(\maH)$ the $C^*$-algebra of compact operators. 
Let us first recall the definitions of Azumaya bundles and projective bundles.
 
\begin{defi}\cite{MMS1,MMS3} An Azumaya bundle $\maA$ over a manifold $M$ is a vector bundle with
fibres which are Azumaya algebras and which has local trivializations reducing
these algebras to $M_N(\C)$. 
A projective vector bundle $E$ over $M$ is a projection valued section of $\maA \otimes \maK(\maH)$. 

\end{defi}

%\begin{defi}
%A projective vector bundle $E$ over $M$ is a projection valued section of $\maA \otimes \maK(\maH)$. 
%\end{defi}

Recall that the transpose Azumaya bundle $\maA^t$ is $\maA$ with multiplication reversed. 
Since the structure group of $\maA\otimes \maA^t$ acts by the adjoint representation $PU(N) \rightarrow PU(N^2)$ 
which lift canonically to a $U(N)$ action, the bundle  $\maA\otimes \maA^t$ is trivial as an Azumaya bundle, see \cite{MMS1}.

\begin{lem}\cite{MMS1}
Let $E_1$ and $E_2$ be projective bundles associated to $\maA$. 
Then the bundle $\mathrm{hom}(E_1,E_2)$ with fibres $\mathrm{hom}(E_{1x},E_{2x})$ at $x\in M$ is a vector bundle.
\end{lem}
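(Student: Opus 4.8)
The plan is to reduce the statement to the local triviality of ranges of smooth families of projections, combined with the fact recalled just before the lemma that $\maA\otimes\maA^t$ is trivial as an Azumaya bundle. First I would fix an open cover $\{U_\alpha\}$ of $M$ over which $\maA$ is trivialized by isomorphisms of bundles of algebras $\tau_\alpha\colon\maA|_{U_\alpha}\to U_\alpha\times M_N(\C)$. Write $e_i$ for the projection-valued section of $\maA\otimes\maK(\maH)$ defining $E_i$, so that $E_{ix}$ is the range of the finite-rank projection $e_i(x)$. Over $U_\alpha$ the trivialization identifies the fibre of $\maA\otimes\maK(\maH)$ with $M_N(\C)\otimes\maK(\maH)\cong\maK(\C^N\otimes\maH)$; reading $e_i$ through $\tau_\alpha$ produces a smooth family $e_i^{(\alpha)}\colon U_\alpha\to\maK(\C^N\otimes\maH)$ of finite-rank self-adjoint projections. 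Since the rank of $e_i^{(\alpha)}(x)$ is locally constant and nearby projections are conjugate by an explicit smooth family of unitaries (Kato's trick), the ranges of $e_i^{(\alpha)}$ assemble into a genuine smooth finite-rank vector bundle $E_i^\alpha\to U_\alpha$ with fibre $E_{ix}$ at each $x$.

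Next I would record how these local models match on overlaps. On $U_\alpha\cap U_\beta$ the transition $\tau_\alpha\circ\tau_\beta^{-1}$ equals $\mathrm{Ad}(g_{\alpha\beta})$ for a smooth map $g_{\alpha\beta}\colon U_\alpha\cap U_\beta\to U(N)$ lifting the $PU(N)$-valued transition cocycle of $\maA$; the lift is canonical only up to multiplication by a smooth $U(1)$-valued function, and on triple overlaps one has $g_{\alpha\beta}g_{\beta\gamma}g_{\gamma\alpha}=\lambda_{\alpha\beta\gamma}\cdot\mathrm{id}$ for a smooth $\lambda_{\alpha\beta\gamma}\colon U_\alpha\cap U_\beta\cap U_\gamma\to U(1)$, a \v{C}ech $2$-cocycle representing the Dixmier--Douady class of $\maA$. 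Setting $u_{\alpha\beta}:=g_{\alpha\beta}\otimes\mathrm{id}_\maH$ one gets $e_i^{(\alpha)}=\mathrm{Ad}(u_{\alpha\beta})\,e_i^{(\beta)}$ on $U_\alpha\cap U_\beta$, so $u_{\alpha\beta}$ restricts to an isomorphism of vector bundles $E_i^\beta\to E_i^\alpha$ over $U_\alpha\cap U_\beta$. Crucially, since $E_1$ and $E_2$ are projective bundles for the \emph{same} Azumaya bundle $\maA$, one and the same cocycle $(g_{\alpha\beta})$, hence one and the same $2$-cocycle $(\lambda_{\alpha\beta\gamma})$, governs the gluing of the $E_1^\alpha$ and of the $E_2^\alpha$.

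Finally I would glue the vector bundles $\mathrm{hom}(E_1^\alpha,E_2^\alpha)\to U_\alpha$. Over $U_\alpha\cap U_\beta$ the natural identification is $\phi\mapsto u_{\alpha\beta}\circ\phi\circ u_{\alpha\beta}^{-1}$, with $u_{\alpha\beta}$ restricted to the relevant subbundle on each side. On a triple overlap the composition of three such identifications sends $\phi$ to $(g_{\alpha\beta}g_{\beta\gamma}g_{\gamma\alpha}\otimes\mathrm{id})\,\phi\,(g_{\alpha\beta}g_{\beta\gamma}g_{\gamma\alpha}\otimes\mathrm{id})^{-1}=\lambda_{\alpha\beta\gamma}\,\phi\,\lambda_{\alpha\beta\gamma}^{-1}=\phi$, so the would-be transition functions for $\mathrm{hom}(E_1,E_2)$ genuinely satisfy the cocycle condition, the scalars $\lambda_{\alpha\beta\gamma}$ having cancelled. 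The $\mathrm{hom}(E_1^\alpha,E_2^\alpha)$ then patch to a global smooth vector bundle on $M$ with fibre $\mathrm{hom}(E_{1x},E_{2x})$, and independence of the cover and of the trivializations is checked in the usual way. The step that makes this work --- and that fails for a single $E_i$ --- is this cancellation of the twisting cocycle $(\lambda_{\alpha\beta\gamma})$ against the $(\lambda_{\alpha\beta\gamma}^{-1})$ coming from the dual factor, which is the concrete shadow of the triviality of $\maA\otimes\maA^t$ recalled just before the lemma; once that is in place, the remainder is the standard passage from a norm-continuous family of projections to a vector bundle together with routine patching, so I do not expect any serious obstacle.
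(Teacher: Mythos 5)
Your argument is correct, and it reaches the conclusion by an explicit \v{C}ech--cocycle computation: trivialize $\maA$ over a good cover, realize each $E_i$ locally as the range of a smooth family of finite-rank projections (hence a genuine vector bundle over each $U_\alpha$), lift the $PU(N)$-valued transition cocycle to $U(N)$, and observe that the resulting $U(1)$-valued $2$-cocycle $\lambda_{\alpha\beta\gamma}$, which obstructs gluing each $E_i$ into a vector bundle, cancels in the transition law for $\mathrm{hom}(E_1^\alpha,E_2^\alpha)$ because conjugation by a scalar is the identity. That is the same mechanism the paper and \cite{MMS1} rely on, but the paper packages it globally: it pulls everything back to the $PU(N)$-principal bundle $\maP_\maA$ of trivialisations of $\maA$, where $\tilde E_i=\pi^*E_i$ become honest $U(N)$-equivariant vector bundles, and notes that the conjugation action of $U(N)$ on $\mathrm{hom}(\tilde E_1,\tilde E_2)$ factors through $PU(N)$, so the bundle descends to $M$. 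The two arguments are two presentations of one fact --- the central circle acts trivially under conjugation --- but they buy slightly different things. The descent argument is coordinate-free, fits seamlessly with the rest of the paper (which works throughout with $SU(N)$-transversally elliptic families on $\maP_\maA$, obtained from projective families by exactly this pullback), and makes it immediate that $\mathrm{hom}(E_1,E_2)$ inherits any $PU(N)$-invariant extra structure (metrics, connections) needed later. Your cocycle version is more elementary and makes the role of the Dixmier--Douady class completely explicit, which is clarifying; the small price is that you should fix a good cover at the outset so that the $U(N)$-lifts $g_{\alpha\beta}$ exist globally on each double overlap --- a point worth stating, though it costs nothing since good covers always exist on a manifold.
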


Let $\pi : \maP_\maA \rightarrow M$ be the $PU(N)$-principal bundle of trivialisations of $\maA \to M$. 
Then the lift $\pi^*\maA$ of $\maA$ to $\maP_\maA$ is trivial, i.e. it is a homomorphism bundle. 
Let $E_1$ be a projective vector bundle. Then $\tilde{E}_1=\pi^*E_1$ is a finite dimensional vector bundle 
such that $\tilde{E}_1\subset \C^N\otimes \maH$ which is equivariant for the standard action of $U(N)$ on $\C^N$ 
interpreted as covering the action of $PU(N)$ on $\maP_\maA$. 
Let $E_2$ be an other projective vector bundle associated with $\maA$. 
Recall that the action of $U(N)$ on $\mathrm{hom}(\tilde{E}_1,\tilde{E}_2)$ is by conjugation. 
Therefore, $\mathrm{hom}(\tilde{E}_1,\tilde{E}_2)$ defines a $PU(N)$-equivariant vector bundles over $\maP_\maA$ 
which descends to a well defined vector bundle $\mathrm{hom}(E_1,E_2)$ on $M$.\\
Unfortunately, the ``big'' homomorphism bundle $\mathrm{Hom}(\tilde{E}_1,\tilde{E}_2)$ 
is only a projective vector bundle over $M^2=M\times M$ since it is associated with $\maA\boxtimes\maA^t$ over $M^2$. 
By the previous discussion, $\mathrm{Hom}(\tilde{E}_1,\tilde{E}_2)$ restricts to the diagonal
in  a vector bundle, reducing there to $\mathrm{hom}(\tilde{E}_1,\tilde{E}_2)$. \\
%Let us recall the extension of this vector bundle near to the diagonal \cite{MMS1}. 
Denote by $d$ the distance function associated with the Riemannian metric on $M$. 
Let $$N_\varepsilon : =\{(x,x')\in M^2,\ d(x,x')<\varepsilon\}.$$

Let $p : M \rightarrow B$ be a compact fibration as before. Let us recall the following fondamental result \cite{MMS1}.

%\begin{prop}\cite[Proposition 1]{MMS1} Given two projective bundles, $E_1$ and $E_2$, associated to a fixed
%Azumaya bundle and $\varepsilon > 0$ sufficiently small, the exterior homomorphism bundle 
%$\mathrm{Hom}( \tilde{E}_1, \tilde{ E}_2)$, descends from a neighborhood of the diagonal in $\maP_\maA \times \maP_\maA$ to a vector
%bundle, $\mathrm{Hom}^\maA(E_1, E_2)$, over $N_\varepsilon$ extending $\mathrm{hom}(E_1, E_2)$. For any three such bundles
%there is a natural associative composition law
%$$ \mathrm{Hom}^\maA_{(x'',x')}(E_2, E_3) \times \mathrm{Hom}^\maA_{(x,x')}(E_1,E_2 )\rightarrow \mathrm{Hom}^\maA_{(x,x'')}(E_1,E_3),$$
%given by $(a,a')\mapsto a\circ a'$ for any $(x'',x')$, $(x,x')\in N_{\varepsilon/2}$ which is consistent with the composition over the diagonal.
%\end{prop}

\begin{prop}\label{prop.ext.Hom}
Given two projective bundles, $E_1$ and $E_2$, associated to a fixed
Azumaya bundle and $\varepsilon > 0$ sufficiently small, the exterior homomorphism bundle 
$\mathrm{Hom}( \tilde{E}_1, \tilde{ E}_2)$ over $M_p^2:=M\times_B M = \{(x,x') \in M \times M,\ p(x)=p(x')\}$, descends from a neighborhood of the diagonal in $\maP_\maA \times_B \maP_\maA= \{ (z,z')\in \maP_\maA \times \maP_\maA,\ p(\pi(z))=p(\pi(z'))\}$ to a vector bundle, $\mathrm{Hom}^\maA(E_1, E_2)$, over $N_{\varepsilon,B}:=N_\varepsilon \cap M_p^2$ extending $\mathrm{hom}(E_1, E_2)$. For any three such bundles
there is a natural associative composition law
$$ \mathrm{Hom}^\maA_{(x'',x')}(E_2, E_3) \times \mathrm{Hom}^\maA_{(x,x')}(E_1,E_2 )\rightarrow \mathrm{Hom}^\maA_{(x,x'')}(E_1,E_3),$$
given by $(a,a')\mapsto a\circ a'$ for any $(x'',x')$, $(x,x')\in N_{\varepsilon/2,B}$ which is consistent with the composition over the units in $M_p^2$.
\end{prop}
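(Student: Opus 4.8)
The plan is to reduce the statement to the classical construction of Mathai--Melrose--Singer carried out fibrewise, and then to check that everything is compatible with the fibration $p : M \to B$. The key point is that $\mathrm{Hom}(\tilde E_1, \tilde E_2)$ over $\maP_\maA \times_B \maP_\maA$ carries a $PU(N)\times PU(N)$-action, but the diagonal $PU(N)$ acts on the restriction to the diagonal $\maP_\maA$ by conjugation on $\mathrm{hom}(\tilde E_1,\tilde E_2)$, which is the adjoint action; the obstruction to descending is governed by how far from the diagonal we move, and this is controlled by $\varepsilon$.

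First I would fix a $PU(N)\times_B PU(N)$-invariant metric on $\maP_\maA \times_B \maP_\maA$ (coming from an invariant metric on $\maP_\maA$ lifting the Riemannian metric on $M$ used to define $d$), and note that the diagonal $\Delta_{\maP_\maA}\subset \maP_\maA\times_B\maP_\maA$ is $PU(N)$-invariant, where the embedded $PU(N)$ sits diagonally. On a tubular neighbourhood $\maV$ of $\Delta_{\maP_\maA}$ the action of the ``antidiagonal'' direction can be used to trivialise $\mathrm{Hom}(\tilde E_1,\tilde E_2)$ transversally: for $(z,z')$ close enough, parallel transport (for a chosen $PU(N)\times PU(N)$-equivariant connection on $\mathrm{Hom}(\tilde E_1,\tilde E_2)$) along the short geodesic from $(z,z')$ to the nearest diagonal point identifies the fibre with $\mathrm{hom}(\tilde E_1,\tilde E_2)$ over that point. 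This identification is equivariant for the remaining diagonal $PU(N)$ because all data are invariant, and since the diagonal $PU(N)$ acts on $\mathrm{hom}(\tilde E_1,\tilde E_2)$ by conjugation — which descends to $M$ as noted in the discussion preceding the statement — the bundle $\mathrm{Hom}(\tilde E_1,\tilde E_2)|_{\maV}$ descends to a vector bundle over the image of $\maV$ in $M\times_B M$. Choosing $\varepsilon>0$ small enough that $N_{\varepsilon,B}$ lies inside this image produces $\mathrm{Hom}^\maA(E_1,E_2)$, and by construction it restricts to the diagonal as $\mathrm{hom}(E_1,E_2)$. All of this is simply the MMS argument (Proposition in \cite{MMS1}) performed within each fibre $M_b\times M_b$, with the only new ingredient being that the connection, metric and tubular neighbourhood are chosen to be smooth in the base parameter $b\in B$ — which is possible since $B$ is compact and everything can be patched with a partition of unity on $B$.

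Next I would construct the composition law. For $(x,x')\in N_{\varepsilon/2,B}$ and $(x'',x')\in N_{\varepsilon/2,B}$ we have $(x,x'')\in N_{\varepsilon,B}$ by the triangle inequality, so the target bundle is defined there. Lifting to $\maP_\maA\times_B\maP_\maA\times_B\maP_\maA$ and using that $\pi^*\maA$ is a genuine homomorphism bundle there (as recalled before Proposition \ref{prop.ext.Hom}), the exterior-homomorphism bundles become honest $\mathrm{Hom}$-spaces of finite-dimensional vector bundles $\tilde E_i$, and the usual composition of linear maps $\mathrm{Hom}(\tilde E_{2,x''},\tilde E_{2,x'})$… — more precisely $\mathrm{Hom}(\tilde E_{2},\tilde E_{3})\otimes\mathrm{Hom}(\tilde E_1,\tilde E_2)\to \mathrm{Hom}(\tilde E_1,\tilde E_3)$ — is manifestly $PU(N)$-equivariant for the diagonal action. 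Since each factor descends to $M\times_B M$ by the previous paragraph, so does the composition, giving the stated associative law; associativity and consistency with the units over $M_p^2$ are immediate from the corresponding facts for linear maps, and hold on the nose once $\varepsilon$ is small enough that the relevant triple compositions stay inside $N_{\varepsilon,B}$.

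The main obstacle is verifying that the descent in the first paragraph is well defined independently of the auxiliary choices (connection, tubular neighbourhood, and the local trivialisations of $\maP_\maA$) and that, after these choices, the resulting bundle over $N_{\varepsilon,B}$ is smooth along $B$; this is essentially bookkeeping, handled by covering $B$ with finitely many charts over which $\maP_\maA$ trivialises and gluing, exactly as in \cite{MMS1,MMS3} but with the extra base direction carried along. I expect no genuinely new difficulty beyond this uniformity in $b\in B$, since the compactness of the fibration guarantees a uniform $\varepsilon$ works.
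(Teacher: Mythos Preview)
Your argument is correct, but it is considerably more elaborate than what the paper actually does. The paper's proof is a one-liner: it invokes \cite[Proposition~1]{MMS1}, which already produces the bundle $\mathrm{Hom}^{\maA}(E_1,E_2)$ over the full $N_\varepsilon \subset M\times M$ together with the associative composition law, and then simply restricts everything to the submanifold $N_{\varepsilon,B}=N_\varepsilon\cap M_p^2$. No fibrewise construction, no uniformity-in-$b$ argument, and no partition of unity on $B$ is needed, because the restriction of a smooth vector bundle to a submanifold is automatically a smooth vector bundle.

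By contrast, you rebuild the MMS construction inside each $M_b\times M_b$ and then worry about gluing smoothly over $B$. This works, and it has the mild advantage of making explicit the mechanism (diagonal $PU(N)$ acts by conjugation, hence the descent), but it creates artificial difficulties: the smoothness in $b$ and the uniform choice of $\varepsilon$ that you flag as the ``main obstacle'' disappear entirely once you realise the construction lives on $M\times M$ from the start and $M_p^2$ is just a closed subspace of it. The moral is that the fibred structure plays no role in this proposition; it only enters later when one defines families of operators with kernels supported in $N_{\varepsilon,B}$.
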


\begin{proof}
It is shown in \cite[Proposition 1]{MMS1} that for $\varepsilon > 0$ sufficiently small, the exterior homomorphism bundle 
$\mathrm{Hom}( \tilde{E}_1, \tilde{ E}_2)$, descends from a neighborhood of the diagonal in $\maP_\maA \times \maP_\maA$ to a vector
bundle, $\mathrm{Hom}^\maA(E_1, E_2)$, over $N_\varepsilon$ extending $\mathrm{hom}(E_1, E_2)$ with the  associative composition law.
The result follows then by restriction to $N_{\varepsilon,B}$.
\end{proof}

Let $F_1$ and $F_2$ be vector bundles over $M$. 
%Let us fix a continuous family of measures in the Lebesque class on $M$. 
Denote by $|\lambda(M|B)|$ the vector bundle of vertical  densities over $M$ and 
by $|\Lambda(M|B)|$ its pullback to $M_p^2$ through the first projection.  
Recall that families of smoothing operators $\Psi^{-\infty}(M|B,F_1,F_2)$ can be defined 
as operators associated with  smooth kernels $C^\infty(M_p^2,\mathrm{Hom}(F_1,F_2) \otimes |\Lambda(M|B)|)$ 
over $M_p^2=M\times_B M$, i.e. $A\in \Psi^{-\infty}(M|B,F_1,F_2)$ is given by a smooth section 
$A(x,x')\in C^\infty(M_p^2,\mathrm{Hom}(F_1,F_2)\otimes|\Lambda(M|B)|)$ by the formula
$$As(x)=\int_{M_b}A(x,x')s(x'),\qquad s\in C^\infty(M,F_1 )  .$$ 
Furthermore, if $F_3$ is an other vector bundle over $M$ then the composition
$$\Psi^{-\infty}(M|B;F_2,F_3) \circ \Psi^{-\infty}(M|B,F_1,F_2) \subset \Psi^{-\infty}(M|B;F_1,F_3)$$
is given by $$A\circ B(x,x')=\int_{M_b}A(x,x'')\circ B(x'',x') .$$

Following \cite{MMS1}, we now define the linear space of families of smoothing operators and families of pseudodifferential operators with kernels
supported in $N_{\varepsilon,B}$ for any pair $E_1$, $E_2$ of projective bundles associated to a fixed Azumaya bundle.

\begin{defi}
Let $E_1$, $E_2$ be projective bundles associated to a fixed Azumaya bundle $\maA$. 
The linear space of families of smoothing operators with kernel supported in $N_{\varepsilon,B}$ is
\begin{equation*}
%\label{def:smoothing:epsilon}
\Psi^{-\infty}_\varepsilon (M|B,E_1,E_2): = C^\infty_c(N_{\varepsilon,B},\mathrm{Hom}^\maA(E_1,E_2)\otimes |\Lambda(M|B)|).
\end{equation*}
\end{defi}
%
%\begin{remarque}
%Notice that this is  families of operators between $(C^\infty(M_b, E_{1b}))_{b\in B}$ and $(C^\infty(M_b, E_{2b}))_{b\in B}$ neither over $N_{\varepsilon,B}$.
%\end{remarque}

\begin{prop}\cite{MMS1}
 Let $E_1$, $E_2$ and $E_3$ be projective bundles associated to a fixed Azumaya bundle $\maA$.
The composition law of usual families of smoothing operators can be extended directly to define
$$\Psi^{-\infty}_{\varepsilon/2} (M|B;E_2,E_3) \circ \Psi^{-\infty}_{\varepsilon/2} (M|B;E_1,E_2) \subset \Psi^{-\infty}_\varepsilon (M|B;E_1,E_3).$$
For $A\in \Psi^{-\infty}_{\varepsilon/4} (M|B,E_4,E_3) $, $B\in \Psi^{-\infty}_{\varepsilon/4} (M|B,E_3,E_2) $ and $C\in \Psi^{-\infty}_{\varepsilon/4} (M|B,E_2,E_1) $ this product is associative, i.e.
$$A\circ(B\circ C)=(A\circ B)\circ C.$$
\end{prop}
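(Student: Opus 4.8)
The plan is to reduce the claimed associativity to the associativity of the composition law for the exterior homomorphism bundles established in Proposition~\ref{prop.ext.Hom}, together with the associativity of convolution-type products of smooth kernels. First I would unwind the definitions: an element $A\in \Psi^{-\infty}_{\varepsilon/4}(M|B,E_4,E_3)$ is a section $A(x,y)\in C^\infty_c(N_{\varepsilon/4,B},\mathrm{Hom}^\maA(E_4,E_3)\otimes|\Lambda(M|B)|)$, and similarly for $B$ and $C$. The composition $B\circ C$ is given fibrewise over $B$ by the integral formula
$$(B\circ C)(x,z)=\int_{M_b} B(x,y)\circ C(y,z),$$
where the inner $\circ$ is the pointwise composition law $\mathrm{Hom}^\maA_{(x,y)}(E_3,E_2)\times \mathrm{Hom}^\maA_{(y,z)}(E_2,E_1)\to \mathrm{Hom}^\maA_{(x,z)}(E_3,E_1)$ from Proposition~\ref{prop.ext.Hom}, and the density factor $|\Lambda(M|B)|$ is used to make the integral intrinsic. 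One must first check that the support condition is respected: if $d(x,y)<\varepsilon/4$ and $d(y,z)<\varepsilon/4$ then $d(x,z)<\varepsilon/2$, so that $B\circ C$ indeed lands in $\Psi^{-\infty}_{\varepsilon/2}(M|B,E_3,E_1)$, hence is eligible to be composed with $A$ (whose kernel is supported within $\varepsilon/4$, and $\varepsilon/2+\varepsilon/4<\varepsilon$). The same bookkeeping on the other side shows $A\circ B\in\Psi^{-\infty}_{\varepsilon/2}(M|B,E_4,E_2)$ and then $(A\circ B)\circ C\in\Psi^{-\infty}_{\varepsilon}(M|B,E_4,E_1)$, so both sides of the claimed identity live in the same space.

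Next I would write out both triple compositions explicitly as iterated fibre integrals over $M_b$:
$$\big(A\circ(B\circ C)\big)(x,w)=\int_{M_b}\int_{M_b} A(x,y)\circ\big(B(y,z)\circ C(z,w)\big),$$
$$\big((A\circ B)\circ C\big)(x,w)=\int_{M_b}\int_{M_b}\big(A(x,y)\circ B(y,z)\big)\circ C(z,w).$$
Since all kernels are smooth and compactly supported in the relevant neighbourhoods of the diagonal, the integrands are smooth and compactly supported, so Fubini's theorem justifies interchanging the order of the two integrations and equating the domains of integration. It then remains to check that the integrands agree \emph{pointwise}, i.e. that for fixed $(x,y,z,w)$ with all consecutive distances $<\varepsilon/4$ one has
$$A(x,y)\circ\big(B(y,z)\circ C(z,w)\big)=\big(A(x,y)\circ B(y,z)\big)\circ C(z,w)$$
as elements of $\mathrm{Hom}^\maA_{(x,w)}(E_4,E_1)$. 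But this is exactly the associativity of the composition law for the bundles $\mathrm{Hom}^\maA$, which is asserted in Proposition~\ref{prop.ext.Hom} (it is "consistent with the composition over the units", and inherited from the honest associative composition of $\mathrm{Hom}(\tilde E_i,\tilde E_j)$ upstairs on $\maP_\maA\times_B\maP_\maA$ before descent). Tracking the density factors: all three products carry a single $|\Lambda(M|B)|$ pulled back via the first projection, and the Fubini rearrangement does not disturb this, so the two sides agree as genuine kernels, not merely up to the density twist.

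The main obstacle, and the only point requiring genuine care, is the \textbf{pointwise associativity of the descended composition law} on $\mathrm{Hom}^\maA$ over the quadruple of nearby points. Upstairs on $\maP_\maA\times_B\maP_\maA$ the bundles $\mathrm{Hom}(\tilde E_i,\tilde E_j)$ are honest homomorphism bundles between finite-dimensional vector bundles, so composition there is manifestly associative; the content is that this descends compatibly through the $PU(N)$-quotient to a \emph{local} composition on a neighbourhood of the diagonal, and that the three possible ways of composing four factors over four consecutive points all land in $N_{\varepsilon,B}$ and agree. This is precisely the local associativity packaged into Proposition~\ref{prop.ext.Hom} (and, in the non-family case, in \cite[Proposition~1]{MMS1}); since our $\mathrm{Hom}^\maA$ over $N_{\varepsilon,B}$ is simply the restriction of that bundle to $M_p^2$, associativity is inherited verbatim. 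Everything else is the routine verification that fibre integration of smooth compactly supported kernels behaves as expected — the support arithmetic with the various fractions of $\varepsilon$, and Fubini — which I would not belabour.
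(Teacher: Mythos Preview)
Your proof is correct and follows exactly the approach the paper takes: the paper's own proof is the single sentence ``As in \cite{MMS1}, this follows directly from the composition law of Proposition~\ref{prop.ext.Hom},'' and you have simply unpacked that citation by writing out the iterated fibre integrals, checking the support arithmetic, invoking Fubini, and reducing the pointwise identity to the associativity in Proposition~\ref{prop.ext.Hom}. There is nothing to add.
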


\begin{proof}
As in \cite{MMS1}, this follows directly from the composition law of Proposition \ref{prop.ext.Hom}.
\end{proof}

\begin{defi}
Let $E_1$, $E_2$ be projective bundles associated to a fixed Azumaya bundle $\maA$. 
The space of families of order $m$ pseudodifferential operators with kernel supported in $N_{\varepsilon,B}$ is
\begin{equation*}
%\label{def:PDO:epsilon}
\Psi^{m}_\varepsilon (M|B,E_1,E_2): = I^{m-\frac{\dim B}{4}}_c(N_{\varepsilon,B},M)\underset{C^\infty_c(N_{\varepsilon,B})}{\otimes}  C^\infty_c(N_{\varepsilon,B},\mathrm{Hom}^\maA(E_1,E_2)),
\end{equation*}
where $ I^{m-\frac{\dim B}{4}}(N_{\varepsilon,B},M)$ is the set of compactly supported order ${m-\frac{\dim B}{4}}$ conormal distributions to $M$ on $N_{\varepsilon,B}$, see \cite{Hormander3,Melrose}.
% (here $M$ is seen as before as the unit of the groupoid $M_p^2$).
\end{defi}

We have the following standard results, see for example \cite{Hormander1971,Hormander3,Melrose,MMS1,Shubin92}. 
See also \cite{DS2,lauter2000pseudodiff,LescureManchonVassout,NWX,vassout2006unbounded}.

\begin{thm}\cite{MMS1} Let $E_1$, $E_2$ and $E_3$ be projective bundles associated to a fixed Azumaya bundle $\maA$.
\begin{enumerate}
\item Then
$$\hspace*{-0,5cm}\xymatrix{0\ar[r]& \Psi^{m-1}_{\varepsilon} (M|B;E_1,E_2) \ar[r]& \Psi^{m}_{\varepsilon} (M|B;E_1,E_2) \ar[r]^{\hspace*{-1.2cm}\sigma_m}& C^{\infty}(S^*(M|B),\mathrm{hom}(E_1,E_2)\otimes N_m)\ar[r]&0,}$$ 
where $N_{m}$ is the line bundle over $S^*(M|B)$ of smooth functions on $T(M|B)\setminus 0$ 
which are homogeneous of degree $m$.  
\item The composition law of usual families of smoothing operators can be extended directly to define
$$\Psi^{m}_{\varepsilon/2} (M|B;E_2,E_3) \circ \Psi^{m'}_{\varepsilon/2} (M|B;E_1,E_2) \subset \Psi^{m+m'}_\varepsilon (M|B;E_1,E_3).$$
\item   For $A\in \Psi^{m}_{\varepsilon/4} (M|B,E_4,E_3) $, $B\in \Psi^{m'}_{\varepsilon/4} (M[B,E_3,E_2) $ and $C\in \Psi^{m''}_{\varepsilon/4} (M|B,E_2,E_1) $ we have
$$A\circ(B\circ C)=(A\circ B)\circ C.$$
\item Furthermore, the symbol map satisfies 
 $$\sigma_{m+m'}(AB)=\sigma_m(A)\sigma_{m'}(B).$$
\item If $A\in \Psi^{m}_{\varepsilon/2}(M|B;E_1,E_2)$  
is elliptic, i.e. $\sigma_m(A)$ is pointwise invertible on $T(M|B)\setminus 0$, then there exists $Q\in \Psi^{-m}_{\varepsilon/2} (M|B;E_2,E_1)$ 
such that $Q\circ A = \Id-E_R$, 
$A\circ Q = \Id-E_L$, 
where $E_R \in \Psi^{-\infty}_{\varepsilon} (M|B;E_1,E_1)$ and 
$E_L\in \Psi^{-\infty}_{\varepsilon} (M|B;E_2,E_2)$. 
Furthermore, any two such choices $Q'$ and $Q$ satisfy 
$Q'- Q \in \Psi^{-\infty}_{\varepsilon/2} (M|B;E_2,E_1)$.
\end{enumerate}

\end{thm}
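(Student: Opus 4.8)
The proof follows \cite{MMS1}: all five assertions are local near the fibre diagonal, where by Proposition \ref{prop.ext.Hom} the twisted coefficient bundle $\mathrm{Hom}^\maA(E_1,E_2)$ is an honest vector bundle carrying an associative composition law, so that everything reduces to the standard microlocal theory of families of pseudodifferential operators with Schwartz kernels supported near the diagonal. The plan is as follows. First recall that an element of $\Psi^m_\varepsilon(M|B,E_1,E_2)$ is, by definition, a compactly supported conormal distribution on $N_{\varepsilon,B}\subset M_p^2$, conormal to the diagonal $M$, with values in $\mathrm{Hom}^\maA(E_1,E_2)\otimes|\Lambda(M|B)|$; by Proposition \ref{prop.ext.Hom} this bundle restricts on the diagonal to $\mathrm{hom}(E_1,E_2)\otimes|\Lambda(M|B)|$. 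The diagonal has codimension $\dim(M|B)$ in $M_p^2$ and its conormal bundle is canonically $T^*(M|B)$, so the usual principal-symbol isomorphism for conormal distributions (\cite{Hormander3,Melrose}), with the order normalisation built into the definition of $\Psi^m_\varepsilon$, restricts the coefficients to the diagonal and produces the map $\sigma_m$ of part (1) into $C^\infty(S^*(M|B),\mathrm{hom}(E_1,E_2)\otimes N_m)$. Surjectivity follows by lifting a principal symbol to a full symbol and quantising it by a kernel with support inside $N_{\varepsilon,B}$, and the kernel of $\sigma_m$ is precisely $\Psi^{m-1}_\varepsilon(M|B,E_1,E_2)$ by the conormal order filtration; this is the exact sequence of (1).

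For (2)--(4), take $A\in\Psi^m_{\varepsilon/2}(M|B;E_2,E_3)$ and $B\in\Psi^{m'}_{\varepsilon/2}(M|B;E_1,E_2)$, with kernels $A(x,x'')$, $B(x'',x')$ supported where $d(x,x'')<\varepsilon/2$ and $d(x'',x')<\varepsilon/2$ respectively. The composite kernel $(A\circ B)(x,x')=\int_{M_{p(x)}}A(x,x'')\circ B(x'',x')$ makes sense because on the (fibrewise compact) support of the integrand all the relevant pairs of points lie in $N_{\varepsilon/2,B}$, where the fibrewise composition law of Proposition \ref{prop.ext.Hom} is defined; and by the triangle inequality $d(x,x')<\varepsilon$, so $A\circ B$ has support in $N_{\varepsilon,B}$. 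That $A\circ B$ is again conormal of order $m+m'$ to the diagonal, with principal symbol the fibrewise product $\sigma_m(A)\sigma_{m'}(B)$, is the standard composition theorem for families of pseudodifferential operators, checked in local trivialisations in which $\mathrm{Hom}^\maA$ becomes a matrix bundle and the composition law of Proposition \ref{prop.ext.Hom} reduces to matrix multiplication; this gives (2) and (4). Associativity (3) follows from Fubini together with the associativity asserted in Proposition \ref{prop.ext.Hom}, the $\varepsilon/4$ support bounds ensuring that every intermediate product is defined.

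For (5) one runs the symbolic parametrix construction. Ellipticity makes $\sigma_m(A)^{-1}$ a well-defined section of $\mathrm{hom}(E_2,E_1)\otimes N_{-m}$ over $S^*(M|B)$ --- and this only involves a single copy of the diagonal, so no global triviality of $\mathrm{Hom}^\maA$ is needed --- and, solving the transport equations order by order at the level of symbols and asymptotically summing, one produces $Q\in\Psi^{-m}_{\varepsilon/2}(M|B;E_2,E_1)$ whose symbol is a two-sided parametrix for $\sigma_m(A)$. By (2) and (4), $Q\circ A-\Id$ and $A\circ Q-\Id$ have symbols vanishing to all orders, hence lie in $\Psi^{-\infty}$, and since $Q$ and $A$ are supported in $N_{\varepsilon/2,B}$ their composites are supported in $N_{\varepsilon,B}$, so $E_R=\Id-Q\circ A\in\Psi^{-\infty}_\varepsilon(M|B;E_1,E_1)$ and $E_L=\Id-A\circ Q\in\Psi^{-\infty}_\varepsilon(M|B;E_2,E_2)$. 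If $Q'$ is another such parametrix in $\Psi^{-m}_{\varepsilon/2}$, then $Q'-Q\in\Psi^{-m}_{\varepsilon/2}(M|B;E_2,E_1)$ has symbol vanishing to all orders, hence lies in $\Psi^{-\infty}_{\varepsilon/2}(M|B;E_2,E_1)$.

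The main obstacle is not conceptual but bookkeeping: one has to verify that the pointwise twisted composition of Proposition \ref{prop.ext.Hom} is genuinely compatible with the local trivialisations used in the classical composition formula for families of pseudodifferential operators, and to keep track throughout of the doubling of the support radius ($\varepsilon/2+\varepsilon/2=\varepsilon$) so that each claimed inclusion lands in the announced class. Performing the parametrix step at the level of symbols rather than operators is precisely what lets (5) bypass the global trivialisation of $\mathrm{Hom}^\maA(E_1,E_2)$, which does not exist in general.
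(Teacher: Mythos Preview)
The paper does not supply a proof of this theorem: it is stated as a citation from \cite{MMS1} (and related references \cite{Hormander1971,Hormander3,Melrose,Shubin92}), with no argument given. Your proposal is therefore not being compared against anything in the paper; rather, you have reconstructed the argument from \cite{MMS1}, and your sketch is correct and faithful to that source. In particular, your emphasis that the parametrix in (5) must be built at the level of full symbols and then quantised once---rather than by iterating operator compositions, which would blow up the support radius---is exactly the point that distinguishes the projective setting from the ordinary one, and you handle the $\varepsilon$-bookkeeping correctly throughout.
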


Recall the central extension $\xymatrix{1 \ar[r] & \Z_N \ar[r] & SU(N) \ar[r]& PU(N) \ar[r] & 1}$.
The following result is shown in \cite[Proposition 4]{MMS2} in the case $B=\{\ast\}$.  
%Since the proof applies \emph{mutatis mutandis}, it is omitted here.

\begin{thm}\cite{MMS2}\label{thm.MMS2.projective.trans} 
%Let $M$ be a compact, oriented smooth manifold. 
Let $\Omega \subset \maP_{\maA,\pi\circ p}^2$ be a sufficiently small neighborhood of $\maP_\maA$ 
invariant under the diagonal $PU(N)$-action. Then there is a well defined push-forward
map into the families of projective pseudodifferential operators
$$\pi_* : \Psi^m_{\Omega}(\maP_\maA |B;\tilde{E}_1, \tilde{E}_2)^{SU(N)}:=\{A\in \Psi^m(\maP_\maA |B;\tilde{E}_1, \tilde{E}_2)^{SU(N)},\ \mathrm{supp}(A)\subset \Omega\} \rightarrow \Psi^m_\varepsilon(M|B;E_1,E_2)$$
which preserves composition of elements with support in $\Omega'$ such that $\Omega'\circ \Omega' \subset \Omega$.

\end{thm}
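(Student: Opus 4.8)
The statement and its proof are the family counterpart of \cite[Proposition 4]{MMS2}, the base $B$ playing no active role, so I will only indicate the steps. First I would give the conceptual definition of $\pi_*$. An $SU(N)$-invariant family $A\in\Psi^m_\Omega(\maP_\maA|B;\tilde E_1,\tilde E_2)^{SU(N)}$ maps $SU(N)$-equivariant sections of $\tilde E_1=\pi^*E_1$ to $SU(N)$-equivariant sections of $\tilde E_2=\pi^*E_2$. By construction of the $\tilde E_i$, the space of $SU(N)$-equivariant sections of $\tilde E_i$ over $\maP_\maA$ is canonically the space of projective sections of $E_i$ over $M$ (one direction is $\pi^*$, the other descends an equivariant section to $M$; here one uses that $\Z_N\subset SU(N)$ acts trivially on $\maP_\maA$ and by the corresponding scalars on $\C^N$, which is precisely the discrepancy encoded in the projective/Azumaya structure). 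Hence $A$ induces an operator $\pi_*A$ on projective sections by $\pi_*A\,(u):=\pi_*\bigl(A(\pi^*u)\bigr)$. With this description the composition property is immediate: if $A\in\Psi^m_{\Omega'}(\maP_\maA|B;\tilde E_2,\tilde E_3)^{SU(N)}$ and $C\in\Psi^{m'}_{\Omega'}(\maP_\maA|B;\tilde E_1,\tilde E_2)^{SU(N)}$ with $\Omega'\circ\Omega'\subset\Omega$, then $A\circ C\in\Psi^{m+m'}_{\Omega}(\maP_\maA|B;\tilde E_1,\tilde E_3)^{SU(N)}$, and applied to a projective section $u$ both $\pi_*(A\circ C)$ and $\pi_*A\circ\pi_*C$ equal $\pi_*\bigl(A(C(\pi^*u))\bigr)$. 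So the whole content is that $\pi_*A$ actually belongs to $\Psi^m_\varepsilon(M|B;E_1,E_2)$.

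To see this I would pass to Schwartz kernels. Realize $A$ by its kernel $k_A$, an $SU(N)$-invariant conormal distribution of order $m-\tfrac{\dim B}{4}$ to the fibre diagonal $\maP_\maA\hookrightarrow\maP_{\maA,\pi\circ p}^2$, with values in $\mathrm{Hom}(\tilde E_1,\tilde E_2)\otimes|\Lambda(\maP_\maA|B)|$ and support in $\Omega$. Because $\Z_N$ acts trivially on $\maP_\maA$ and, acting by conjugation, trivially on $\mathrm{Hom}(\tilde E_1,\tilde E_2)$, the kernel $k_A$ is in fact invariant under the induced diagonal $PU(N)$-action on $\maP_{\maA,\pi\circ p}^2$. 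Quotienting by this free action and integrating out the residual orbit directions, the vertical densities supplying the Haar integration, the kernel descends; by Proposition \ref{prop.ext.Hom}, over a small enough neighbourhood of the diagonal the descended object takes values in $\mathrm{Hom}^\maA(E_1,E_2)\otimes|\Lambda(M|B)|$, and a direct check identifies it with the operator $\pi_*A$ of the first paragraph. Shrinking $\Omega$ so that $(\pi\times\pi)(\Omega)\subset N_{\varepsilon,B}$ puts the support of the resulting kernel in $N_{\varepsilon,B}$, as required.

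The remaining, and main, point is that $k_{\pi_*A}$ is again conormal of order $m-\tfrac{\dim B}{4}$ to the fibre diagonal of $M\times_B M$, so that $\pi_*A\in\Psi^m_\varepsilon(M|B;E_1,E_2)$. This is the standard behaviour of conormal distributions under the pushforward along the composite submersion obtained by first dividing $\maP_{\maA,\pi\circ p}^2$ by the diagonal $PU(N)$-action and then projecting to $M\times_B M$: the fibre diagonal $\maP_\maA$ is a union of orbits of this free action, and its image meets the fibres of the residual bundle over the diagonal of $M\times_B M$ cleanly, so the wavefront set of the pushforward stays inside the conormal bundle of the diagonal while the order shift produced by the fibre integration is exactly compensated by the density factor; the computation is identical to the one in \cite{MMS2} and rests only on the pushforward properties of conormal distributions in \cite{Hormander3,Melrose}. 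For the smoothing case $m=-\infty$ this reduces to the obvious fact that a smooth diagonally-invariant kernel descends to a smooth kernel on $N_{\varepsilon,B}$. Combining this with the composition law for the classes $\Psi^\bullet_\varepsilon(M|B;-,-)$ established above and the formal identity of the first paragraph finishes the proof; the only genuine obstacle is the kernel pushforward of this last paragraph, everything upstream being bookkeeping once Proposition \ref{prop.ext.Hom} is granted.
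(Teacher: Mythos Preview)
Your overall strategy matches the paper's: define $\pi_*$ at the level of Schwartz kernels by fibre integration and then invoke \cite[Proposition~4]{MMS2} for the conormal regularity and the composition law. The paper is in fact even more laconic than you are: it simply writes
\[
\pi_*A_b(x,x')=\int_{\pi^{-1}(x)\times\pi^{-1}(x')}A_b(p,p')
\]
and says the rest is identical to \cite{MMS2}. Your description via quotienting by the diagonal $PU(N)$-action and then integrating out the residual fibre is equivalent for diagonally invariant kernels, and your discussion of why the pushed-forward distribution stays conormal of the correct order is more detailed than what the paper offers.

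There is however a genuine slip in your first paragraph. You claim that the space of $SU(N)$-equivariant sections of $\tilde E_i$ over $\maP_\maA$ is canonically the space of ``projective sections'' of $E_i$ over $M$, and you use this to give an operator-theoretic definition $\pi_*A(u)=\pi_*\bigl(A(\pi^*u)\bigr)$ and to deduce the composition law. But the subgroup $\Z_N\subset SU(N)$ acts trivially on $\maP_\maA$ while acting by non-trivial scalars on the fibres of $\tilde E_i$; an $SU(N)$-equivariant section $s$ must then satisfy $s(p)=\zeta^{-1}s(p)$ for every $N$-th root of unity $\zeta$, forcing $s\equiv 0$. So there are no non-zero equivariant sections, and no ``projective sections'' of $E_i$ either---this is precisely why the construction in \cite{MMS1,MMS2} is carried out entirely at the kernel level, using that $\mathrm{Hom}^\maA(E_1,E_2)$ \emph{is} an honest bundle near the diagonal even though the $E_i$ separately are not. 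Your section-based argument for $\pi_*(A\circ C)=\pi_*A\circ\pi_*C$ is therefore vacuous as written. The fix is easy: drop the first paragraph and prove compatibility with composition directly from the kernel formula, exactly as in \cite{MMS2}; your remaining paragraphs already contain everything needed for that.
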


\begin{proof}
Notice that if $(x,x')\in M_p^2$ then clearly $\pi^{-1}(x) \times \pi^{-1}(x') \subset \maP_{\maA,\pi\circ p}^2=\maP_{\maA} \times_B \maP_{\maA}$. 
Let $A\in \Psi^m_{\Omega}(\maP_\maA |B;\tilde{E}_1, \tilde{E}_2)^{SU(N)}$ 
and denote by $A_b(p,p')$ the family of Schwartz kernels over $\maP_{\maA,\pi\circ p}^2$.  
We then define the map $\pi_*$ as in \cite{MMS2} by the formula
\begin{equation*}
\pi_*A_b(x,x')=\int_{\pi^{-1}(x) \times \pi^{-1}(x')} A_b(p,p').
\end{equation*}
The rest of the proof is completely similar to \cite[Proposition 4]{MMS2} and is omitted here.
\end{proof}

Let $\pi_M : T(M|B) \rightarrow M$ be the projection. 
As in \cite{MMS1,MMS2}, the symbol $\sigma(A)$ of an elliptic family of projective operators $A$  
defines an element $[\sigma(A)]\in K(T(M|B),\pi_M^*\maA)$ of the
compactly supported twisted K-theory, 
see \cite{bouwknegt2002twisted,donovan,karoubi1968algebres,karoubi2008twisted,rosenberg,tu2004twisted} and the references therein.
Denoting by $\pi_{\maP_\maA} : T_{SU(N)}(\maP_\maA|B) \rightarrow \maP_\maA$ the projection, 
we obtain as in \cite{MMS2} a map in $\k$-theory
$$\iota : K(T(M|B),\pi_M^*\maA) \rightarrow K_{SU(N)}(T_{SU(N)}(\maP_\maA|B)),$$
given by $\iota([\sigma(A)])=[\widetilde{\sigma(A)},\pi_{\maP_\maA}^*\tilde{E}_1, \pi_{\maP_\maA}^*\tilde{E}_2]$ 
where $\widetilde{\sigma(A)} \in C\big(T_{SU(N)}(\maP_{\maA}|B) \setminus \maP_\maA, \hom(\tilde{E}_1,\tilde{E}_2)\big)^{PU(N)}$ 
is the $PU(2^n)$-invariant section corresponding to $\sigma(A) \in C\big(T(M|B)\setminus M,\hom(E_1,E_2)\big)$
with respect to the $PU(N)$-principal bundle $T_{SU(N)} (\maP_\maA|B) \rightarrow T(M|B)$. 
This is $SU(N)$-invariant with respect to the action covering the $PU(N)$-action since the 
action by conjugation does not depend on the representative of a lift of an element of $PU(N)$.

\begin{defi}
Let $A\in \Psi^m_\varepsilon(M|B;E_1,E_2)$ be an elliptic family of projective operators. 
Denote by $\tilde{A}$ the pullback family to $\maP_\maA$. 
Let $\phi\in C^{\infty}(SU(N))$ be a function equal to $1$ in a small enough neighbourhood of the identity. 
Then we define the analytical index of $A$ by 
$$\mathrm{Ind}^{M|B}_a(A)=\sum \limits_{V\in \widehat{SU(N)}} \Ch(m_{\tilde{A}}(V)) \langle \chi_V, \varphi \rangle \in \maH^{ev}_{dR}(B).$$
\end{defi}

%Now using Corollary \ref{cor:Paradan:famille}, we get:
We have the following Atiyah-Singer index formula.

\begin{thm}\label{thm.index.proj}
Let $A\in \Psi^m_\varepsilon(M|B;E_1,E_2)$ be an elliptic family of projective operators. 
Then
$$\mathrm{Ind}^{M|B}_a(A)=(2\pi i)^{-\dim(M|B)}\int_{T(M|B)|B} \Ch_e(\iota[\sigma(A)]) \wedge \hat{A}(T(M|B))^2 \in \maH^{ev}_{dR}(B).$$
\end{thm}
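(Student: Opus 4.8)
The plan is to deduce this from Corollary~\ref{cor:Paradan:famille}, specialised to the central extension~\eqref{eq.ext.SU(N)}: here $G=PU(N)$, $\tilde G=SU(N)$, $\Gamma=\Z_N$, the $PU(N)$-principal bundle is $\maP_\maA\to M$, and the distinguished central element is $\gamma=\Id\in SU(N)$, namely the neutral element of $\Z_N$. The substantive point is to recognise the right-hand side of the definition of $\mathrm{Ind}^{M|B}_a(A)$ as the pairing of a distributional index on $\maP_\maA\to B$ with the cut-off function $\varphi$.

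First I would note that the pulled-back family $\tilde A$ is an $SU(N)$-invariant family of $SU(N)$-transversally elliptic operators on $\maP_\maA\to B$ whose principal symbol represents the class $\iota[\sigma(A)]\in K_{SU(N)}(T_{SU(N)}(\maP_\maA|B))$; this is exactly how $\iota$ was built, since $\widetilde{\sigma(A)}$ is the $PU(N)$-invariant section of $\hom(\tilde E_1,\tilde E_2)$ over $T_{SU(N)}(\maP_\maA|B)$ corresponding to $\sigma(A)$. As the index class depends only on the symbol class, $m_{\tilde A}(V)$ is the $\k$-multiplicity of $V\in\widehat{SU(N)}$ in $\mathrm{Ind}^{\maP_\maA|B}(\iota[\sigma(A)])$, so by the definition~\eqref{def:ind:distrib} of the distributional index,
$$\mathrm{Ind}^{\maP_\maA|B}_{-\infty}(\iota[\sigma(A)])=\sum_{V\in\widehat{SU(N)}}\Ch(m_{\tilde A}(V))\,\chi_V .$$
Pairing with $\varphi$ and comparing with the definition of the analytical index yields $\mathrm{Ind}^{M|B}_a(A)=\langle \mathrm{Ind}^{\maP_\maA|B}_{-\infty}(\iota[\sigma(A)]),\varphi\rangle_{SU(N)}$.

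Next, since $PU(N)$ acts freely on $\maP_\maA$ and $SU(N)$ acts through $\zeta$, the fixed-point set $\maP_\maA^{\tilde g}$ is empty unless $\tilde g\in\Z_N$, so by Lemma~\ref{lem:localisation} the distributional index is supported in the finite subgroup $\Z_N\subset SU(N)$. In particular $\mathrm{Ind}^{M|B}_a(A)$ is independent of the choice of $\varphi$, and we may shrink the support of $\varphi$ so that it meets $\Z_N$ only in the neutral element. Then Corollary~\ref{cor:Paradan:famille}, applied with $\gamma=\Id$ and $\sigma=\iota[\sigma(A)]$, gives
$$\langle \mathrm{Ind}^{\maP_\maA|B}_{-\infty}(\iota[\sigma(A)]),\varphi\rangle_{SU(N)}=(2\pi i)^{-\dim(M|B)}\int_{T(M|B)|B}\Ch_e(\iota[\sigma(A)])\wedge\hat{A}(T(M|B))^2 ,$$
which, combined with the previous paragraph, is the asserted formula.

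I expect the only real obstacle to be the bookkeeping in the first step: verifying that taking the principal symbol of the pulled-back operator $\tilde A$ produces precisely the class $\iota[\sigma(A)]$ (equivalently, that the passage from the twisted symbol class in $K(T(M|B),\pi_M^*\maA)$ to $\tilde A$ is compatible with the index map of Section~\ref{sec.cohomological.index}). Everything else is already packaged in the quoted results — the localisation lemma gives both the support statement and the independence of $\varphi$, and the vanishing of the Chern--Weil factor $e^\Theta$ together with the descent of the equivariant $\hat{A}^2$-genus from $\maP_\maA$ to $M$ is exactly what Theorem~\ref{thm.main.paradan} and Corollary~\ref{cor:Paradan:famille} encode.
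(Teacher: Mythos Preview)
Your proposal is correct and follows exactly the paper's approach: the paper's proof is the one-line remark ``We apply Corollary~\ref{cor:Paradan:famille} to the central extension $1\to\Z_N\to SU(N)\to PU(N)\to 1$,'' and your write-up simply unpacks the bookkeeping (identifying $\sigma(\tilde A)$ with $\iota[\sigma(A)]$, invoking Lemma~\ref{lem:localisation} for the support and the independence of $\varphi$, and matching the pairing with the definition of $\mathrm{Ind}^{M|B}_a$) that the paper leaves implicit.
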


\begin{proof}
We apply Corollary \ref{cor:Paradan:famille} to the  central extension
\begin{equation*}
\xymatrix{1 \ar[r] & \Z_N \ar[r] & SU(N) \ar[r]& PU(N) \ar[r] & 1}.
\end{equation*}
\end{proof}

\subsection{Families of projective Dirac operators}\label{section.fam.proj.Dirac}
Assume that $T(M|B)$ is oriented and that $\dim (M|B)=\dim M -\dim B=2n$. 
Recall that $B$ is also assumed to be oriented. 
Consider the special case where the Azumaya bundle $\maA=\C l(M|B)$ is the complexified Clifford bundle of $T(M|B)$. 
Denote as before by $\pi : \maP \rightarrow M$ the $PU(2^n)=Aut(\C l(2n))$-principal bundle of trivializations associated with $\maA$. 
We assume that the metric $g_M$ on $M$ is constructed from the pull back of a metric $g_B$ on $B$ 
and a metric on $T(M|B)$, i.e. $g_M=g_{M|B}\oplus p^*g_B$. 
Notice that this can be achieved by picking a random metric on $M$ 
and replacing the metric on the orthogonal to $T(M|B)$ by $p^*g_B$.
Similarly, we assume that the metric $g_\maP$ on $\maP$ is given by 
$g_\maP = \langle\cdot ,\cdot \rangle_{\mathfrak{su}} \oplus \pi^*g_M$, 
where $\langle \cdot ,\cdot \rangle_{\mathfrak{su}}$ is a metric on $T(\maP|M)$.
  
Let $\maF:=F^{SO}(M|B)$ be the bundle of oriented orthonormal frames of $T(M|B)$. 
%and let $F^{SO}(T_{SU(2^n)}(\maP|B))$ be the bundle of oriented orthonormal frames of $T_{SU(2^n)}(\maP|B)$.
We have the identification $\maP=\maF\times_{SO(2n)} PU(2^n)$, where 
$SO(2n) \hookrightarrow PU(2^n)=Aut(\mathbb{C}l(2n))$ is the standard embedding, see \cite{SpinGeometry}. 
 
Following \cite{MMS1}, we proceed now to the construction of the family of projective Dirac operators.
Let us fix from now on a $PU(2^n)$-equivariant $\ast$-isomorphism 
$$\Phi : \mathbb{C}l(2n) \rightarrow M_{2^n}(\mathbb{C}).$$ 
\begin{lem}
\begin{enumerate}
\item We have the following $PU(2^n)$-equivariant trivialisation $T : \pi^*\mathbb{C}l(M|B) \rightarrow \maP \times \mathbb{C}l(2^n)$ given by
$$T(f,\varphi) = (f,f^{-1}(\varphi)),$$
where $f\in \maP$ is seen as an $\ast$-isomorphism $f : \mathbb{C}l(2n) \rightarrow \mathbb{C}l(M|B)_{\pi(f)}$.
\item Since $\maP=\maF \times_{SO(2n)} PU(2^n)$ and $\mathbb{C}l(M|B)=\maF \times_{SO(2n)} \mathbb{C}l(2n)$, 
the previous trivialisation can be rewritten
$$([\mathscr{E},A], [\mathscr{E},\tilde{\varphi}])  \mapsto ([\mathscr{E},A], \hat{A}^{-1} \tilde{\varphi}\hat{A}),$$
where $\hat{A}\in SU(2^n)$ is any lift of $A\in PU(2^n)$.
\item The previous trivialisation induces using $\Phi$ the trivialisation 
$$\pi^*\mathbb{C}l(M|B) \rightarrow \maP \times M_{2^n}(\mathbb{C}).$$
\item We can then define
$$c : \pi^*T(M|B) \rightarrow \maP \times M_{2^n}(\mathbb{C}).$$ 
\end{enumerate}

\end{lem}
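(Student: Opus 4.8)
The plan is to verify the four items in order; each comes down to a bookkeeping identity, the only substantive point being the compatibility of the various identifications, which I single out as the main obstacle.

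First I would settle (1). By construction $\maP\to M$ is the $PU(2^n)=\operatorname{Aut}(\C l(2n))$-principal bundle whose fibre over $x$ is the set of $\ast$-algebra isomorphisms $f\colon\C l(2n)\to\C l(M|B)_x$, with right action $f\cdot g=f\circ g$. Hence the fibre of $\pi^*\C l(M|B)$ over $f$ is $\C l(M|B)_{\pi(f)}$, and $T(f,\varphi)=(f,f^{-1}(\varphi))$ is, fibrewise, just the isomorphism $f^{-1}$; so $T$ is a smooth bundle isomorphism over $\maP$ which is a $\ast$-isomorphism on each fibre (smoothness because $f\mapsto f^{-1}$ is smooth). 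For equivariance one puts the $PU(2^n)$-action $(f,\varphi)\cdot g=(fg,\varphi)$ on $\pi^*\C l(M|B)$ (legitimate since $\pi(fg)=\pi(f)$) and $(f,\psi)\cdot g=(fg,g^{-1}(\psi))$ on $\maP\times\C l(2n)$, and checks $T((f,\varphi)\cdot g)=(fg,(fg)^{-1}(\varphi))=(fg,g^{-1}(f^{-1}\varphi))=T(f,\varphi)\cdot g$.

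Then I would rewrite $T$ in the coordinates of $\maP=\maF\times_{SO(2n)}PU(2^n)$ and $\C l(M|B)=\maF\times_{SO(2n)}\C l(2n)$ from \cite{SpinGeometry}, which gives (2). A frame $\mathscr{E}$ induces the isomorphism $\C l(2n)\to\C l(M|B)_x$, $\tilde\varphi\mapsto[\mathscr{E},\tilde\varphi]$, and $[\mathscr{E},A]\in\maP$ corresponds to the isomorphism $f\colon\tilde\varphi\mapsto[\mathscr{E},A(\tilde\varphi)]$; the content of this step is that these are well defined on $SO(2n)$-equivalence classes, which is precisely the compatibility of the standard embedding $SO(2n)\hookrightarrow PU(2^n)$ with the (twisted adjoint) Clifford action of $SO(2n)$ on $\C l(2n)$. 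Granting this, $f^{-1}([\mathscr{E},\tilde\varphi])=A^{-1}(\tilde\varphi)$, and since every $\ast$-automorphism of the central simple $\ast$-algebra $\C l(2n)\cong M_{2^n}(\C)$ is conjugation by a unitary — Skolem--Noether, adjusting the implementing element to lie in $SU(2^n)$ — any lift $\hat A\in SU(2^n)$ of $A$ realises $A^{-1}$ as $\tilde\varphi\mapsto\hat A^{-1}\tilde\varphi\hat A$, independently of the lift because the scalar cancels in the conjugation. This is the displayed formula.

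Finally, (3) is obtained by post-composing $T$ with $\id_{\maP}\times\Phi$, which remains $PU(2^n)$-equivariant because $\Phi$ is an equivariant $\ast$-isomorphism; and (4) follows by restricting this trivialisation to the degree-one subbundle $T(M|B)\hookrightarrow\C l(M|B)$ (the Clifford bundle being the complexified Clifford algebra bundle of $T(M|B)$), which produces the fibrewise-linear, $PU(2^n)$-equivariant injection $c(f,v)=(f,\Phi(f^{-1}(v)))$, whose image is the space of ``abstract gamma matrices'', i.e.\ matrices squaring into $\R\cdot\Id$. I expect no conceptual difficulty here: the one thing that needs care — and the real crux — is the web of identifications behind (2), namely that $\maP=\maF\times_{SO(2n)}PU(2^n)$ and $\C l(M|B)=\maF\times_{SO(2n)}\C l(2n)$ are compatible with the description of $\maP$ as the bundle of Clifford trivialisations, so that ``the previous trivialisation'' genuinely is the map written in (2); once that is pinned down the rest is formal.
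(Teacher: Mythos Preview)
Your proposal is correct and follows essentially the same route as the paper: you verify equivariance of $T$ via $(f\circ A)^{-1}=A^{-1}\circ f^{-1}$, invoke that every $\ast$-automorphism of $\C l(2n)\cong M_{2^n}(\C)$ is inner (the paper states this directly, you name Skolem--Noether), and then pass to the associated-bundle coordinates. The only difference in emphasis is that the paper skips (2)--(3) and spells out (4) explicitly, writing $c([\mathscr{E},A],[\mathscr{E},v])=([\mathscr{E},A],\Phi(\hat A^{-1}\hat c(v)\hat A))$ and checking independence of the $SO(2n)$-representative via $\hat c(R^{-1}v)=\hat R^{-1}\hat c(v)\hat R$; this is exactly the ``compatibility of identifications'' you flag as the crux, so your diagnosis of where the content lies is on target.
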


\begin{proof}
The action of $PU(2^n)$ on $\maP$ is given by $f \cdot A=f\circ A$ therefore
we clearly have 
$$ T\Big((f,\varphi)\cdot A\Big)=T\Big(f\circ A, \varphi\Big) = \Big(f\circ A , A^{-1}\circ f^{-1}(\varphi)\Big).$$
Since any automorphism of $\mathbb{C}l(2n) \cong M_{2^n}(\mathbb{C})$ is inner,
we can write $T\Big((f,\varphi)\cdot A\Big)=\Big(f\circ A , \hat{A}^{-1} (f^{-1}(\varphi))\hat{A}\Big)$, 
where $\hat{A}\in SU(2^n)$ is any lift of $A \in PU(2^n)$ 
through the central extension $SU(2^n) \rightarrow PU(2^n)$ by $\Z_{2^n}$.\\

The three last items are similar and therefore we shall only explain the last item.
% The map does not depend on the choices because $SO(2n)$ acts on $\mathbb{C}l(2n)$ through $PU(2^n)$ 
%or in other words by conjugation by any lift in $Spin(2n) \subset SU(2^n)$.
%3. and 4. 
Recall that $T_{SU(2^n)}(\maP |B)=\pi^*TM$ and that any element of $\pi^*TM$ is of the form 
$([\mathscr{E},A],[\mathscr{E},v])\in (\maF\times_{SO(2n)} PU(2^n)) \times_M (\maF \times_{SO(2n)} \R^{2n})$. 
Using the isomorphism $\Phi$ and the trivialisation $\pi^*\mathbb{C}l(M|B) \rightarrow \maP \times \mathbb{C}l(2^n)$,
we can define the map 
$$c : \pi^*T(M|B) \rightarrow \maP \times M_{2^n}(\mathbb{C})$$
by $c([\mathscr{E},A],[\mathscr{E},v])=([\mathscr{E},A], \Phi(\hat{A}^{-1}\hat{c}(v)\hat{A}))$,
where $\hat{c}: \R^{2n} \rightarrow \mathbb{C}l(2n)$ is the standard map and $\hat{A}\in SU(2^n)$ is any lift of $A\in PU(2^n)$.
The map $c$ does not depend on the choices. Indeed, $c$ clearly does not depend on the choice 
of the lift in $SU(2^n)$ because of conjugation and for any lift $\hat{R}\in Spin(2n)$ of $R\in SO(2n)$, 
we have $\hat{c}(R^{-1}v)=\hat{R}^{-1}\hat{c}(v)\hat{R}$ therefore 
$c([\mathscr{E}\circ R,R^{-1}A],[\mathscr{E}\circ R,R^{-1}v])=([\mathscr{E},A], f(\hat{A}^{-1}\hat{R}\hat{c}(R^{-1}v)\hat{R}^{-1}\hat{A}))=c([\mathscr{E},A],[\mathscr{E},v])$.
\end{proof}

Denote by $\omega_\C=i^n \hat{c}(e_1)\cdots \hat{c}(e_{2n})$ the chirality element in $\mathbb{C}l(2n)$ and
let $\tilde{\omega}_\C([\maE,A])=\Phi(\hat{A}^{-1}\omega_\C \hat{A})$ be the corresponding section of $\maP \times M_{2^n}(\mathbb{C})$.
Notice that this is the equivariant section obtained from the chirality global section $\omega_{M|B} : M \rightarrow \mathbb{C}l(M|B)$ 
which at $x\in M$ is given by $\omega_{M|B}(x)=i^n v_1\cdots v_{2n}$ for any local oriented orthonormal basis $(v_i)$ of $ T_x(M|B)$.

Following \cite{MMS1}, let us introduce the projective vertical $Spin$ bundle.
\begin{defi}
We call the $SU(2^n)$-equivariant trivial bundle $\tilde{\mathbb{S}}:=\maP \times \mathbb{C}^{2^n}$
the vertical projective $Spin$ bundle.
The vertical projective half spinor bundle  $\mathbb{S}^\pm$ are the projective bundles 
associated with the projections $\frac{1\pm\omega_{M|B}}{2}$.
They can be represented by the $SU(2n)$-equivariant vector bundles 
$\tilde{\mathbb{S}}^\pm:=\left(\frac{1\pm\tilde{\omega}_\C}{2}\right)\tilde{\mathbb{S}}$. 
\end{defi}

\begin{remarque}
The previous definition coincides with the definition of the projective 
$Spin$ bundle introduced in \cite{MMS1} when $B$ is reduced to a point.
\end{remarque}

As in \cite{MMS1,MMS2}, 
the Levi-Civita connection induces partial connections $\nabla^\pm$ on $\mathbb{S}^\pm$.
More precisely, on $T(M|B)$ we consider the connection $\nabla^{LCV}=\pi^V\circ \nabla^{LC}$, 
where $\pi^V : TM \rightarrow T(M|B)$ is the projection induced by the metric on $M$ 
and $\nabla^{LC}$ is the Levi-Civita connection on $M$. 
Then $\nabla^{LCV}$ induces the Levi-Civita connection on each fibre $M_b$
and the pullback of $\nabla^{LCV}$ is a metric connection on $T_{SU}(\maP|B)=\pi^*T(M|B)$. 
This in turn defines $SU(2^n)$-equivariant connections $\nabla^{\pm}$ on $\tilde{\mathbb{S}}^{\pm}$ 
using the group embedding $\rho : SO(2n) \hookrightarrow PU(2^n)$, 
see \cite{SpinGeometry} for instance.

\begin{lem}
Let $\theta$ be the connection one form on $\maF$ induced by $\nabla^{LVC}$.
\begin{enumerate}
\item The $1$-form $\tilde{\theta} : T\maF \times TPU(2^n) \rightarrow \mathfrak{pu}(2^n) \cong \mathfrak{su}(2^n)$ given by 
$$\tilde{\theta}(\mathscr{E},v,A,X)=Ad_{A^{-1}}\big(\rho_*(\theta(v))\big) + X, \qquad (\mathscr{E},v,A,X)\in T\mathcal{F} \times PU(2^n)\times \mathfrak{pu}(2^n)$$
induces a $SU(2^n)$-invariant connection $\bar{\theta} : T\maP \rightarrow \mathfrak{pu}(2^n)$.
\item The $SU(2^n)$-equivariant connection $\nabla$ on $\tilde{\mathbb{S}}$ is given by $\nabla = d + U_*(\bar{\theta})$, 
where $U_* : \mathfrak{pu}(2^n) \rightarrow M_{2^n}(\mathbb{C})$ is the composition 
of the isomorphism $\mathfrak{pu}(2^n) \rightarrow \mathfrak{su}(2^n)$ and the inclusion $\mathfrak{su}(2^n) \rightarrow M_{2^n}(\mathbb{C})$. 
Then the connections $\nabla^\pm$ on $\tilde{\mathbb{S}}^\pm$ are given by $\nabla^\pm=\frac{1\pm\tilde{\omega}_\C}{2}\nabla$.   
\item The connection $\nabla$ is a Clifford connection, i.e. if $\nabla^{\maP \times M_{2^n}(\mathbb{C})}=\Phi \circ T\nabla^{\pi^*\mathbb{C}l(M|B)}T^{-1} \circ \Phi^{-1}$ is the connection induced by the pullback connection of $\nabla^{LCV}$ modulo the isomorphism $\Phi \circ T$ then
$$\big[\nabla , C \big] = \nabla^{\maP \times M_{2^n}(\mathbb{C})}(C),\ \forall C \in C^\infty(\maP,M_{2^n}(\mathbb{C})).$$ 
\end{enumerate}

\end{lem}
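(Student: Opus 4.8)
The plan is to verify the three assertions by reducing everything to connection one-forms on the $SO(2n)$-principal bundle $\maF$ and exploiting that, after transport through $\Phi$, the inclusion $\rho : SO(2n)\hookrightarrow PU(2^n)=Aut(\mathbb{C}l(2n))$ is the Clifford representation and that $Aut(M_{2^n}(\mathbb{C}))=PU(2^n)$, i.e. every automorphism is inner. \textbf{Part (1).} Write $\maP=\maF\times_{SO(2n)}PU(2^n)$ as the quotient of $\maF\times PU(2^n)$ by the $SO(2n)$-action $R\cdot(\mathscr{E},A)=(\mathscr{E}R,\rho(R)^{-1}A)$. I would first check that the $\mathfrak{pu}(2^n)$-valued form $\tilde\theta$ is basic for this action: on the fundamental vector field of $\xi\in\mathfrak{so}(2n)$ the $\maF$-summand contributes $Ad_{A^{-1}}(\rho_*(\xi))$ since $\theta(\xi_\maF)=\xi$, while the $PU(2^n)$-summand contributes $-Ad_{A^{-1}}(\rho_*(\xi))$, so $\tilde\theta$ vanishes there; and $SO(2n)$-invariance follows from the equivariance of $\theta$ on $\maF$ together with $\rho_*\circ Ad_R=Ad_{\rho(R)}\circ\rho_*$. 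Hence $\tilde\theta$ descends to a one-form $\bar\theta$ on $\maP$. Then one verifies the two principal-connection axioms for $\bar\theta$ under the $PU(2^n)$-action $[\mathscr{E},A]\cdot g=[\mathscr{E},Ag]$: the reproducing property $\bar\theta(\xi^*)=\xi$ comes from the $+X$ term and the equivariance from the $Ad_{A^{-1}}$ prefactor. The $SU(2^n)$-invariance is then automatic, since $SU(2^n)$ acts on $\maP$ through $PU(2^n)$ and $\Z_{2^n}$ acts trivially on $\mathfrak{su}(2^n)=\mathfrak{pu}(2^n)$.

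\textbf{Part (2).} On the product $\tilde{\mathbb{S}}=\maP\times\mathbb{C}^{2^n}$ the formula $\nabla=d+U_*(\bar\theta)$ is a genuine connection because $U_*(\bar\theta)$ is a one-form valued in $\mathfrak{su}(2^n)\subset M_{2^n}(\mathbb{C})$ acting on $\mathbb{C}^{2^n}$, and it is metric since $\mathfrak{su}(2^n)$ consists of skew-Hermitian matrices. Its $SU(2^n)$-equivariance (for the diagonal action, $SU(2^n)$ acting on $\maP$ through $\zeta$ and on $\mathbb{C}^{2^n}$ by the standard representation) follows from the $SU(2^n)$-invariance of $\bar\theta$ from (1) and from $U_*(Ad_{\zeta(g)^{-1}}\xi)=g^{-1}U_*(\xi)g$, which holds because the inclusion $SU(2^n)\hookrightarrow U(M_{2^n}(\mathbb{C}))$ intertwines conjugation in $PU(2^n)$ with conjugation in $M_{2^n}(\mathbb{C})$. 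For $\nabla^\pm$: any section $s$ of $\tilde{\mathbb{S}}^\pm=\frac{1\pm\tilde\omega_\C}{2}\tilde{\mathbb{S}}$ satisfies $\frac{1\pm\tilde\omega_\C}{2}s=s$, so $\frac{1\pm\tilde\omega_\C}{2}\nabla s$ obeys the Leibniz rule and defines a connection on $\tilde{\mathbb{S}}^\pm$; once (3) is available, $\tilde\omega_\C$ is $\nabla$-parallel (it is the image under $\Phi\circ T$ of the $\nabla^{LCV}$-parallel chirality section $\omega_{M|B}$), hence $\nabla$ preserves the splitting $\tilde{\mathbb{S}}=\tilde{\mathbb{S}}^+\oplus\tilde{\mathbb{S}}^-$ and $\nabla^\pm$ is simply its restriction, which coincides with the connection built before the lemma via $\rho$.

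\textbf{Part (3).} Letting $C\in C^\infty(\maP,M_{2^n}(\mathbb{C}))$ act on sections of $\tilde{\mathbb{S}}$ by left multiplication, a one-line computation on the trivial bundle gives $[\nabla,C]=dC+[U_*(\bar\theta),C]$, so $[\nabla,\cdot\,]$ is the induced connection on $\maP\times M_{2^n}(\mathbb{C})$ with connection form $ad(U_*(\bar\theta))$. It then remains to identify this with $\nabla^{\maP\times M_{2^n}(\mathbb{C})}=\Phi\circ T\,\nabla^{\pi^*\mathbb{C}l(M|B)}\,T^{-1}\circ\Phi^{-1}$. The pullback connection $\nabla^{\pi^*\mathbb{C}l(M|B)}$ comes from $\nabla^{LCV}$, whose connection form on $\maF$ is $\theta$, so under $\mathbb{C}l(M|B)=\maF\times_{SO(2n)}\mathbb{C}l(2n)$ it has connection form given by the Clifford action of $\theta$; transporting through $\Phi$ replaces this, by the very definition of $\rho$, by $ad(\rho_*(\theta))=ad(U_*(\rho_*(\theta)))$, and transporting further through the explicit trivialisation $T$ of the previous lemma — whose effect, since $T(f,\varphi)=(f,f^{-1}\varphi)$ and $f$ transforms by right composition under $PU(2^n)$, is precisely to conjugate and to introduce a vertical $+X$-type term — reproduces exactly $ad(U_*(\bar\theta))$; the needed infinitesimal compatibility is the derivative of $\hat c(Rv)=\hat R\hat c(v)\hat R^{-1}$ for $\hat R\in Spin(2n)$ lying over $R$. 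The descent and equivariance verifications in (1)--(2) are routine; the main obstacle is (3), namely keeping track simultaneously of the two trivialisations $\Phi$ and $T$ and the three groups $SO(2n)$, $Spin(2n)$, $SU(2^n)$, and checking that the Levi--Civita--Clifford connection transported to the trivialised matrix-algebra bundle is exactly $ad\circ U_*$ applied to the connection form $\bar\theta$ — which is where the identification $Aut(M_{2^n}(\mathbb{C}))=PU(2^n)$ and the compatibility of $\rho$ with the Clifford representation genuinely enter.
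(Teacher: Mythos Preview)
Your proposal is correct and follows essentially the same route as the paper: in (1) you verify basicness of $\tilde\theta$ for the $SO(2n)$-action and the connection axioms for the $PU(2^n)$-action (the paper packages the horizontality and invariance checks into a single computation with $(R,Y)\in SO(2n)\times\mathfrak{so}(2n)$, but the content is identical); in (2) you verify $SU(2^n)$-equivariance of $\nabla$ from that of $\bar\theta$, just as the paper does; and in (3) you compute $[\nabla,C]=dC+[U_*(\bar\theta),C]$ and identify the transported Clifford connection as $d+Ad_*(\bar\theta)$, which is exactly the paper's argument, though you spell out more carefully how the trivialisation $T$ and the isomorphism $\Phi$ convert the Levi--Civita--Clifford form into $ad(U_*(\bar\theta))$. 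The only organisational wrinkle is your forward reference in (2) to the parallelism of $\tilde\omega_\C$ via (3); this is harmless since (3) does not use the $\nabla^\pm$ statement.
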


\begin{proof}
1. Denote by $r : SO(2n) \times \maF \times PU(2^n) \rightarrow \maF \times PU(2^n)$ the action, i.e. $r(R,\mathscr{E},A)=(\mathscr{E}\circ R, R^{-1}A)$.
We have
\begin{align*}
\tilde{\theta}\bigg(dr\big((R,Y),(\mathscr{E},v,A,X)\big)\bigg)
&= \tilde{\theta}\bigg(\Big(\mathscr{E}\circ R,d_\mathscr{E}r_R\big(v+Y_\maF(\mathscr{E})\big)\Big),\Big(R^{-1}A, -R^{-1}YA+R^{-1}AX\Big) \bigg)\\
&=Ad_{A^{-1}R}Ad_{R^{-1}}\big(\theta(\mathscr{E},v) + Y\big) + dL_{A^{-1}R}\big((-R^{-1}YA +R^{-1}AX\big)\\
&=\tilde{\theta}\bigg(\Big(\mathscr{E},v\Big),\Big(A,X\Big)\bigg),
\end{align*}
where we have used the identification of $TPU(2^n)\cong PU(2^n)\times \mathfrak{pu}(2^n)$ given by left translation.
Furthermore, for any $g\in SU(2^n)$ we have 
\begin{align*}
\tilde{\theta}\bigg(\Big(\mathscr{E},v\Big), \Big(A,X\Big)\cdot g\bigg)
&= Ad_{g^{-1}}\bigg(Ad_{A^{-1}}\big(\theta(\mathscr{E},v)\big)\bigg) +dL_{g^{-1}A^{-1}}\bigg(AXg\bigg)\\
&=Ad_{g^{-1}}\bigg(\tilde{\theta}\Big(\big(\mathscr{E},v\big),\big(A,X\big)\Big)\bigg).
\end{align*}
Therefore, $\tilde{\theta}$ induces a $SU(2^n)$-invariant connection $\bar{\theta}$ on $\maP$.\\

2. This follows directly from the $SU(2^n)$-invariance of $\bar{\theta}$. 
Indeed, let $s \in \mathbb{C}^{2^n}$, $g \in SU(2^n)$, $v\in C^\infty(P,TP)$ 
and denote by $r : \maP \times SU(2^n) \rightarrow \maP$ the action, i.e. $r(f,g)=f\circ \beta(g)$ where $\beta : SU(2^n) \rightarrow PU(2^n)$ is the quotient map,  then 
\begin{align*}
g\bigg(\nabla_{d_fr_g(v)}s\bigg)(f\circ g)
&= g\bigg(d_{f\circ \beta(g)}s \circ d_fr_g(v)\bigg) + gU_*\bigg(\bar{\theta}(d_fr_g(v))\bigg)s(f\circ \beta(g))\\
&=d_f(g\cdot s)(v) + gU_*\bigg(Ad_{g^{-1}}Big(\bar{\theta}(v)\Big)\bigg)s(f\circ \beta(g))\\
&=d_f(g\cdot s)(v) + U_*\bigg(\bar{\theta}(v)\bigg)(g\cdot s)(f),
\end{align*}
where we recall that $g\cdot s(f)=g\big(s(f\circ \beta(g))\big)$.\\

3. The last statement is clear because the connection on $\pi^*\mathbb{C}l(M|B)\cong \maP \times \mathbb{C}l(2n)$ 
is given by $d+ Ad_*(\bar{\theta})$, where $Ad : PU(2^n) \rightarrow Aut(\mathbb{C}l(2n))$ 
is the representation given through the isomorphism $\Phi$ by conjugation by $SU(2^n)$.
Therefore by definition, we get  
$\Phi\bigg(Ad_*\big(\bar{\theta}\big)\Big(\Phi^{-1}(C)\Big)\bigg)=U_*(\bar{\theta})C - C U_* (\bar{\theta}).$
%associated with the connection $\bar{\theta}$ through the morphism $\rho : SO(2n) \rightarrow PU(2^n)$ and
%therefore acts as derivation which through the isomorphisms $\Phi \circ T$ corresponds to
\end{proof}

As in \cite{MMS1}, the Levi-Civita connection on $M$ also 
induces similarly connections $\nabla^{\mathbb{S}^\pm}$ on $\mathbb{S}^\pm$.
Furthermore, the homomorphism bundle of the vertical 
$Spin$ bundle $\mathbb{S}$ can be identified with $\mathbb{C}l(M|B)$
and recall that it has an extension to $\widehat{\mathbb{C}l(M|B)}=\hat{\maA}$ 
in a neighborhood of the diagonal, and this extended
bundle also has an induced connection, see Proposition \ref{prop.ext.Hom} and \cite{MMS1}.  
We can then define the associated family $\cancel{\partial}_{M|B}$ of projective Dirac operators with kernel 
$$\cancel{\partial}_{M|B}:= cl \cdot \nabla^{\mathbb{S}}_L(\kappa_{\Id}),\quad \kappa_{\Id}=\delta(z-z') \Id_{S}.$$
Here, as in \cite{MMS1}, $\kappa_{\Id}$ is the kernel of the identity operator seen
as a family of projective differential operators on $\mathbb{S}$ 
(i.e. a family of projective pseudodifferential operators with support in the diagonal)
and $\nabla_L^{\mathbb{S}}$ is the connection
restricted to the left variables with $cl$ the contraction given by the Clifford
action of $T(M|B)$ on the left. 
The operator $\cancel{\partial}_{M|B}$ is then odd with respect to the graduation and elliptic with symbol 
$\sigma(\cancel{\partial}_{M|B})(\xi)=cl(\xi)$ the Clifford multiplication.

We now represent the previous family of projective operators 
by a family $\cancel{\partial}_{\maP|B}$ of $SU(2^n)$-transversally elliptic operators. 
Let $\nabla^\pm$ be the $SU(2^n)$-equivariant connections induced by 
the Levi-Civita on the $SU(2^n)$-equivariant vector bundles $\tilde{\mathbb{S}}^\pm$.
We then obtain the corresponding family $\cancel{\partial}_{\maP|B}^+$ of 
$SU(2^n)$-transversally elliptic operators on $\maP$ which is given by
$$ \cancel{\partial}_{\maP|B}^+ = \sum c(e_i) \nabla^+_{e_i},$$
where $c(e_i)$ is the Clifford multiplication introduced above 
and $(e_i)$ is any local orthonormal basis of $T(M|B)$, 
see also \cite{MMS2,Yamashita}. 
%Using Theorem \ref{thm.MMS2.projective.trans}, 
%this in turn defines a family of projective Dirac operators on $M$ denoted by 
%$$ \cancel{\partial}_{M|B}^+=\pi_*(\cancel{\partial}_{\maP|B}^+).$$
Since the principal symbol of $\cancel{\partial}_{\maP|B}^+$ is given by 
$\sigma(\cancel{\partial}_{\maP|B}^+)(\xi)=c(\xi)$ for any $\xi \in T_{SU(2^n)}(\maP|B)$, we get:

\begin{cor}\label{cor.ind.proj.dirac}
The index of the family of projective Dirac operators is given by 
$$\mathrm{Ind}^{M|B}_a(\cancel{\partial}_{M|B}^+)=(2\pi i)^{-n}\int_{M|B} \hat{A}(T(M|B)) \in \maH^{ev}_{dR}(B).$$
\end{cor}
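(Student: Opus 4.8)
The plan is to apply Theorem \ref{thm.index.proj} to the elliptic family of projective operators $A=\cancel{\partial}_{M|B}^+$ and then to evaluate the resulting cohomological integral by a local computation. By construction the principal symbol of $\cancel{\partial}_{M|B}^+$ is the vertical Clifford multiplication, and the $PU(2^n)$-invariant section it induces on $T_{SU(2^n)}(\maP|B)$ is exactly the principal symbol of the $SU(2^n)$-transversally elliptic family $\cancel{\partial}_{\maP|B}^+$; hence $\iota[\sigma(\cancel{\partial}_{M|B}^+)]=[\sigma(\cancel{\partial}_{\maP|B}^+)]$ and Theorem \ref{thm.index.proj} (with $\dim(M|B)=2n$) gives
\[
\mathrm{Ind}^{M|B}_a(\cancel{\partial}_{M|B}^+)=(2\pi i)^{-2n}\int_{T(M|B)|B}\Ch_e\big(\iota[\sigma(\cancel{\partial}_{M|B}^+)]\big)\wedge\hat{A}(T(M|B))^2 .
\]
It then remains to identify the right-hand side with $(2\pi i)^{-n}\int_{M|B}\hat{A}(T(M|B))$.

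Next I would factor the fibre integration over $B$ through the tangent projection $\pi_M:T(M|B)\to M$, writing $\int_{T(M|B)|B}=\int_{M|B}\circ\int_{T(M|B)|M}$, the inner integral running over the $2n$-dimensional tangent fibres. Since $\hat{A}(T(M|B))^2$ is pulled back from $M$ it passes through the inner integration, so
\[
\mathrm{Ind}^{M|B}_a(\cancel{\partial}_{M|B}^+)=(2\pi i)^{-2n}\int_{M|B}\Big(\int_{T(M|B)|M}\Ch_e\big(\iota[\sigma(\cancel{\partial}_{M|B}^+)]\big)\Big)\wedge\hat{A}(T(M|B))^2 .
\]

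The core of the argument is the evaluation of this inner integral. Since $e=\Id$, the twisted Chern character $\Ch_e$ is the ordinary equivariant Chern character, which by Remark \ref{rem:Paradan:chern} may be represented through the connections $\nabla^\pm$ on the pulled-back half-spinor bundles $\tilde{\mathbb{S}}^\pm$ induced by the Levi-Civita connection. Carrying out the Gaussian (Mathai--Quillen / Getzler-rescaling) computation of the Chern character of the Clifford symbol fibrewise over $M$, exactly as in the non-family case treated in \cite{MMS1} and as in \cite[Ch.~4]{BGV}, one obtains
\[
\int_{T(M|B)|M}\Ch_e\big(\iota[\sigma(\cancel{\partial}_{M|B}^+)]\big)=(2\pi i)^{n}\,\hat{A}(T(M|B))^{-1}\in\maH_{dR}^{ev}(M).
\]
The point is that the Clifford symbol represents, up to the standard normalisation, the Bott element of the $\Spin^c$-structure on $T(M|B)\oplus T(M|B)$, whose fibre Chern character is the inverse $\hat{A}$-class; as the fibres $M_b$, their metrics and the densities $\mu_b$ vary smoothly with $b\in B$, the classical pointwise identity applies verbatim along the fibration.

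Substituting this identity into the previous display collapses the powers of $2\pi i$ and one factor of $\hat{A}(T(M|B))$:
\[
\mathrm{Ind}^{M|B}_a(\cancel{\partial}_{M|B}^+)=(2\pi i)^{-n}\int_{M|B}\hat{A}(T(M|B))^{-1}\wedge\hat{A}(T(M|B))^2=(2\pi i)^{-n}\int_{M|B}\hat{A}(T(M|B)),
\]
which is the claim. I expect the only genuinely delicate point to be the local computation of the fibre Chern character of the Clifford symbol, together with the careful bookkeeping of the normalisation constants and of the sign $(-1)^n$ coming from $\Td(T(M|B)\otimes\C)=(-1)^n\hat{A}(T(M|B))^2$; all of this is standard and is handled precisely as in \cite{MMS1,MMS2} and \cite[Ch.~4]{BGV}, and nothing new arises in the family setting because every step is fibrewise over the compact base $B$.
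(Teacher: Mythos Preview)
Your proposal is correct and follows essentially the same route as the paper: both apply Theorem \ref{thm.index.proj} and then reduce everything to the classical computation of the Chern character of the vertical Clifford symbol. The paper states this computation in the form $\Ch_{\Id}(\sigma(\cancel{\partial}_{\maP|B}^+))=(2i\pi)^{n}\hat{A}(T(M|B))^{-1}\wedge\operatorname{Thom}(T(M|B))$ and then integrates, which is equivalent to your factored integration $\int_{T(M|B)|M}\Ch_e=(2\pi i)^n\hat{A}(T(M|B))^{-1}$.

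The one place where the paper is more explicit than your sketch is in justifying why the equivariant Chern character on $T_{SU(2^n)}(\maP|B)$ descends, under the Chern--Weil isomorphism of Definition \ref{def:Chern:twist}, to the ordinary Chern character of a Clifford symbol on $T(M|B)$. The paper checks that the moment $\mu^{\nabla}(X)$ of the induced connections on $\tilde{\mathbb{S}}^\pm$ vanishes (so the equivariant Chern character is actually independent of $X\in\mathfrak{su}(2^n)$), and then passes to local trivialisations of $\maF$ and $\maP$ to identify the descended class with the standard local model. Your citation of \cite{MMS1,MMS2} and \cite[Ch.~4]{BGV} covers this, but it is worth being aware that the vanishing of the moment map is the mechanism that makes the ``nothing new arises in the family setting'' claim precise here.
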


\begin{proof}
Using Theorem \ref{thm.index.proj}, we see that we need to show that 
$$\Ch_{\Id}(\sigma(\cancel{\partial}_{\maP|B}^+))=(2i\pi)^{n}\hat{A}(T(M|B))^{-1} \wedge \operatorname{Thom}(T(M|B)).$$
This follows from the fact that the curvature 
of $\bar{\theta}$ (respectively its image by $U_*$) corresponds to the curvature of $\theta$ 
(respectively to its image by $\mathfrak{so}(2n) \rightarrow \mathfrak{spin}(2n)$) 
and the standard computation of the Chern charater of the symbol of families of Dirac operators. 
First notice that the moment map $\mu^{\nabla}(X)$ vanishes because 
$\mu^{\nabla}(X)=\mathscr{L}^{\tilde{\mathbb{S}}}(X)-\nabla_X=U_*(X)-U_*(\bar{\theta}(X_\maP))=0$.
It follows that $\mu^{\nabla^\pm}(X)=0$ because $\tilde{\omega}_\C$ is $SU(2^n)$-invariant.
Now let $(U_i)_{i\in I}$ be a finite cover of $M$ of trivialisations 
$\mathscr{E}_i : U_i \times SO(2n) \rightarrow \maF=F^{SO}(M|B)$ of the bundle of oriented orthonormal frames of $T(M|B)$
and let $\phi_i : U_i \times PU(2^n) \rightarrow \maP$ be the induced trivialisations of $\maP$.
We shall denote again by $\mathscr{E}_i : U_i \rightarrow \maF$ the section given by $\mathscr{E}_i(x)=\mathscr{E}_i(x,\Id)$ and write 
$\mathscr{E}_i(x,R)=\mathscr{E}_i(x)\circ R=\mathscr{E}_i(x) R$.
Let $(f_i)_{i\in I}$ be a partition of unity subordinated to $(U_i)$. We can then write
$$\Ch_{\Id}^{PU(2^n)}(\sigma(\cancel{\partial}_{\maP|B}^+))(X)=\sum f_i (d\phi_i^{-1})^*(d\phi_i)^*\Ch_e^{PU(2^n)}(\sigma(\cancel{\partial}_{\maP|B}^+))(X),$$
which does not depend on $X\in \mathfrak{su}(2^n)$ since the moment maps vanish. 
Denote by $\tilde{\mathscr{E}}_i : U_i \times \R^{2n} \rightarrow TU_i$ the trivialisation given by 
$\tilde{\mathscr{E}}_i(x,v)=\mathscr{E}_i(x)(v)$.
We have that 
$$(d\phi_i)^* (\sigma(\cancel{\partial}_{\maP|B}^+))(x,w,A)=\hat{A}^{-1}\hat{c}(\mathscr{E}_i^{-1}(x)w) \hat{A}=\hat{A}^{-1} ((\tilde{\mathscr{E}}^{-1}_i)^*\hat{c})(x,w) \hat{A},$$ 
for any $(x,w,A) \in T(U_i|B) \times PU(2^n)$. Now $(\tilde{\mathscr{E}}^{-1}_i)^*\hat{c}$ is 
just the Clifford multiplication for the trivial bundle $T(U_i|B)$ 
acting on the trivial half $Spin$ bundle $U_i \times S^\pm$ and the curvature 
of $\bar{\theta}$ (respectively its image by $U_*$) corresponds to the curvature of $\theta$ 
(respectively to its image by $\mathfrak{so}(2n) \rightarrow \mathfrak{spin}(2n)$) 
therefore using the Chern-Weil isomorphism 
$CW : \maH_{c,PU(2^n)}^\infty(\frak{su}(2^n),T(\maP_{\vert_{ U_i}}|B)) \rightarrow \maH_{c,dR}(T(U_i|B))$, we obtain that 
$$f_i\ \Ch_{\Id}((d\phi_i)^* (\sigma(\cancel{\partial}_{\maP|B}^+))=f_i\ \Ch((\tilde{\mathscr{E}}_i^{-1})^*\hat{c})=f_i\ (2i\pi)^{n}\hat{A}(T(U_i|B))^{-1} \wedge \operatorname{Thom}(T(U_i|B)),
$$
where $\operatorname{Thom}(T(U_i|B))$ is the Thom form of the bundle $T(M|B)\vert_{U_i} \rightarrow U_i$.
Finally, we get 
\begin{align*}
\Ch_{\Id}(\sigma(\cancel{\partial}_{\maP|B}^+))&=\sum f_i\  (2i\pi)^{n}\hat{A}(T(U_i|B))^{-1} \wedge \operatorname{Thom}(T(U_i|B))\\
&=(2i\pi)^{n}\sum f_i\  \big(\hat{A}(T(M|B))^{-1} \wedge \operatorname{Thom}(T(M|B))\big)\vert_{(T(M|B)\vert_{U_i})}\\
&=(2i\pi)^{n}\hat{A}(T(M|B))^{-1} \wedge \operatorname{Thom}(T(M|B)).
\end{align*}
\end{proof}

\begin{remarque}
The equality $\Ch_{\Id}(\sigma(\cancel{\partial}_{\maP|B}^+))=(2i\pi)^{n}\hat{A}(T(M|B))^{-1} \wedge \operatorname{Thom}(T(M|B))$ 
can also be shown by twisting the class $[\sigma(\cancel{\partial}_{\maP|B})]$ by the $SU(2^n)$-equivariant vector bundle 
$\maP \times \tilde{\mathbb{S}}^*$ and looking at the standard formulae since the twisted symbol corresponds to the $SU(2^n)$-equivariant symbol 
of the family of signature operators. 
\end{remarque}

\section{Families of $Spin(2n)$-transversally elliptic Dirac operators }
In this section, we discuss the application of Theorem \ref{thm.main.paradan} 
given by a family of $Spin(2n)$-transversally elliptic operators over the bundle 
of oriented orthonormal frames of an oriented fibration with even dimensional fibres.
The motivation for the study of the index of families of $Spin(2n)$-transversally elliptic Dirac operators 
comes from the index of families of projective Dirac operators. 
Indeed, we will see that the index of families of projective Dirac operators is captured by the index of families of $Spin(2n)$-transversally elliptic Dirac operators. This was already noticed in \cite{Mathai.genera,Paradan:projective}. 
 
%More precisely, let 
Let $p:M\rightarrow B$ be a fibration of compact manifolds as before and 
assume that $T(M|B)$ is oriented and that $\dim (M|B)=\dim M -\dim B=2n$. 
Recall that $B$ is also assumed to be oriented. 
Let $\C l(M|B)$ be the complexified Clifford bundle of $T(M|B)$ as before.
Assume that the metric $g_M$ on $M$ is constructed from the pull back of a metric $g_B$ on $B$ 
and a metric on $T(M|B)$, i.e. $g_M=g_{M|B}\oplus p^*g_B$.

Let $q : \maF=F^{SO}(M|B) \rightarrow M$ be the bundle of oriented orthonormal frames of $T(M|B)$.
Let $\nabla^{LC}$ be the Levi-Civita connection on $M$ and consider the connection
$\nabla^{LCV}=\pi^V\circ \nabla^{LC}$ on $T(M|B)$, 
where $\pi^V : TM \rightarrow T(M|B)$ is the projection induced by the metric on $M$. 
Then $\nabla^{LCV}$ induces the Levi-Civita connection on each fibre $M_b$.
Denote by $\theta$ the induced connection $1$-form on $\maF=F^{SO}(M|B)$ and equip 
$\maF$ with the metric $g_{\maF}=q^*g_M + \langle \theta , \theta \rangle_{\mathfrak{so}(2n)}$, 
where $\langle \cdot , \cdot \rangle_{\mathfrak{so}(2n)}$ is 
a $Ad$-invariant metric on the Lie algebra $\mathfrak{so}(2n)$ of $SO(2n)$.

We have the following trivialisation of $T^*_{SO(2n)}(\maF|B)$, see \cite{Paradan:projective} for the case $B=\{b\}$ 
(i.e. when $B$ is reduced to a point).
\begin{lem}
\begin{enumerate}
\item The map $\alpha : T^*_{SO(2n)}(\maF|B) \rightarrow \maF \times (\R^{2n})^*$ given by 
$\alpha(\mathscr{E},\xi) = (\mathscr{E}, \xi \circ (d_{\mathscr{E}}q)^{-1} \circ \mathscr{E})$
is an $SO(2n)$-equivariant isomorphism. 
We shall denote by $\alpha_{\mathscr{E}} : (T^*_{SO(2n)}\maF)_{\mathscr{E}} \rightarrow (\R^{2n})^*$ the induced map.
\item The map $\alpha$ induces an isomorphism $\tilde{\alpha}$
between the bundle of oriented orthonormal frames of $T_{SO(2n)}(\maF|B)$ and $\maF \times SO(2n)$.
\item The bundle $T_{SO(2n)}(\maF|B)$ has a spin structure. 
\end{enumerate}

\end{lem}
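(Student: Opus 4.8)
The guiding idea is that $\maF$ is, by its very definition, the total space of the bundle of oriented orthonormal frames of $T(M|B)$, so that $q^{*}T(M|B)$ carries a tautological trivialisation, while under the Levi-Civita splitting the vertical $SO(2n)$-transversal bundle of $\maF$ is precisely that pullback. All three statements will follow by unwinding this.

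I would first identify $T_{SO(2n)}(\maF|B)$. As in the discussion at the start of Section~\ref{sec.gen.Paradan}, since $SO(2n)$ acts freely on $\maF$ the map $\maF\times\mathfrak{so}(2n)\to T\maF$ is an isomorphism onto $\ker(dq)=T(\maF|M)$, and $T_{SO(2n)}\maF$ is its orthogonal complement, i.e.\ the horizontal distribution $\ker\theta$. Because $g_{\maF}=q^{*}g_{M}+\langle\theta,\theta\rangle_{\mathfrak{so}(2n)}$, the metric $g_{\maF}$ restricts on $\ker\theta$ to $q^{*}g_{M}$, so $d_{\mathscr{E}}q$ is an orientation-preserving isometry of $T_{SO(2n),\mathscr{E}}\maF$ onto $T_{q(\mathscr{E})}M$. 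Intersecting with $T(\maF|B)=\ker d(p\circ q)=(dq)^{-1}(T(M|B))$ shows that $d_{\mathscr{E}}q$ restricts further to an orientation-preserving isometry $T_{SO(2n),\mathscr{E}}(\maF|B)\xrightarrow{\sim}T_{q(\mathscr{E})}(M|B)$; in particular $(d_{\mathscr{E}}q)^{-1}$ appearing in the statement is well defined.

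Next, a point $\mathscr{E}\in\maF$ over $x\in M$ is by definition an orientation-preserving isometry $\mathscr{E}\colon\R^{2n}\to T_{x}(M|B)$, hence defines the tautological trivialisation $q^{*}T(M|B)\cong\maF\times\R^{2n}$, which is fibrewise isometric and orientation-preserving for the standard Euclidean structure on $\R^{2n}$. Dualising this trivialisation and precomposing with $((d_{\mathscr{E}}q)^{-1})^{*}$ gives exactly the fibrewise map $\alpha_{\mathscr{E}}=\mathscr{E}^{*}\circ((d_{\mathscr{E}}q)^{-1})^{*}$, i.e.\ $\alpha_{\mathscr{E}}(\xi)=\xi\circ(d_{\mathscr{E}}q)^{-1}\circ\mathscr{E}$; this is a linear isomorphism and $\alpha$ covers $\mathrm{id}_{\maF}$, so $\alpha$ is a bundle isomorphism. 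For $SO(2n)$-equivariance I would check, using $\mathscr{E}\cdot R=\mathscr{E}\circ R$ and the naturality of $dq$ under the $SO(2n)$-action, that $\alpha$ intertwines the action lifted to $T^{*}_{SO(2n)}(\maF|B)$ with the natural contragredient action $(\mathscr{E},\eta)\cdot R=(\mathscr{E}R,\eta\circ R)$ on $\maF\times(\R^{2n})^{*}$. This proves (1).

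Parts (2) and (3) are then formal. An orientation-preserving isometric isomorphism of oriented Euclidean vector bundles induces an isomorphism of the associated bundles of oriented orthonormal frames; applied to the tangent version of $\alpha$ (identifying $T^{*}$ with $T$ and $(\R^{2n})^{*}$ with $\R^{2n}$ by the metrics), and using that the oriented orthonormal frame bundle of the trivial bundle $\maF\times\R^{2n}$ is the trivial $SO(2n)$-principal bundle $\maF\times SO(2n)$, this yields the isomorphism $\tilde\alpha$ of (2). For (3), the trivial $SO(2n)$-bundle $\maF\times SO(2n)$ carries the trivial spin structure $\maF\times Spin(2n)\xrightarrow{\mathrm{id}\times\lambda}\maF\times SO(2n)$, with $\lambda$ the double cover; transporting it through $\tilde\alpha^{-1}$ gives a spin structure on $T_{SO(2n)}(\maF|B)$, and I would note here that it is genuinely $Spin(2n)$-equivariant with $\Z_{2}=\ker\lambda$ acting by $\pm1$ on spinors — exactly the feature that makes the results of Section~\ref{sec.gen.Paradan} for the central extension $1\to\Z_{2}\to Spin(2n)\to SO(2n)\to1$ applicable. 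The whole argument is bookkeeping; the only mildly delicate points are that $dq$ restricts to an \emph{isometry} $T_{SO(2n)}(\maF|B)\to T(M|B)$ (which uses the particular shape $g_{\maF}=q^{*}g_{M}+\langle\theta,\theta\rangle$, not an arbitrary invariant metric) and tracking the variance of the $SO(2n)$-action so that $\alpha$ comes out equivariant, and neither is a genuine obstacle.
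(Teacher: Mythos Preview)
Your proof is correct and follows essentially the same approach as the paper: both identify $T_{SO(2n)}(\maF|B)$ with $q^*T(M|B)$ via $dq$, use the tautological trivialisation coming from the frame $\mathscr{E}$, verify equivariance via $q\circ r_R=q$, and then deduce (2) and (3) formally from the resulting trivialisation. The paper's proof is terser and writes out the explicit formula $\tilde{\alpha}(\mathscr{E},\mathscr{W})=(\mathscr{E},\mathscr{W}^{-1}\circ(d_{\mathscr{E}}q)^{-1}\circ\mathscr{E})$ for (2), while you are more careful in making explicit that $d_{\mathscr{E}}q$ is an orientation-preserving \emph{isometry} because of the particular form $g_{\maF}=q^*g_M+\langle\theta,\theta\rangle_{\mathfrak{so}(2n)}$ --- a point the paper leaves implicit but which is indeed needed for $\tilde\alpha$ to land in $\maF\times SO(2n)$ rather than merely $\maF\times GL^+(2n)$.
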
 
 
\begin{proof}
(i) Recall that the map $d_{\mathscr{E}}q : T_{SO(2n)}(\maF|B)_{\mathscr{E}} \rightarrow T_{q(\mathscr{E})}(M|B)$ 
is an isomorphism for any $\mathscr{E} \in \maF$. By definition $\mathscr{E}$  
gives an isomorphism $\mathscr{E} : \R^{2n} \rightarrow T_{q(\mathscr{E})}(M|B)$. 
Furthermore, $\alpha$ is $SO(2n)$-equivariant because for any $R\in SO(2n)$ we have,
\begin{align*}
\alpha(\mathscr{E}\circ R , ^tdr_{R^{-1}}\xi)&=(\mathscr{E}\circ R , \xi \circ d_{\mathscr{E}}r_{R}^{-1} \circ (d_{\mathscr{E}\circ R}q)^{-1} \circ \mathscr{E}\circ R) \\
&=(\mathscr{E}\circ R , \xi \circ (d_{\mathscr{E}}(q\circ r_{R}))^{-1} \circ \mathscr{E}\circ R) \\
&=(\mathscr{E}\circ R , \xi \circ (d_{\mathscr{E}}q)^{-1} \circ \mathscr{E}\circ R)\\
&=\alpha(\mathscr{E} , \xi \circ (d_{\mathscr{E}}q)^{-1} \circ \mathscr{E})\cdot R,
\end{align*}
where $r_R$ denotes the right action of $R$.

(ii) This follows directly from (i). Indeed, if $(\mathscr{E}, \mathscr{W})$ is a frame of $T_{SO(2n)}(\maF|B)$ then 
$\tilde{\alpha}(\mathscr{E}, \mathscr{W})=(\mathscr{E}, \mathscr{W}^{-1} \circ (d_{\mathscr{E}}q)^{-1} \circ \mathscr{E})  \in \maF \times  SO(2n)$.

(iii) The map $\maF \times Spin(2n) \rightarrow \maF \times SO(2n)$ defines a spin structure.
\end{proof}

Let us denote by $S^\pm$ the half spinor associated with $Spin(2n)$ 
and  by $F^{Spin}(\maF|B) = \maF \times Spin(2n)$ the bundle of spin frames.

We consider the spin vector bundles 
$\mathbb{S}^\pm(\maF|B)=F^{spin}(\maF|B) \times_{Spin(2n)} S^\pm=\maF \times S^\pm \rightarrow \maF$ 
associated with the spin structure.
The Clifford multiplication $c : T^*_{SO(2n)}(\maF|B) \rightarrow \Hom(S^+,S^-)$ is 
then given by $c(\mathscr{E},\xi)=\hat{c}(\alpha_{\mathscr{E}}(\xi))$, 
where $\hat{c} : (\R^n)^* \rightarrow \mathbb{C}l(2n)$ is the Clifford multiplication  
and this is $Spin(2n)$-equivariant with respect to the action induced by $\zeta : Spin(2n) \rightarrow SO(2n)$. 
In other words, for any $g \in Spin(2n)$ we have 
$c((\mathscr{E},\xi) \cdot g)=c(\mathscr{E}\circ \zeta(g) , ^tdr_{\zeta(g)^{-1}}(\xi))
=\hat{c}(^t\zeta(g)\alpha_{\mathscr{E}}(\xi))=g^{-1}\hat{c}(\alpha_{\mathscr{E}}(\xi))g$, 
where the last equality follows 
from the fact that the inclusion $SO(2n) \hookrightarrow Aut(\mathbb{C}l(2n))=PU(2^n)$ 
is through the spin group, see \cite{MMS1,SpinGeometry}.
Notice that this defines a symbol in $K_{Spin(2n)}(T^*_{Spin(2n)}(\maF|B))=K_{Spin(2n)}(T^*_{SO(2n)}(\maF|B))$.

Let us now introduce the family of $Spin(2n)$-transversally elliptic 
operators associated with the previous symbol.
The connection $\theta$ on $\maF$ induces connections $\nabla^{\pm}$ 
on the half spinor bundles $\mathbb{S}^{\pm}(\maF|B)$, see \cite{SpinGeometry} for instance.  
We then define the family $\cancel{\partial}_{\maF|B}^+$ 
of $Spin(2^n)$-transversally elliptic Dirac operators on $\maF$ by
$$\cancel{\partial}_{\maF|B}^+:=\sum c(e_i)\nabla_{e_i}^+ : C^\infty(\maF,S^+) \rightarrow C^\infty(\maF,S^-),$$  
where $c(e_i)$ is the Clifford multiplication and $(e_i)$ is any local orthonormal basis of $q^*T(M|B)\cong T_{SO(2n)}(\maF|B)$, 
see  \cite{Paradan:projective, Yamashita}.

\begin{thm}\label{thm.ind.dirac}
The distibutional index $\mathrm{Ind}^{\maF|B}_{-\infty}(\cancel{\partial}_{\maF|B}^+)$ of $\cancel{\partial}_{\maF|B}^+$ is given by
$$\mathrm{Ind}^{\maF|B}_{-\infty}(\cancel{\partial}_{\maF|B}^+)=T_{M|B}\ast \delta_{\Id} - T_{M|B} \ast \delta_{-\Id},$$
where $T_{M|B}=(2\pi i )^{-n} \exp_* \int_{M|B} \hat{A}(T(M|B)) \wedge e^{\Theta}$ and $\Theta$ is the curvature of the $SO(2n)$-principal bundle
$\maF \rightarrow M$.
In particular, if $\varphi \in C^\infty(Spin(2n))^{Ad(Spin(2n))}$ is a function equal to $1$ around $\Id$ and $0$ around $-\Id$ then 
%The index of the family $\cancel{\partial}_{M|B}^+$ of Dirac operators is given by 
$$\langle \mathrm{Ind}^{\maF|B}_{-\infty}(\cancel{\partial}_{\maF|B}^+), \varphi \rangle =(2\pi i)^{-n}\int_{M|B} \hat{A}(T(M|B)) \in \maH^{ev}_{dR}(B).$$
\end{thm}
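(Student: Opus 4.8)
The plan is to apply Theorem~\ref{thm.main.paradan} to the central extension $1 \to \Z_2 \to Spin(2n) \to SO(2n) \to 1$ (with quotient map $\zeta$), taking $G = SO(2n)$, $\tilde G = Spin(2n)$, $\Gamma = \Z_2 = \{\Id,-\Id\}$, $P = \maF = F^{SO}(M|B)$ the $SO(2n)$-principal bundle over $M$, and $\sigma = \sigma(\cancel{\partial}_{\maF|B}^+) \in K_{Spin(2n)}(T_{SO(2n)}(\maF|B))$, which is the Clifford-multiplication symbol. Since $\dim(M|B) = 2n$, Theorem~\ref{thm.main.paradan} immediately gives
$$\mathrm{Ind}^{\maF|B}_{-\infty}(\cancel{\partial}_{\maF|B}^+) = T_{\Id}(\sigma)\ast\delta_{\Id} + T_{-\Id}(\sigma)\ast\delta_{-\Id},$$
so everything reduces to computing the two coefficients $T_{\Id}(\sigma)$ and $T_{-\Id}(\sigma)$.

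First I would determine the $\Gamma$-decomposition of $\sigma$. The nontrivial element $-1 \in Spin(2n)$ lies in $\ker\zeta$ and acts on the half-spinor modules $S^{\pm}$ by the scalar $-1$; hence $\Z_2$ acts on the vertical projective half-spinor bundles $\mathbb{S}^{\pm}(\maF|B)$ through the sign character $\epsilon \in \hat\Gamma$, so $\sigma = \sigma_\epsilon$ and $\sigma_1 = 0$. The decomposition $\Ch_\gamma(\sigma) = \sum_{\chi\in\hat\Gamma}\chi(\gamma)\,\Ch_e(\sigma_\chi)$ recalled before Corollary~\ref{cor:Paradan:famille} then yields $\Ch_{\Id}(\sigma) = \Ch_e(\sigma)$ and $\Ch_{-\Id}(\sigma) = -\Ch_e(\sigma)$, whence $T_{-\Id}(\sigma) = -T_{\Id}(\sigma)$. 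Setting $T_{M|B} := T_{\Id}(\sigma)$ already produces the asserted shape $T_{M|B}\ast\delta_{\Id} - T_{M|B}\ast\delta_{-\Id}$.

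Next I would compute $\Ch_e(\sigma)$, the only genuinely analytic input. Proceeding as in the proof of Corollary~\ref{cor.ind.proj.dirac}: one identifies $T_{SO(2n)}(\maF|B)\cong q^{*}T(M|B)$, checks that the moment map of the connection $\nabla^{\pm}$ induced by $\theta$ vanishes (so $\Ch_e(\sigma)$ is independent of $X\in\mathfrak{so}(2n)$), and uses that the curvature of $\theta$ corresponds to the curvature $\Theta$ of $\nabla^{LCV}$ to run the standard local computation of the Chern character of a Dirac symbol through the Chern--Weil isomorphism $\maH^{\infty}_{c,SO(2n)}(\mathfrak{so}(2n),T_{SO(2n)}(\maF|B))\to\maH_{dR,c}(T(M|B))$. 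This gives $\Ch_e(\sigma) = (2i\pi)^{n}\,\hat{A}(T(M|B))^{-1}\wedge\operatorname{Thom}(T(M|B))$. Substituting into the formula for $T_{\Id}(\sigma)$ from Theorem~\ref{thm.main.paradan} with $\dim(M|B) = 2n$, the powers of $2i\pi$ collapse to $(2i\pi)^{-n}$ and
$$T_{M|B} = (2i\pi)^{-n}\exp_{*}\int_{T(M|B)|B}\operatorname{Thom}(T(M|B))\wedge\hat{A}(T(M|B))\wedge e^{\Theta}.$$
Factoring the fibre integration through the bundle projection $T(M|B)\to M$ and applying the Thom isomorphism to the factor $\hat{A}(T(M|B))\wedge e^{\Theta}$, which is pulled back from $M$, leaves $T_{M|B} = (2\pi i)^{-n}\exp_{*}\int_{M|B}\hat{A}(T(M|B))\wedge e^{\Theta}$, as claimed. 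I expect this Chern-character step to be the main obstacle: although ``standard'', carrying it out carefully — the trivialisation $\alpha$ of $T^{*}_{SO(2n)}(\maF|B)$, the spin structure on $T_{SO(2n)}(\maF|B)$, and the matching of curvatures — is the technical heart, essentially a reprise of Corollary~\ref{cor.ind.proj.dirac}.

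Finally, for the pairing with $\varphi$: since $T_{M|B}\ast\delta_{-\Id}$ is supported at $-\Id$ and $\varphi$ vanishes there, only the $\delta_{\Id}$-term survives, and $T_{M|B}\ast\delta_{\Id} = T_{M|B}$. Because $\varphi$ is $Ad(Spin(2n))$-invariant, $\varphi\circ\exp$ is $Ad$-invariant on $\mathfrak{so}(2n)$ and equal to $1$ near $0$, so its Chern--Weil value $(\varphi\circ\exp)(\Theta)$ equals $1$ in cohomology by Remark~\ref{rem.phiTheta=1}; unwinding the definition of $\exp_{*}$ and of the pairing attached to $e^{\Theta}$ (Definition~\ref{def.eTheta}) then gives $\langle\mathrm{Ind}^{\maF|B}_{-\infty}(\cancel{\partial}_{\maF|B}^+),\varphi\rangle = (2\pi i)^{-n}\int_{M|B}\hat{A}(T(M|B))\in\maH^{ev}_{dR}(B)$.
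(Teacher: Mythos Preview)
Your proposal is correct and follows essentially the same route as the paper: apply Theorem~\ref{thm.main.paradan} to the extension $1\to\Z_2\to Spin(2n)\to SO(2n)\to 1$, use that $-\Id$ acts by $-1$ on $S^\pm$ to get $\Ch_{-\Id}(\sigma)=-\Ch_{\Id}(\sigma)$, and reduce to identifying $\Ch_{\Id}(\sigma)=(2i\pi)^n\hat A(T(M|B))^{-1}\wedge\operatorname{Thom}(T(M|B))$. The only minor difference is in this last identification: you invoke the local, partition-of-unity computation of Corollary~\ref{cor.ind.proj.dirac}, whereas the paper, after noting that this route works, instead exploits the spin structure on $T_{SO(2n)}(\maF|B)$ directly (via the equivariant Dirac-symbol formula \cite[Prop.~7.43]{BGV}) together with the vanishing of the moment map for the pull-back connection; both arguments are standard and yield the same conclusion.
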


\begin{proof}
\noindent
Recall the central extension
$$1 \rightarrow \mathbb{Z}_2 \rightarrow Spin(2n) \rightarrow SO(2n) \rightarrow 1.$$
Applying Theorem \ref{thm.main.paradan}, we get that 
 $\mathrm{Ind}^{\maF|B}_{-\infty}(\cancel{\partial}_{\maF|B}^+)= T_{\Id}(\sigma(\cancel{\partial}_{\maP|B}^+))\ast \delta_{\Id} 
 + T_{-\Id}(\sigma(\cancel{\partial}_{\maP|B}^+)\ast \delta_{-\Id}$, where
$$T_\gamma(\sigma(\cancel{\partial}_{\maP|B}^+))=(2i\pi)^{-\dim (M | B)}\exp_*\Big(\int_{T(M|B)|B} \Ch_\gamma(\sigma(\cancel{\partial}_{\maP|B}^+))\wedge\hat{A}(T(M|B))^2\wedge e^\Theta\Big)$$
and where $\Ch_\gamma(\sigma(\cancel{\partial}_{\maP|B}^+))$ is the twisted Chern character, see Definition \ref{def:Chern:twist}.

Since $\Ch_{\Id}(\sigma(\cancel{\partial}_{\maP|B}^+))=-\Ch_{-\Id}(\sigma(\cancel{\partial}_{\maP|B}^+))$, 
we only need to show that 
$$\Ch_{\Id}(\sigma(\cancel{\partial}_{\maP|B}^+))=(2i\pi)^{n}\hat{A}(T(M|B))^{-1} \wedge \operatorname{Thom}(T(M|B)).$$
This can be obtained as in the proof of Corollary \ref{cor.ind.proj.dirac}. 
Let us give an other proof based on the $Spin$ structure, see \cite{BGV,ParadanV:equiThomAndChern}

Recall that the vector bundle $T_{SO(2n)}(\maF|B)=T_{Spin(2n)}(\maF|B)$ is spin therefore using \cite[Proposition 7.43]{BGV} 
$$(2i\pi)^{-n}\Ch_{\Id}^{Spin(2n)}(\sigma(\cancel{\partial}_{\maF|B}^+))(X)
=\hat{A}(T_{SO(2n)}(\maF|B))(X)^{-1} \wedge \operatorname{Thom}(T_{SO(2n)}(\maF|B))(X),$$
where $\operatorname{Thom}(T_{SO(2n)}(\maF|B))(X)$ denotes the Thom form in equivariant cohomology, 
see also \cite{ParadanV:equiThomAndChern}.
Now recalling the identification $T_{SO(2n)}(\maF|B)=q^*T(M|B)$, 
we get 
\begin{align*}
\hat{A}(T_{SO(2n)}(\maF|B))(X)&=q^*\hat{A}(T(M|B))\otimes \phi(X),\qquad \mbox{and},\\
\operatorname{Thom}(T_{SO(2n)}(\maF|B))(X)&=q^*\operatorname{Thom}(T(M|B))\otimes \phi(X),
\end{align*} 
where $\operatorname{Thom}(T(M|B))$ is the Thom form and $\phi$ is equal to $1$ 
on a small neighbourhood of $0\in \mathfrak{spin}(2n)$. In fact, $\phi$ can be taken constant equal to $1$
because here if we equipped $T_{SO(2n)}(\maF|B)$ with the pull-back connection $\pi^*\nabla$ 
of a connection $\nabla$ on $T(M|B)$ then the moment $\mu^{T_{SO(2^n)}(\maF|B)}(X)=0$.
Indeed, here $\mathscr{L}^{T_{SO(2n)}(\maF|B)}(X)$ coincides with $X_\maF$ and $(\pi^*\nabla)_{X}$ coincides also with
$X_\maF$ therefore $\mu^{T_{SO(2n)}(\maF|B)}(X)=\mathscr{L}^{T_{SO(2n)}(\maF|B)}(X)-(\pi^*\nabla)_{X}=0$.
Recall that the image of $\phi$ through the Chern-Weil morphism gives $1$ in cohomology.
Applying the Chern-Weil isomorphism, it follows 
$$\Ch_{\Id}(\sigma(\cancel{\partial}_{\maF|B}^+))=(2i\pi)^{n}\hat{A}(T(M|B))^{-1} \wedge \operatorname{Thom}(T(M|B)).$$
This complete the proof.
%We eventually get the result.
% using Theorem \ref{thm.index.proj}.
\end{proof}
 
%\begin{remarque} 
%As a corollary from the proof and the construction of the family of Spin projective operators, 
%we see that it is enough to work on the bundle $\maF$ of oriented orthonormal frame with the family of 
%$Spin(2n)$-transversally elliptic operators. This was already noticed in \cite{Mathai.genera,Paradan:projective}. 
%\end{remarque}
 
\begin{remarque}\label{rem.Dirac.proj.spin}
As a corollary, we obtain that the index of a family of projective Dirac operators 
can be computed from the index of the corresponding family of $Spin(2n)$-transversally elliptic Dirac operators on $\maF=F^{SO}(M|B)$.
More precisely, 
 if $\varphi \in C^\infty(Spin(2n))^{Ad(Spin(2n))}$ is a function equal to $1$ around $\Id$ and $0$ around $-\Id$ then 
%The index of the family $\cancel{\partial}_{M|B}^+$ of Dirac operators is given by 
$$\mathrm{Ind}^{M|B}_a(\cancel{\partial}^+_{M|B})=\langle \mathrm{Ind}^{\maF|B}_{-\infty}(\cancel{\partial}_{\maF|B}^+), \varphi \rangle =(2\pi i)^{-n}\int_{M|B} \hat{A}(T(M|B)) \in \maH^{ev}_{dR}(B).$$
The family $\cancel{\partial}_{\maF|B}^+$ of $Spin(2n)$-transversally elliptic operators  
induces in some sense the family of $SU(2^n)$-transversally elliptic operators.
Indeed, the connection on $\mathbb{S}(\maF|B)$ is given by $\nabla=d+\zeta_*^{-1}(\theta)$, 
where $\zeta_* : \mathfrak{spin}(2n) \rightarrow \mathfrak{so}(2n)$. 
If we consider the push forward map $q_*$ defined as in Theorem \ref{thm.MMS2.projective.trans}, 
but now replacing $\pi : \maP\rightarrow M$ by $q : \maF \rightarrow M$ 
and the central extension of $PU(2^n)$ by the central extension of $SO(2n)$, 
then we recover the family $\cancel{\partial}_{M|B}^+$ of projective Dirac operators from 
the family $\cancel{\partial}_{\maF|B}^+$ of $Spin(2n)$-transversally elliptic operators. 
In other words, the pull back to $\maF$ of the family $\cancel{\partial}_{M|B}^+$ of projective Dirac operators is 
the family $\cancel{\partial}_{\maF|B}^+$ of $Spin(2n)$-transversally elliptic operators, 
see also \cite{Mathai.genera,Paradan:projective}.       
\end{remarque}

 \subsection{Example}
We now discuss an example. Let us consider the complex projective plan $ \mathbb{CP}^2$ equipped 
 with the Fubini-Study metric $g^{FS}$. Recall that the Fubini-Study metric is given in affine 
 charts using complex coordinates $(z_1,\bar{z}_1,z_2, \bar{z}_2)$  by
 $$g_{i\bar{j}}^{FS}=g^{FS}(\partial_{z_i},\partial_{\bar{z}_j}) = \frac{(1+|z|^2)\delta_{i\bar{j}} -\bar{z_i}z_j}{(1+|z|^2)^2},$$
 and $g_{ij}^{FS}=g^{FS} (\partial_{z_i},\partial_{z_j})=0$.
We let $S^1$ act isometrically on $\mathbb{CP}^2$ by $\gamma\cdot[z_0,z_1,z_2]=[z_0,z_1,\gamma z_2]$.
Let $S^5 \subset \C^3$ be the unit sphere equipped with the induced metric $g^{S^5}$ from $\C^3$.
Recall the principal $S^1$-bundle $q : S^5 \rightarrow \mathbb{CP}^2$ 
given by the orbits of the diagonal action of $S^1$ on $S^5 \subset \C^3$.
By letting $S^1$ act diagonally on $S^5 \times \mathbb{CP}^2$ 
by $\gamma \cdot ((w_0,w_1,w_2),[z_0,z_1,z_2])=((\gamma w_0,\gamma w_1,\gamma w_2),[z_0,z_1,\gamma z_3])$, 
we obtain a principal $S^1$-bundle $\rho : S^5 \times \mathbb{CP}^2 \rightarrow M:= S^5\times_{S^1} \mathbb{CP}^2$.
Since the metric $g^{S^5 \times \mathbb{CP}^2}= g^{S^5} + g^{\mathbb{CP}^2}$ on $S^5 \times \mathbb{CP}^2$ is $S^1$ invariant, 
we obtain a metric on $M$. Let us now consider the fibration
$p: M:=S^5 \times_{S^1} \mathbb{CP}^2 \rightarrow \mathbb{CP}^2=B$ 
given by $p([x,[z]])=q(x)$. It is then clear that $p$ is also a Riemannian fibration 
since it is obtained from the Riemannian fibration 
$S^5 \times \mathbb{CP}^2 \rightarrow \mathbb{CP}^2$ 
with projection $(x,[z]) \mapsto q(x)=q\circ p_1(x,[z])$.

The vertical tangent bundle $T(M|B)=\ker dp$ is given by $S^5 \times_{S^1} T\mathbb{CP}^2$ 
and the bundle $F^{SO}(M|B)$ of oriented orthonormal frames of $T(M|B)$ is given by $\maF:=F^{SO}(M|B)=S^5 \times_{S^1} F^{SO}(\mathbb{CP}^2)$, 
where $F^{SO}(\mathbb{CP}^2)$ is the bundle of oriented orthonormal frames of $\mathbb{CP}^2$.
Let $\mathbb{C}l(\mathbb{CP}^2)$ be the complexified Clifford bundle 
of $\mathbb{CP}^2$.
% and $\maP(\mathbb{CP}^2)$ its bundle of trivialisations.
Similarly, let $\mathbb{C}l(M|B)$ be the Clifford bundle of $T(M|B)$ 
then $\mathbb{C}l(M|B)=S^5 \times_{S^1} \mathbb{C}l(\mathbb{CP}^2)$.
%We now proceed to consider the family of projective Dirac operators on $M$.
%As noticed in the previous section, it is enough to work with the bundles of oriented orthonormal frames.
 
%Consider then the projective Dirac operator $\cancel{\partial}_{\mathbb{CP}^2}$ on $\mathbb{CP}^2$,
%which can be seen as a fibration of oriented Riemannian manifolds of dimension $2\times 2$ over the point.

Notice that $\mathbb{CP}^2$ can be seen as a fibration of oriented Riemannian manifolds of dimension $2\times 2$ over the point.
Denote the $Spin(4)$-transversally elliptic Dirac 
$\cancel{\partial}_{F^{SO}(\mathbb{CP}^2)}$ associated over the bundle of oriented orthonormal fame.
This allows to define a family $\mathrm{Id}_{S^5} \otimes \cancel{\partial}_{F^{SO}(\mathbb{CP}^2)}$ of differential operators along the fibers of $S^5\times \mathbb{CP}^2 \rightarrow S^5$, see \cite{Atiyah-Singer:IV}.

\begin{lem}
The family $\mathrm{Id}_{S^5} \otimes \cancel{\partial}_{F^{SO}(\mathbb{CP}^2)}$  on $S^5 \times F^{SO}(\mathbb{CP}^2) \rightarrow S^5$ 
is a family of $Spin(4)$-transversally elliptic operators which is $S^1$-invariant with respect to the diagonal $S^1$-action introduced above. 
\end{lem}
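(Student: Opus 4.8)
The plan is to verify the two assertions of the lemma separately: first that the constant family $\mathrm{Id}_{S^5}\otimes\cancel{\partial}_{F^{SO}(\mathbb{CP}^2)}$ is $Spin(4)$-transversally elliptic along the fibres of the projection $S^5\times F^{SO}(\mathbb{CP}^2)\to S^5$, and second that it is invariant under the diagonal $S^1$-action introduced above. Both reduce to transporting properties of $\cancel{\partial}_{F^{SO}(\mathbb{CP}^2)}$, built in Section~\ref{section.fam.proj.Dirac} and in the present section for $\mathbb{CP}^2$ viewed as a $4$-dimensional oriented Riemannian fibration over a point, to the product.

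For transversal ellipticity, I would first identify the relevant bundles. Since $Spin(4)$ acts trivially on the $S^5$-factor and acts on $F^{SO}(\mathbb{CP}^2)$ through $\zeta:Spin(4)\to SO(4)$ and the frame-bundle action, its orbits are vertical for the projection to $S^5$; hence the vertical tangent bundle $T(S^5\times F^{SO}(\mathbb{CP}^2)|S^5)$ is the pull-back of $TF^{SO}(\mathbb{CP}^2)$, and the vertical $Spin(4)$-transversal cotangent bundle $T^*_{Spin(4)}(S^5\times F^{SO}(\mathbb{CP}^2)|S^5)$ is the pull-back of $T^*_{Spin(4)}(F^{SO}(\mathbb{CP}^2))=T^*_{SO(4)}(F^{SO}(\mathbb{CP}^2))$. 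The principal symbol of $\mathrm{Id}_{S^5}\otimes\cancel{\partial}_{F^{SO}(\mathbb{CP}^2)}$ at $(w,\mathscr{E})$ and a vertical covector $\xi$ is $\mathrm{Id}\otimes\sigma(\cancel{\partial}_{F^{SO}(\mathbb{CP}^2)})(\mathscr{E},\xi)=c(\xi)$, the Clifford multiplication considered above; since $\cancel{\partial}_{F^{SO}(\mathbb{CP}^2)}$ is $Spin(4)$-transversally elliptic, $c(\xi)$ is invertible for every non-zero $\xi\in T^*_{SO(4)}(F^{SO}(\mathbb{CP}^2))$. Smoothness of the family is immediate, being a constant family pulled back along a product projection in the sense of \cite{Atiyah-Singer:IV}. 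This gives $Spin(4)$-transversal ellipticity along the fibres.

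For the $S^1$-invariance, I would note that the diagonal action on $S^5\times F^{SO}(\mathbb{CP}^2)$ covers the $S^1$-action on the base $S^5$, and that on the fibre $F^{SO}(\mathbb{CP}^2)$ it acts by the diffeomorphism $\mathscr{E}\mapsto d\gamma\circ\mathscr{E}$ induced by pushforward of frames from the isometry $\gamma\cdot[z_0,z_1,z_2]=[z_0,z_1,\gamma z_2]$ of $\mathbb{CP}^2$. Because this action is isometric, it commutes with the right $SO(4)$-action on frames and preserves the metric, the Levi–Civita connection and the induced connections $\nabla^\pm$ on $\mathbb{S}^\pm(F^{SO}(\mathbb{CP}^2))$, the chirality element, and the Clifford multiplication $c:T^*_{SO(4)}(F^{SO}(\mathbb{CP}^2))\to\Hom(S^+,S^-)$; it therefore intertwines $\cancel{\partial}_{F^{SO}(\mathbb{CP}^2)}$ with itself and commutes with the $Spin(4)$-action, the lift being $SO(4)$-equivariant on the frame bundle. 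Applying this fibrewise and using that $\mathrm{Id}_{S^5}$ is trivially $S^1$-equivariant, the family $\mathrm{Id}_{S^5}\otimes\cancel{\partial}_{F^{SO}(\mathbb{CP}^2)}$ is invariant under the diagonal action (indeed $S^1\times Spin(4)$-equivariant).

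The main difficulty is bookkeeping rather than conceptual: one must check that the isometric $S^1$-action genuinely lifts through the $Spin$-structure on $T_{SO(4)}(F^{SO}(\mathbb{CP}^2))$ used to define $\cancel{\partial}_{F^{SO}(\mathbb{CP}^2)}$, so that all the natural data entering the construction of the Dirac operator are preserved on the nose, not merely up to conjugation by the covering $Spin(4)\to SO(4)$; once this is established, the identification of the symbol and the equivariance statements are routine consequences of the constructions recalled in Section~\ref{section.fam.proj.Dirac} and in the first part of this section.
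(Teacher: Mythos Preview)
Your approach is correct and conceptually parallel to the paper's, but the paper resolves by direct computation the very point you flag as the remaining difficulty. Rather than arguing abstractly that an isometric circle action preserves the natural data and then noting that one must still check the lift to the spin structure, the paper works in the explicit trivialization $\alpha:T_{SO(4)}(F^{SO}(\mathbb{CP}^2))\to F^{SO}(\mathbb{CP}^2)\times\R^4$ given by $\alpha(\mathscr{E},w)=(\mathscr{E},\mathscr{E}^{-1}\circ d_{\mathscr{E}}q(w))$ and computes that $\alpha(e^{i\theta}\cdot(\mathscr{E},w))=(d_{q(\mathscr{E})}e^{i\theta}\circ\mathscr{E},\,\mathscr{E}^{-1}\circ d_{\mathscr{E}}q(w))$: the $S^1$-action is \emph{trivial} on the $\R^4$-factor. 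This immediately forces the induced action on $F^{SO}(\mathbb{CP}^2)\times SO(4)$, on the spin structure $F^{SO}(\mathbb{CP}^2)\times Spin(4)$, and on $F^{SO}(\mathbb{CP}^2)\times S^\pm$ to be through the first factor only, so the lift exists on the nose and the Clifford multiplication satisfies $c(e^{i\theta}\cdot(\mathscr{E},w))=\hat c(\mathscr{E}^{-1}\circ d_{\mathscr{E}}q(w))$ with no twist. Your isometry argument would also work once you observe this triviality, but as written you leave the spin-lift verification as a deferred check; the paper's coordinate computation discharges it in one line and simultaneously gives the $S^1$-invariance of the symbol and of $\nabla^+$. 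The transversal-ellipticity part is handled identically in both: the vertical transverse symbol is Clifford multiplication pulled back from $T_{SO(4)}(F^{SO}(\mathbb{CP}^2))$, invertible off the zero section.
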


\begin{proof}

Indeed, recall the trivialisation $\alpha : T_{SO(2n)} (F^{SO(2n)}(\mathbb{CP}^2) \rightarrow F^{SO(2n)}(\mathbb{CP}^2) \times \R^4$ given by
$\alpha(\mathscr{E},w)=(\mathscr{E}, \mathscr{E}^{-1}\circ d_{\mathscr{E}}q (w))$.
This induces the trivialisation $\tilde{\alpha} : F^{SO}(T_{SO(2n)}F^{SO}(\mathbb{CP}^2)) \rightarrow F^{SO}(\mathbb{CP}^2) \times SO(4)$ given by
$\tilde{\alpha}( \mathscr{E},\mathscr{W})=(\mathscr{E}, \mathscr{E}^{-1}\circ d_{\mathscr{E}}q\circ \mathscr{W})$.
Let us look at the action of $S^1$ through the trivialisations $\alpha$ and $\tilde \alpha$.
The action on $T_{SO(2n)} (F^{SO(2n)}(\mathbb{CP}^2))$ is given by 
$e^{i\theta} \cdot (\mathscr{E}, w) = (d_{q(\mathscr{E})}e^{i\theta} \circ \mathscr{E} , d_{\mathscr{E}}(d_{q(\mathscr{E})}e^{i\theta} ) (w))$.
We compute then
\begin{align*}
\alpha(e^{i\theta} \cdot (\mathscr{E}, w)) &= \alpha(d_{q(\mathscr{E})}e^{i\theta} \circ \mathscr{E} , d_{\mathscr{E}}(d_{q(\mathscr{E})}e^{i\theta} ) (w))\\
&=(d_{q(\mathscr{E})}e^{i\theta} \circ \mathscr{E} , \mathscr{E}^{-1} \circ (d_{q(\mathscr{E})}e^{i\theta})^{-1} 
\circ \big[d_{[d_{q(\mathscr{E})}e^{i\theta}\circ \mathscr{E}]}q \big] \circ d_{\mathscr{E}}(d_{q(\mathscr{E})}e^{i\theta} ) (w))\\
&=(d_{q(\mathscr{E})}e^{i\theta} \circ \mathscr{E} , \mathscr{E}^{-1} \circ (d_{e^{i\theta}q(\mathscr{E})}e^{-i\theta}) 
\circ d_{ \mathscr{E}}(q \circ d_{q(\mathscr{E})}e^{i\theta} ) (w)).
\end{align*}

\noindent
Since $q\circ d_{\mathscr{E}}e^{i\theta}=e^{i\theta} \cdot q$, in other words since $q$ is $S^1$-equivariant, we obtain
\begin{align*}
\alpha(e^{i\theta} \cdot (\mathscr{E}, w)) 
&=(d_{q(\mathscr{E})}e^{i\theta} \circ \mathscr{E} , \mathscr{E}^{-1} \circ (d_{e^{i\theta}q(\mathscr{E})}e^{-i\theta}) 
\circ d_{q(\mathscr{E})}e^{i\theta} \circ d_{ \mathscr{E}}q  (w))\\
&=(d_{q(\mathscr{E})}e^{i\theta} \circ \mathscr{E} , \mathscr{E}^{-1} \circ d_{ \mathscr{E}}q  (w)).
\end{align*} 

\noindent
Therefore, the induced action on $F^{SO}(\mathbb{CP}^2) \times SO(4)$ is given by 
$$e^{i\theta} \cdot (\mathscr{E},\mathscr{W})=(d_{q(\mathscr{E})}e^{i\theta} \circ \mathscr{E} , \mathscr{E}^{-1} \circ d_{\mathscr{E}}q \circ \mathscr{W})$$
and consequently the actions on $F^{SO}(\mathbb{CP}^2) \times Spin(2n)$ and $F^{SO}(\mathbb{CP}^2) \times S^{\pm}$ are given by the same formula, 
that is the action is through the action on the first component $F^{SO}(\mathbb{CP}^2)$.
It follows then also that $\forall (\mathscr{E},w) \in T_{SO(4)}F^{SO}(\mathbb{CP}^2)$, we have
$$c(e^{i\theta} \cdot (\mathscr{E},w))=\hat{c}(\mathscr{E}^{-1} \circ d_{\mathscr{E}}q(w)).$$

\noindent
This shows that the operator is $S^1$-invariant because 
$\nabla^+_{e_j(q(\mathscr{E}))}(s\circ de^{i\theta} )(\mathscr{E})
=\nabla_{d_{q(\mathscr{E})}e^{i\theta} (e_j(q(\mathscr{E}))}(s) (d_{q(\mathscr{E})}e^{i\theta}\circ \mathscr{E})$.

We then get the result because the vertical transverse symbol of $\mathrm{Id}_{S^5} \otimes \cancel{\partial}_{F^{SO}(\mathbb{CP}^2)}$ which is
given $\forall (x,\mathscr{E},w)\in S^5 \times T_{SO(2n)}F^{SO}(\mathbb{CP}^2)$ by 
$$\sigma(\mathrm{Id}_{S^5} \otimes \cancel{\partial}_{F^{SO}(\mathbb{CP}^2)})(\mathscr{E},w)=c(\alpha(\mathscr{E},w))$$
is invertible for non zero $w$.
\end{proof}

By restriction to $S^1$-invariant functions, we obtain a family over $B=S^5 / S^1 =\mathbb{CP}^2$ 
of $Spin(4)$-transversally elliptic operators
$$\cancel{\partial}_{\maF|B}^+ : C^{\infty}(\maF,\mathbb{S}^+) \rightarrow C^\infty(\maF,\mathbb{S}^-). $$
%This in turn defines the family $\cancel{\partial}_{M|B}^+=\pi_*\cancel{\partial}^+_{\maP|B}$ of projective Dirac operators on $M$ as in the previous section. 

Let $\upsilon=\frac{i}{2} \sum_{i=0}^2 Z_kd\bar{Z}_k -\bar{Z}_k dZ_k$ be the standard 
$S^1$-connection on the principal bundle $S^5 \rightarrow \mathbb{CP}^2$
associated with the Fubini-Study metric on $\mathbb{CP}^2$. Here $(Z_0,Z_1,Z_2)\in S^5 \subset \C^3$. 
Denote by $\Upsilon$ the curvature of $\upsilon$. 
Recall that $\Upsilon$ is given in trivialisation corresponding to affine charts 
$U_i=\{Z_i\neq 0\}$ of $\mathbb{CP}^2$ with coordinates $z=(z_1,z_2)$ by
$\Upsilon = \frac{i}{2} \frac{(1+|z|^2)^2\sum dz_j\wedge d\bar{z}_j - \sum \bar{z}_j dz_j \wedge \sum z_k d\bar{z}_k }{(1+|z|^2)^2}$ 
and is therefore a $2$-form with real coefficients.
 
\begin{prop}
%The index of the family $\cancel{\partial}_{M|B}^+$ of projective Dirac operators is given by
Let $\cancel{\partial}_{M|B}^+$ be the 
family of projective Dirac operators corresponding to $\cancel{\partial}_{\maF|B}^+$, 
see Section \ref{section.fam.proj.Dirac} and Remark \ref{rem.Dirac.proj.spin}.
We have
\begin{equation*}
\mathrm{Ind}^{M|B}_a(\cancel{\partial}_{M|B}^+)=\langle \mathrm{Ind}^{\maF|B}_{-\infty}(\cancel{\partial}_{\maF|B}^+), \varphi\rangle =-\frac{1}{8} - \frac{\Upsilon}{2^7\cdot 3^2}\frac{133}{15}
\end{equation*}
for any $\varphi \in C^\infty(Spin(4))^{Ad(Spin(4))}$ equal to $1$ around $\Id$ and $0$ around $-\Id$.
\end{prop}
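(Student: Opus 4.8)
The plan is to feed this example into the index formula for families of projective Dirac operators. By Corollary~\ref{cor.ind.proj.dirac} together with Theorem~\ref{thm.ind.dirac} and Remark~\ref{rem.Dirac.proj.spin}, applied with $n=2$ (since $\dim(M|B)=\dim\mathbb{CP}^2=4$), one has
$$\mathrm{Ind}^{M|B}_a(\cancel{\partial}_{M|B}^+)=\langle \mathrm{Ind}^{\maF|B}_{-\infty}(\cancel{\partial}_{\maF|B}^+),\varphi\rangle=(2\pi i)^{-2}\int_{M|B}\hat{A}(T(M|B))\in\maH^{ev}_{dR}(\mathbb{CP}^2)$$
for any admissible $\varphi$, so the whole problem reduces to evaluating this fibre integral and expressing it through $\Upsilon$.

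Next I would make $T(M|B)$ explicit. As recorded above, $T(M|B)=S^5\times_{S^1}T\mathbb{CP}^2$, the bundle associated with the circle bundle $q:S^5\to\mathbb{CP}^2$ through the differential of the isometric rotation action $\gamma\cdot[z_0,z_1,z_2]=[z_0,z_1,\gamma z_2]$ on the fibre $\mathbb{CP}^2$. Equipping $T\mathbb{CP}^2$ with the Fubini--Study Levi--Civita connection and $S^5\to\mathbb{CP}^2$ with the connection $\upsilon$ of curvature $\Upsilon$, the Chern--Weil formalism of Section~\ref{sec.vert.twist.Chern} (see Definition~\ref{def.eTheta}) identifies the $\hat{A}$-form of $T(M|B)$ for the induced connection with the image, under the substitution $X\mapsto\Upsilon$, of the $S^1$-equivariant $\hat{A}$-form $\hat{A}_{S^1}(T\mathbb{CP}^2)(X)$. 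Pushing forward along the fibre then exhibits $\int_{M|B}\hat{A}(T(M|B))$ as a finite sum $\sum_{k}a_k\,\Upsilon^k$ (finite since $\Upsilon^3=0$ on the $4$-manifold $B$), where $a_k$ is the integral over $\mathbb{CP}^2$ of the degree-$4$, $X^k$-component of $\hat{A}_{S^1}(T\mathbb{CP}^2)(X)$.

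The constant term is immediate: $a_0$ is the integral over $\mathbb{CP}^2$ of the degree-$4$ part of $\hat{A}(T\mathbb{CP}^2)$, which with the curvature normalisation used throughout gives $(2\pi i)^{-2}a_0=\hat{A}[\mathbb{CP}^2]=-\tfrac18$ (using $\hat{A}_1=-p_1/24$ and $\langle p_1(T\mathbb{CP}^2),[\mathbb{CP}^2]\rangle=3$). For the remaining coefficient I would compute the $S^1$-equivariant $\hat{A}$-form of $\mathbb{CP}^2$ explicitly: writing the equivariant Chern roots of $T\mathbb{CP}^2$ as $x_i+c_iX$ with $x_i$ the Fubini--Study Chern-root curvatures and $c_i$ the weights of the rotation action, one has $\hat{A}_{S^1}(T\mathbb{CP}^2)(X)=\prod_i\frac{(x_i+c_iX)/2}{\sinh((x_i+c_iX)/2)}$; then one extracts the relevant form-degree and $X$-degree component, integrates over $\mathbb{CP}^2$ (conveniently by Atiyah--Bott localisation at the fixed locus $\mathbb{CP}^1\sqcup\{\mathrm{pt}\}$ of the rotation action), and combines with the factor $(2\pi i)^{-2}$ and with the curvature $\Upsilon$ written in the affine charts above. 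This should reproduce the stated coefficient $-\tfrac{133}{15\cdot 2^7\cdot 3^2}$. The hard part will be precisely this last step — assembling the equivariant $\hat{A}$-form, isolating the correct component, and carrying the localisation and the $2\pi i$-normalisations through without arithmetic slips; all the conceptual input (the index formula and the Chern--Weil description of $\hat{A}(T(M|B))$) is already available from the results proved earlier.
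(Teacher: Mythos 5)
Your opening move is exactly the paper's: combine Theorem~\ref{thm.ind.dirac} and Remark~\ref{rem.Dirac.proj.spin} with $n=2$ to reduce the Proposition to evaluating $(2\pi i)^{-2}\int_{M|B}\hat{A}(T(M|B))$, and your identification of $T(M|B)=S^5\times_{S^1}T\mathbb{CP}^2$ with the resulting Chern--Weil description (curvature of $T(M|B)$ obtained from the $S^1$-equivariant curvature of $T\mathbb{CP}^2$ by substituting $\Upsilon$ for the equivariant variable) agrees with what the paper does via BGV, Lemma~7.37, writing $F^{T(M|B)}=F+\Upsilon\mu(1)$ with $\mu$ the Riemannian moment of the rotation action. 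Your evaluation of the constant term as $\hat{A}[\mathbb{CP}^2]=-p_1[\mathbb{CP}^2]/24=-1/8$ is also correct and matches the paper's $(2i\pi)^{-2}(-1/48)\int_{\mathbb{CP}^2}\Tr(F^2)=-1/8$. Where you diverge is in the method proposed for the top-degree coefficient: the paper expands $\hat{A}$ in the invariant polynomials $\Tr(F(X)^2)$, $\Tr(F(X)^4)$, $(\Tr F(X)^2)^2$, extracts their $X^2$-components of top form degree, and then evaluates the resulting integrals over $\mathbb{CP}^2$ by brute-force coordinate computation (assisted by Sagemath); you propose Atiyah--Bott localization at the fixed locus of the rotation action. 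That is a genuinely different computational route, and potentially cleaner, though you should note the fixed locus $\mathbb{CP}^1\sqcup\{\mathrm{pt}\}$ is not a finite set of points, so the localization formula will require integrating over $\mathbb{CP}^1$ against the inverse equivariant Euler class of its normal bundle, not just summing weight contributions.

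The gap, however, is that you never carry out this step, and the step \emph{is} the Proposition. What the statement asserts beyond Corollary~\ref{cor.ind.proj.dirac} is precisely the numerical value, in particular the coefficient $-\tfrac{133}{15\cdot 2^7\cdot 3^2}$ multiplying the $\Upsilon$-term; a reader cannot extract that number from what you have written, nor check that your proposed localization would yield it (you yourself flag this as ``the hard part'' left undone). In the paper this is where nearly all of the work lives: computing $\Tr(\mu(1)^2)$, $\Tr((F\mu(1))^2)$, $\Tr(F\mu(1))^2$ and $\Tr(F^2\mu(1)^2)$ explicitly in the affine chart, integrating them over $\mathbb{CP}^2$, and assembling the weighted sum $\frac{4}{5}\cdot\frac{3}{2}+\frac{2}{5}\cdot\frac{5}{3}+\frac{1}{2}\cdot 6+4=\frac{133}{15}$ (up to the stated normalizations). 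So while your framework is sound and your alternative via localization is plausible, the proposal as it stands establishes only the reduction and the $-1/8$ term; the actual content of the Proposition remains unproved.
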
 
 
\begin{proof}
Recall that the $\hat{A}$-genus $\hat{A}(E,\nabla)=\det^{1/2}(\frac{F/2}{\sinh(F/2)})$ of a real vector bundle $E \rightarrow Z$
with connection $\nabla$ and curvature $F$
over a manifold $Z$ belongs to $\maA^{4\bullet}(Z,\R)$. Furthermore, 
it is the form associated with the power series $h(t)=\frac{1}{2}\ln(\frac{t/2}{\sinh(t/2)})$, that is
\begin{equation*}
\hat{A}(E,\nabla)=\exp \Tr\bigg(\frac{1}{2}\ln\big(\frac{F/2}{\sinh(F/2)}\big)\bigg),
\end{equation*}
see \cite[Section 1.5]{BGV}.
Recall that $M=S^5 \times_{S^1} \mathbb{CP}^2$ is $8$-dimensional and 
therefore its $\hat{A}$-genus only has non vanishing terms of degree $0$, $4$ and $8$. 
We then have using the expansion series of $f(t)$  that 
\begin{equation*}\label{eq.exp.A.genus}
\hat{A}(T(M|B))=1-\frac{1}{2^2\cdot 12} \Tr((F^{T(M|B)})^2)+\frac{1}{2^4 \cdot 360}\Tr((F^{T(M|B)})^4) + \frac{1}{2^4\cdot 288} \Tr((F^{T(M|B)})^2)^2,
\end{equation*}
since higher coefficients vanish because their degree is bigger than $\dim M=8$.

\noindent
Using Theorem \ref{thm.ind.dirac}, we get that
\begin{equation*}
\langle\mathrm{Ind}^{\maF|B}_{-\infty}(\cancel{\partial}_{\maF|B}^+), \varphi \rangle =-(2\pi)^{-2}\int_{M|B} \hat{A}(T(M|B)).
\end{equation*}
Now recall that integration along the fibers commutes with the Chern-Weil morphism, 
see \cite[Proposition 7.35]{BGV} for instance. Let then $\nabla^{LC}$ be the Levi-Civita connection on $T\mathbb{CP}^2$ 
and denote by $F$ its curvature.
We now consider the $S^1$-equivariant curvature of $\nabla^{LC}$ given by $F(X)=F+\mu(X)$, 
where $\mu(X)=\maL^{T\mathbb{CP}^2}(X)-\nabla^{LC}_X=-X\nabla^{LC}1_{\mathbb{CP}^2}$ is the moment associated with $X\in \R=Lie(S^1)$,
where the last equality comes from the vanishing of the torsion of $\nabla^{LC}$, see \cite[Example 7.8]{BGV}. 
It is shown in \cite[Lemma 7.37]{BGV} that this data defines a curvature $2$-form $F^{T(M|B)}$ on $T(M|B)$ given by $F^{T(M|B)}=F+\Upsilon\mu(1)$, 
where $1$ is the basis of $Lie(S^1)$.

\noindent
We shall denote by $[f \otimes\alpha]_{\max}=f \otimes [\alpha]_{\max}$ the  component of 
$f\otimes \alpha \in C^\infty(\R)\otimes \maA(\mathbb{CP}^2)$ with maximal degree 
(with respect to the form degree on $\maA(\mathbb{CP}^2)$).  We have
\begin{align*}
[\Tr((F(X))^2)]_{\max}&=[\Tr((F+X \mu(1))^2)]_{\max}=\Tr(F^2)
\end{align*}
and
\begin{align*}
[\Tr(F(X)^4)]_{\max}&=[\Tr((F^2 + X F\mu(1) +X\mu(1) F + X^2\mu(1))^2)]_{\max}\\
&=X^2\Tr(F^2\mu(1)^2)+X^2\Tr\bigg((F\mu(1))^2\bigg)    +X^2\Tr\bigg(F\mu(1)^2F\bigg)   \\
&\qquad   + X^2\Tr\bigg((\mu(1)F)^2\bigg)  +X^2\Tr\bigg(\mu(1)F^2\mu(1)\bigg)    +X^2\Tr(\mu(1)^2F^2)\\
&=X^2\bigg(4\Tr(F^2\mu(1)^2)+2\Tr((F\mu(1))^2)\bigg)
\end{align*}
and 
\begin{align*}
[\Tr(F(X)^2)^2]_{\max}=X^2\bigg(2\Tr(F^2)\Tr(\mu(1)^2) + 4\Tr(F\mu(1))^2\bigg).
\end{align*}
We did the computations with the help of the free software Sagemath in the coordinate chart $U_0=\{z_0\neq 0\}$ with coordinates $(z_1,z_2)\in \C^2$. 
In the sequel $|z|^2=|z_1|^2+|z_2|^2$. 
We obtained that the square of the curvature $F$ is given by the diagonal matrix
\begin{equation*}
F^2=\mathrm{diag}\bigg(\frac{3}{(1+|z|^2)^3}dz_1 \wedge d\bar{z_1} \wedge dz_2 \wedge d\bar{z}_2\bigg),
\end{equation*}
and therefore $\Tr(F^2)=\frac{12}{(1+|z|^2)^3}dz_1 \wedge d\bar{z_1} \wedge dz_2 \wedge d\bar{z}_2$.
We can then compute 
\begin{align*}
\frac{(2i\pi)^{-2}}{-48}\int_{\mathbb{CP}^2} \Tr(F^2) &= \int_{U_0}\frac{12}{(1+|z|^2)^3}dz_1 \wedge d\bar{z_1} \wedge dz_2 \wedge d\bar{z}_2\\
&=\frac{1}{48}\int_{\R^4} -4 \frac{12}{(1+|z|^2)^3} dx_1dy_1dx_2dy_2.
\end{align*}
We chose to compute this integral using polar coordinates for $(x_i,y_i)=r_i(\cos(\theta_i),\sin(\theta_i))$ 
and then again polar coordinates with respect to $(r_1,r_2)$
because the other integrals seem to be easier to compute using this choice.
We get 
\begin{align*}
\frac{(2i\pi)^{-2}}{-48}\int_{\mathbb{CP}^2} \Tr(F^2)&=-\int_{\mathbb{R}_+^2} \frac{r_1r_2}{(1+r_1^2+r_2^2)^3}dr_1dr_2\\ 
&=-\int_{\R_+}\int_{0}^{\pi/2} \frac{r^3\cos(\theta)\sin(\theta)}{(1+r^2)^3} d\theta dr\\
&=-\bigg[\frac{\sin(\theta)^2}{2}\bigg]_0^{\pi/2}\bigg[\frac{r^4}{4(1+r^2)^2}\bigg]_0^\infty\\
&=-\frac{1}{8}.
\end{align*}

\noindent
Since $F^2$ is diagonal, we obtain that  
$$\Tr(F^2\mu(1)^2)=\frac{3\Tr(\mu(1)^2)}{(1+|z|^2)^3}dz_1 \wedge d\bar{z_1} \wedge dz_2 \wedge d\bar{z}_2.$$
We have
$$\Tr(\mu(1)^2)=-2 \frac{|z_1|^4+2|z_2|^4-2|z_1|^2|z_2|^2-2|z_2|^2+2|z_1|^2+1}{(1+|z|^2)^2}.$$
We can then compute as we did before
\begin{align*}
\int_{\mathbb{CP}^2} \Tr(F^2\mu(1)^2) &=\int_{U_0} \frac{3\Tr(\mu(1)^2)}{(1+|z|^2)^3}dz_1 \wedge d\bar{z_1} \wedge dz_2 \wedge d\bar{z}_2\\
&=(2\pi)^2\frac{3}{2}\\
&=\frac{1}{4}\int_{\mathbb{CP}^2} \Tr(F^2)\Tr(\mu(1)^2).
\end{align*}
We have
\begin{equation*}
\Tr(F\mu(1))^2=-8 \frac{2|z_1|^4+9|z_2|^4-9(|z_1|^2+1)|z_2|^2+4|z_1|^2+2}{(1+|z|^2)^5}dz_1\wedge d\bar{z}_1\wedge dz_2 \wedge d\bar{z}_2.
\end{equation*}
Therefore, we get
\begin{equation*}
\int_{\mathbb{CP}^2} \Tr(F\mu(1))^2=4(2\pi)^2.
\end{equation*}
For the last needed term, we obtain
\begin{equation*}
\Tr\big((F\mu(1))^2\big)=-\frac{8|z_1|^4+9|z_2|^4-13|z_1|^2|z_2|^2-13|z_2|^2+16|z_1|^2+8}{(1+|z|^2)^5}
\end{equation*}
and therefore
\begin{equation*}
\int_{\mathbb{CP}^2} \Tr\big((F\mu(1))^2\big)=\frac{5(2\pi)^2}{3}.
\end{equation*}

\noindent
Putting everything together, we finally get
\begin{align*}
\mathrm{Ind}^{M|B}_a(\cancel{\partial}_{M|B}^+ )
&= -\frac{1}{8} + \frac{(2i\pi)^{-2}\Upsilon^2}{2^4\cdot 2^3\cdot 3^2} \Bigg[ \frac{4}{5}\int_{\mathbb{CP}^2} \Tr(F^2\mu(1)^2) 
		+ \frac{2}{5} \int_{\mathbb{CP}^2} \Tr\big((F\mu(1))^2\big) \\ 
& \hspace*{6cm}+\frac{1}{2} \int_{\mathbb{CP}^2} \Tr(F^2)\Tr(\mu(1)^2) 
						+\int_{\mathbb{CP}^2}\bigg(\Tr(F\mu(1))\bigg)^2 \Bigg] \\
&=-\frac{1}{8} - \frac{\Upsilon}{2^4\cdot 2^3\cdot 3^2} \Bigg[\frac{6}{5} + \frac{10}{15} + 3 + 4 \Bigg]\\
&=-\frac{1}{8} - \frac{\Upsilon}{2^7\cdot 3^2}\frac{133}{15}.
\end{align*}
\end{proof} 
 
\medskip

\noindent
{\bf{Acknowledgements.}} I would like to express my gratitude to M.-T. Benameur and V. Nistor for useful discussions,
suggestions and encouragement during the redaction of this paper.
I would also like to thank R. Côme, P. Carrillo-Rouse, M. Lesch, H. Oyono-Oyono,  
P.-E. Paradan, M. Puschnigg, Y. Sanchez Sanchez and E. Schrohe 
for many helpful discussions.

\footnotesize
\bibliographystyle{plain}
\bibliography{Transversalement_elliptique}
%\nocite{*}

\end{document}